\theoremstyle{plain}
\newtheorem{theorem}{Theorem}[section]
\newtheorem{lemma}[theorem]{Lemma}
\newtheorem{proposition}[theorem]{Proposition}
\newtheorem{fact}{Fact}
\newtheorem{cor}{Corollary}
\newtheorem*{theorem*}{Theorem}
\newtheorem*{appl*}{Application}
\theoremstyle{definition}
\newtheorem{definition}{Definition}[section]
\theoremstyle{remark}
\newtheorem*{remark}{Remark}
\newcommand\dr{\mathbf}
\renewcommand\d{\textrm}
\renewcommand\phi{\varphi}
\newcommand\st{\textrm{ such that }}
\newcommand\Q{{\mathbf Q}}
\newcommand\bk{{\mathbf k}}
\newcommand\T{{\mathcal T}}
\newcommand\D{{\mathbf D}}
\newcommand\R{{\mathbf R}}
\newcommand\Z{{\mathbf Z}}
\DeclareMathOperator\Stab{Stab}
\DeclareMathOperator\Card{Card}
\renewcommand\P{{\mathbf P}}
\renewcommand\S{{\mathbf S}}
\DeclareMathOperator\PSL{PSL}
\DeclareMathOperator\GL{GL}
\DeclareMathOperator\SL{SL}
\DeclareMathOperator\SO{SO}
\DeclareMathOperator\PO{PO}
\DeclareMathOperator\PGL{PGL}
\DeclareMathOperator\Ad{Ad}
\DeclareMathOperator\Hull{Hull}
\DeclareMathOperator\Isom{Isom}
\renewcommand\H{\mathcal {H}}
\newcommand\C{\mathcal {C}}
\title[Yet another $p$-adic hyperbolic disc]{Yet another $p$-adic hyperbolic
disc : Hilbert distance for $p$-adic fields}
\author{Antonin Guilloux}
\address{IMJ, Paris 6, 4 Place Jussieu, 75005 Paris. aguillou@math.jussieu.fr}
\begin{document}

\maketitle


We describe in this paper a geometric construction in the projective $p$-adic
plane that gives, together with a suitable notion of $p$-adic convexity, some
open subsets of $\mathbf P_2(\Q_p)$ naturally endowed with a ``Hilbert''
distance and a transitive action of $\PGL(2,\dr Q_p)$ by isometries. These open
sets are natural analogues of the hyperbolic disc, more precisely of Klein's
projective model. But, unlike the real case, there is not only one such
hyperbolic disc. Indeed, we find three of them if $p$ is odd (and seven if
$p=2$).

Let us stress out that neither the usual notion of convexity nor that of
connectedness as known for the real case are meaningful in the $p$-adic case.
Thus, there will be a rephrasing game for the definitions of real convexity
until we reach a formulation suitable  for other local fields. It will lead us
to a definition of $p$-adic convexity by duality. Although we will not recover
the beautiful behaviour of real convexity, we will still be able to define
the most important tool for our goals, namely the Hilbert distance. 

We construct our analogues of the hyperbolic disc (once again, via the projective model of the hyperbolic plane) in a quite geometric, even naive, way. Our construction gives $2$-dimensional objects over $\Q_p$. It is very different, in spirit and in facts, of Drinfeld $p$-adic hyperbolic plane \cite{drinfeld}. The possible relations between the two objects remain still unexplored. Another object often viewed as an analogue of the hyperbolic disc is the tree of $\PGL(2,\Q_p)$ \cite{serre}. We explore the relations between our discs and this tree, constructing a natural quasi-isometric projection from the discs to the tree. Eventually we explore the transformation groups of our discs. And, whereas the transformation group of the tree is huge, we prove that only $\PGL(2,\Q_p)$ acts on the discs preserving the convex structure.


\tableofcontents

\section{Looking for a two-sheeted hyperboloid}\label{sec:hyperboloid}

In this section we follow the usual construction of the hyperboloid model for the hyperbolic space but over a local field $\bk$. The point is to analyse the properties of squares in $\bk$. We are mainly interested in the action of $\PGL(2,\bk)$ on $\bk^3$ via the adjoint representation: 
$$ \Ad \: : \: \PGL(2,\bk) \to \SL(3,\bk). $$
It is an isomorphism with the group $\SO(Q)$, where $Q$ denotes the quadratic form on $\bk^3$ given by $Q(x,y,z)=xz-y^2$. 
This section describes the level surfaces of $Q$ in $\bk^3\setminus\{0\}$.
Each of them is a single $\PGL(2,k)$-orbit (by Witt's theorem). We look at
their decomposition into $\PSL(2,k)$-orbits, like the two-sheeted hyperboloid
in the real case. These level sets can be of one of the three following types:
\begin{itemize}
\item  the isotropic cone, which is a finite union of $\PSL(2,\bk)$-orbits,
\item  a single $\PSL(2,\bk)$-orbit (a one-sheeted hyperboloid),
\item  the union of two $\PSL(2,\bk)$-orbits (a two-sheeted hyperboloid). 
\end{itemize}
The latter case is the most interesting for our concerns. A hyperbolic disc will
be, in some sense, a positive cone on sheets of these hyperboloids (see section
\ref{ssec:duality}). We will achieve its construction in a fully elementary way.
But, due to the lack of connectedness argument, some proofs rely on direct
algebraic computations ; we will postpone it to an annex. Keeping in
mind the real counterpart of these results should guide the intuition. We begin
by recalling some general facts about orthogonal groups and gradually focus on
the orthogonal group $\SO(Q)$ described above.

\subsection{Special orthogonal groups and level sets}

Consider $\bk$ a field of characteristic different from $2$, an integer $n\geq
1$, a $n+1$-dimensional $\bk$-vector space  $V$ and a quadratic form $q$ on $V$.
Then Witt's theorem \cite[42.F]{o'meara} implies that the special orthogonal
group $\SO(q)$ acts transitively on each level set of $q$ in $V\setminus \{0\}$.

Consider the form $Q(x_0,\ldots,x_n)=x_0x_n - x_1^2-\ldots -x_{n-1}^2$ on
$V=\bk^{n+1}$. The isotropic cone $\mathcal C$ of $Q$, i.e. the set of vectors
$v$ with $Q(v)=0$, decomposes into two orbits under the action of $SO(Q)$: the
singleton $\{0\}$ and its complement.

Let us now assume moreover that $\bk$ has the following property:
$x_1^2+\ldots +x_{n-1}^2$ is a non-zero  square for any non-zero
vector $(x_1,\ldots,x_{n-1})\in\bk^{n-1}\setminus\{0\}$. This holds for any $n$
if $\bk=\R$ and for any field $\bk$ if $n=2$. We prove then that the isotropic
cone contains a positive semi-cone defined by the fact that $x_0$ is a square in
$\bk$ ("positive" is an analogy with the real case in which the squares are the
positive numbers). This semi-cone is stabilized by an explicit normal subgroup
of $\SO(Q)$ which is of finite index for any local field $\bk$. The real case
tells us a useful interpretation for this finite index subgroup: it becomes the
connected component $\SO^o(Q)$ - in the case $\bk=\R$, the quadratic form $Q$
has signature $(1,n)$. Let $(\bk^*)^2$ be the set of squares (invertible)
elements in $\bk^*$ and  $\bar \alpha=\alpha (\bk^*)^2$ the class modulo the
squares of an element $\alpha \in \bk^*$. We get the following proposition:

\begin{proposition}\label{prop:isotropic}
Consider the form $Q$ over a field $\bk$ (char$(\bk)\neq 2$) where $x_1^2+\cdots
+x_{n-1}^2$ is a non-zero square for any non-zero
vector $(x_1,\ldots,x_{n-1})\in\bk^{n-1}\setminus\{0\}$. For any class $\bar
\alpha$ in
$\bk^*/(\bk^*)^2$, define the semi-cones:
$$\mathcal C_{\bar \alpha}=\{(x_0,\ldots,x_n)\in \mathcal C\setminus\{0\}
\st
x_0\textrm{ and }x_n\textrm{ belong to }\bar\alpha\cup \{0\}\}.$$

Then $\mathcal C\setminus\{0\}$ decomposes into the disjoint union of the
semi-cones over the elements of $\bk^*/(\bk^*)^2$. Moreover $\SO(Q)$ acts by
permutations on the set of semi-cones and we have an isomorphism
$$SO(Q)/\Stab(\mathcal C_{\bar 1}) \simeq \bk^*/(\bk^*)^2.$$
\end{proposition}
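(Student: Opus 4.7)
The plan proceeds in three layers: establish that the semi-cones genuinely partition $\mathcal C \setminus \{0\}$, construct the controlling homomorphism $\theta : \SO(Q) \to \bk^*/(\bk^*)^2$, and deduce the announced isomorphism together with surjectivity.

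For the partition, I fix $v = (x_0, \ldots, x_n) \in \mathcal C \setminus \{0\}$ and read the defining relation $x_0 x_n = x_1^2 + \cdots + x_{n-1}^2$ through the standing hypothesis: the right-hand side is either $0$ or an element of $(\bk^*)^2$. A short case analysis on the vanishing of $x_0$ and $x_n$ then forces $v$ into exactly one $\mathcal C_{\bar\alpha}$. When both $x_0, x_n \neq 0$ the product is a nonzero square, so $\bar{x_0} = \bar{x_n}$; when only one vanishes, say $x_0 = 0$, the hypothesis forces $x_1 = \cdots = x_{n-1} = 0$ and the nonvanishing $x_n$ alone determines the class; the case $x_0 = x_n = 0$ is ruled out by $v \neq 0$. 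This both places each vector in some $\mathcal C_{\bar\alpha}$ and guarantees disjointness.

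For the action, I would exploit the Witt decomposition $V = \bk e_0 \oplus V_0 \oplus \bk e_n$ with $V_0 = \langle e_1, \ldots, e_{n-1}\rangle$ anisotropic, and check three families of explicit elements of $\SO(Q)$ whose effect on semi-cones is transparent. First, the dilations $g_\lambda : (x_0, x_1, \ldots, x_{n-1}, x_n) \mapsto (\lambda x_0, x_1, \ldots, x_{n-1}, \lambda^{-1} x_n)$ send $\mathcal C_{\bar\alpha}$ to $\mathcal C_{\overline{\lambda\alpha}}$ (using $\overline{\lambda^{-1}} = \bar\lambda$). Second, orthogonal transformations of the anisotropic block $V_0$ fix both hyperbolic coordinates and hence preserve every $\mathcal C_{\bar\alpha}$. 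Third, the unipotent transvections fixing $e_0$ or $e_n$ preserve the nonvanishing hyperbolic coordinate, and a direct computation shows they preserve each semi-cone as well. Since these three families generate $\SO(Q)$ (by a rank-one Bruhat decomposition, or by Cartan--Dieudonn\'e applied to carefully chosen reflection pairs), the assignment $\theta(g) = \bar\alpha$ defined by $g\cdot\mathcal C_{\bar 1} = \mathcal C_{\bar\alpha}$ is a well-defined homomorphism with kernel $\Stab(\mathcal C_{\bar 1})$; the dilations themselves exhibit surjectivity, yielding $\SO(Q)/\Stab(\mathcal C_{\bar 1}) \simeq \bk^*/(\bk^*)^2$.

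The main obstacle is precisely the well-definedness of $\theta$. Without a connectedness argument one must verify that no element of $\SO(Q)$ shuffles the semi-cones in a manner not encoded by the three generator families above -- equivalently, that these families genuinely exhaust $\SO(Q)$. This is in essence a spinor-norm computation tailored to our specific form $Q$, and it matches the sort of algebraic verification the author warns will be postponed to the annex; everything else in the argument reduces to straightforward coordinate arithmetic.
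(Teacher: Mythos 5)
Your proof is correct and follows essentially the same route as the paper: the partition comes from reading the relation $x_0x_n=Q'(x_1,\ldots,x_{n-1})$ against the square-values hypothesis, and the isomorphism comes from splitting $\SO(Q)$ into a diagonal family that visibly permutes the semi-cones transitively and unipotent/anisotropic families that fix each semi-cone. The one claim you leave unproved --- that your three families generate $\SO(Q)$ --- is precisely the content of the paper's Fact~\ref{fact:Iwazawa} (stated there as the product decomposition $\SO(Q)=N^-HN^+$ and verified by direct linear algebra in the annex), so you have correctly identified the step requiring the postponed algebraic work; note that mere generation, which is all your homomorphism $\theta$ needs, is in fact a slightly weaker and more robust statement than the literal product decomposition.
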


Before proving the proposition, let us describe an avatar of Iwazawa
decomposition of the group $\SO(Q)$. Let $Q'$ be the quadratic form
$x_1^2+\ldots +x_{n-1}^2$. Consider the three following subgroups of
$\SL(n+1,\bk)$:
\begin{itemize}
 \item $N^+=\begin{pmatrix} 1 & 2 {}^t w A & Q'(w) \\ 0 & A & w \\ 0 & 0 & 1
\end{pmatrix}$ for $A \in \SO(Q')$ and $w\in \bk^{n-1}$.
 \item $N^-=\begin{pmatrix} 1 & 0 & 0 \\ v & B & 0 \\ Q'(v) &2{}^t v B & 1
\end{pmatrix}$ for $B \in \SO(Q')$ and $v\in \bk^{n-1}$.
 \item $H=\begin{pmatrix} x & 0 & 0 \\ 0 & Id & 0 \\ 0&0&\frac{1}{x}
\end{pmatrix}$ ($x\in\bk^*$).
\end{itemize}
In the real case, the three following facts may be proven using geometric considerations. But elementary linear algebra leads to the same conclusion and works on any field.
\begin{fact}\label{fact:Iwazawa}
\begin{itemize}
\item All three are subgroups of $\SO(Q)$ and $H$ normalizes both $N^+$ and
$N^-$.
\item The subgroup $N^+$ is the stabilizer of $v_0=\begin{pmatrix} 1\\ 0\\
\vdots \\ 0\end{pmatrix}$ in $\SO(Q)$.
\item The group $\SO(Q)$ decomposes as the product $N^- H N^+$.
\end{itemize}
\end{fact}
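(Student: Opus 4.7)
The plan is to attack all three bullets by direct matrix computation, as the paper warns. First I would check, separately for $N^+$, $N^-$, and $H$, that a generic element of each family has determinant $1$ and preserves $Q$. For $N^+$, applying the matrix to a column $(x_0, y, x_n)^t$ (with $y \in \bk^{n-1}$) gives an image whose $Q$-value reduces, after expanding $Q'(Ay + x_n w)$ and using $A \in \SO(Q')$, to $x_0 x_n - Q'(y)$; the cross terms $2 x_n w^t A y$ and $-2 x_n (Ay)^t w$ cancel. The argument for $N^-$ is symmetric, and for $H$ the check is immediate. Normalization of $N^\pm$ by $H$ is a one-line conjugation: $h n^+ h^{-1}$ turns out to have the same shape with $w$ replaced by $xw$, and analogously for $N^-$.

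For the stabilizer claim, the containment $N^+ \subset \Stab(v_0)$ is read off the first column. For the reverse, I would write an arbitrary $g \in \SO(Q)$ fixing $v_0$ in $(1, n-1, 1)$-block form
\[
g = \begin{pmatrix} 1 & a^t & b \\ 0 & M & u \\ 0 & c^t & d \end{pmatrix}
\]
and successively exploit orthogonality: the identities $B(e_1, g e_i) = B(e_1, e_i)$, where $B$ denotes the bilinear form associated to $Q$, force the last row of $g$ to be $(0, \ldots, 0, 1)$; the conditions $Q(g e_i) = -1$ and $B(g e_i, g e_j) = -2 \delta_{ij}$ on the middle block force $M^t M = I$, so $M \in O(Q')$, and $\det g = 1$ upgrades this to $\SO(Q')$; finally, $Q(g e_{n+1}) = 0$ and $B(g e_i, g e_{n+1}) = 0$ pin down $b = Q'(u)$ and $a = 2 M^t u$, giving $g \in N^+$ with $A = M$, $w = u$.

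For the $N^- H N^+$ decomposition, the strategy is to use the stabilizer statement: if $g \in \SO(Q)$ and one can find $\eta \in N^- H$ with $\eta v_0 = g v_0$, then $\eta^{-1} g \in \Stab(v_0) = N^+$ and we are done. A short calculation shows that $N^- H$ applied to $v_0$ produces exactly the vectors $(x, xv, x Q'(v))^t$ for $x \in \bk^*$ and $v \in \bk^{n-1}$, so when $g v_0 = (x_0, y, x_n)^t$ has $x_0 \neq 0$, I set $x = x_0$, $v = y/x_0$; the last coordinate then matches automatically because $Q(gv_0) = 0$ gives $x_0 x_n = Q'(y)$.

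The main obstacle I foresee is the complementary case $x_0 = 0$: here $Q(gv_0) = 0$ becomes $Q'(y) = 0$, which under the anisotropy hypothesis in force in Proposition~\ref{prop:isotropic} implies $y = 0$ and then $x_n \neq 0$, so $g v_0$ is a nonzero multiple of the last basis vector. Such images do not occur in $N^- H \cdot v_0$, so a literal reading of the statement requires either an additional Weyl-type involution exchanging the first and last basis vectors and a corresponding enlargement of the decomposition, or an interpretation as a big-cell statement covering only a Zariski-open piece of $\SO(Q)$ — which is what the subsequent proof of Proposition~\ref{prop:isotropic} actually uses.
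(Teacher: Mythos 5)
Your handling of the first two bullets is correct and is essentially the paper's own argument: the annex likewise verifies $N^+=\Stab(v_0)$ by putting an arbitrary stabilizing element in block-triangular form and extracting, from ${}^tMPM=P$, the conditions $x=1$, ${}^tAA=\mathrm{Id}$, $v=2\,{}^tAw$, $y=Q'(w)$ --- the same chain as your orthogonality identities --- and it observes, as you implicitly do, that the subgroup property of $N^{\pm}$ comes for free from the stabilizer description.

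The obstacle you flag in the third bullet is genuine, and the paper does not resolve it: the annex proof builds $h=\mathrm{diag}(x_0,\mathrm{Id},1/x_0)$ out of the first coordinate $x_0$ of $g(v_0)$ and thus tacitly assumes $x_0\neq 0$. As you note, $N^-H\cdot v_0$ is exactly the set of isotropic vectors with nonvanishing first coordinate, so $g\in N^-HN^+$ if and only if $g(v_0)$ has nonzero first coordinate. This excludes actual elements of $\SO(Q)$: for $n=2$ the matrix $w=\Ad\begin{pmatrix}0&1\\-1&0\end{pmatrix}=\begin{pmatrix}0&0&1\\0&-1&0\\1&0&0\end{pmatrix}$ preserves $Q$, has determinant $1$, and sends $v_0$ to $(0,0,1)$, hence lies outside $N^-HN^+$. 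So the third bullet is false as literally stated; the correct statement is the Bruhat-type decomposition $\SO(Q)=N^-HN^+\cup\, wN^-HN^+$, or the big-cell reading you propose. One caveat on your last sentence: the proof of Proposition \ref{prop:isotropic} as written invokes the full equality $\SO(Q)=HN^-N^+$, not merely the big cell, so it inherits the gap; it survives only after the extra (easy) observation that $w$ exchanges the coordinates $x_0$ and $x_n$ and therefore stabilizes every semi-cone $\C_{\bar\beta}$, just as $N^+$ and $N^-$ do, whence $\SO(Q)=H\,\Stab(\C_{\bar 1})$ still holds. With that Weyl-type correction made explicit, your argument is complete and in fact more careful than the paper's.
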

With this fact, we are ready to proceed with the proof of the proposition.

\begin{proof}
First of all, we may remark that any non-zero isotropic element
$v=(x_0,\ldots,x_n)$ belongs to one of the  semi-cones. Indeed, $v$ being
isotropic, we have the equation $$x_0x_n =Q'(x_1,\ldots,x_{n-1}).$$
We assumed that $Q'$ takes only square values, hence $x_0x_n$ is either zero if
$(x_1,\ldots,x_n)=0$ or a non-zero square. In the first case, as $v\neq 0$, we
get that $x_0\neq 0$ or $x_n\neq 0$. In the second case, the class $\bar x_0$
and $\bar x_n$ are the same, as $x_0x_n\in (\bk^*)^2$. In any case, there is a
unique class $\bar \alpha$ modulo square such that $\bar\alpha \cup\{0\}$
contains both $x_0$ and $x_n$. This proves the first part of the proposition.

To prove the second point, let us remark that both $N^+$ and $N^-$ stabilize
each semi-cone. Let us justify this for $N^+$ by considering an arbitrary
element $$n=\begin{pmatrix} 1 & 2 {}^t w A & Q'(w) \\ 0 & A & w \\ 0 & 0 & 1
\end{pmatrix}.$$ Consider an element $v=\begin{pmatrix} x_0 \\ \vdots \\
x_n\end{pmatrix}$ in $\mathcal C\setminus \{0\}$. If $x_0\neq 0$ then $v$
belongs to $\mathcal C_{\overline {x_{0}}}$. And $n(v)$ has the same first
coordinate as $v$. So it also belong to $\mathcal C_{\overline {x_{0}}}$.
Otherwise, $x_0$ vanishes and so do all the $x_1,\ldots,x_{n-1}$ (as $Q(v)=0$).
In this case, $v$ belongs to $\mathcal C_{\overline {x_{n}}}$, and $n(v)$ is the
vector: $$n(v)=\begin{pmatrix} Q'(w)x_n\\ x_n w\\ x_n\end{pmatrix}.$$ It still
belongs to $\mathcal C_{\overline {x_{n}}}$.

The isomorphism $\SO(Q)/\Stab(\mathcal C_{\bar 1}) \simeq \bk^*/(\bk^*)^2$ is
now easily obtained. Indeed, using the previous fact, we may write:
$$\SO(Q)/\Stab(\mathcal C_{\bar 1})=H N^- N^+/\Stab(\mathcal C_{\bar 1}).$$ The
product $N^-N^+$ is contained in $\Stab(\mathcal C_{\bar 1})$. Hence we have a
first isomorphism:
$$\SO(Q)/\Stab(\mathcal C_{\bar 1})\simeq H/\Stab_H(\mathcal C_{\bar 1}).$$
And the stabilizer in $H$ of the semi-cone $\mathcal C_{\bar 1}$ is clearly the
subgroup: $$ \begin{pmatrix} x & 0 & 0 \\ 0 & \mathrm{Id} & 0 \\ 0&0&\frac{1}{x}
\end{pmatrix} \textrm{ for }x\in(\bk^*)^2.$$
Hence the quotient $H/\Stab_H(\mathcal C_{\bar 1})$ is isomorphic to
$\bk^*/(\bk^*)^2$
\end{proof}

\subsection{The groups $\Ad(\PSL(2,\bk))$ and $\SO(Q)$}

We focus now our attention on the case $n=2$. We note $Q(x,y,z)=xz-y^2$. In this
case, the adjoint representation is an isomorphism between $\PGL(2,\bk)$ and
$\SO(Q)$. 
The determinant modulo squares gives an isomorphism:
$$\PGL(2,\bk)/\PSL(2,\bk) \xrightarrow{\sim} \bk^*/(\bk^*)^2.$$
Under the adjoint representation, this isomorphism is exactly the same as the
one of the proposition \ref{prop:isotropic}. It translates into an isomorphism
between the quotient $\SO(Q)/\Ad(\PSL(2,\bk))$ and $\bk^*/(\bk^*)^2$. For each
class $\bar\alpha \in \bk^*/(\bk^*)^2$ represented by some $\alpha \in \bk^*$,
the following diagonal matrix belongs to the corresponding class in
$\SO(Q)/\Ad(\PSL(2,\bk))$:
$$d_\alpha = \begin{pmatrix} \alpha &  & \\& 1&\\&& \alpha^{-1}\end{pmatrix}.$$
We denote by $d_{\bar\alpha}$ the class $d_\alpha\Ad(\PSL(2,\bk))$ in
$\SO(Q)/\Ad(\PSL(2,\bk))$.
The group $\Ad(\PSL(2,\bk))$ thus identifies with the stabilizer of the semi-cones.

One may describe more precisely the case of $\bk$ a non-archimedean local field
of characteristic $\neq 2$. Recall that the group $\bk^*/(\bk^*)^2$ is of order
$4$ and isomorphic to $(\Z/2\Z)^2$ if the characteristic $p$ of the residual
field is odd. So there are $4$ semi-cones in general. For characteristic $0$ and
residual characteristic $2$, the situation is more complicated \cite{o'meara}.
Consider the case $\Q_2$: then there are $8$ classes modulo squares, and the
group is isomorphic to $(\Z/2\Z)^3$. So we have $8$ semi-cones for $\Q_2$.

We have decomposed the isotropic cone into semi-cones. We may now look at the hyperboloids, i.e. the decomposition of the other level sets of $Q$ under the action of the subgroup $\Stab(\mathcal C_{\bar 1})$. Shall we recover the hyperboloids of one or two sheets? Recall that we are looking for a model of the hyperbolic disc. In the real case, the first step is to see the two-sheeted hyperboloids. From now on, the field $\bk$ is a non-archimedean local field of characteristic different from $2$.

\subsection{Hyperboloids of one or two sheets}

Throughout this section, $\bk$ is a non-archimedean local field of
characteristic different from $2$.

The homotheties of $\bk^3$ change the value of $Q$ by a square. So, up to 
homotheties, there are $\Card(\bk^*/(\bk^*)^2)$ level surfaces for $Q$
different from the isotropic cone. Let $\bar\alpha$ be a class in
$\bk^*/(\bk^*)^2$ and $\alpha$ an element of $\bar\alpha$. Define
$$v_\alpha=\begin{pmatrix}\alpha\\ 0 \\1\end{pmatrix}.$$
Then we have $Q(v_\alpha)=\alpha$. We want to understand the stabilizer in
$\SO(Q)$ of $v_\alpha$, in order to decompose the hyperboloid $\SO(Q).v_\alpha$
into sheets. The situation is as follow:
\begin{proposition}
Let $\bar\alpha$ be a class in $\bk^*/(\bk^*)^2$ and $\alpha$ an element of
$\bar\alpha$. 
\begin{enumerate}
\item If $-1$ belongs to $\bar\alpha$, then $\SO(Q).v_\alpha$ is a one-sheeted
hyperboloid, i.e. $$\SO(Q).v_\alpha=\Ad(\PSL(2,\bk)).v_\alpha;$$
\item else $\SO(Q).v_\alpha$ is a two-sheeted hyperboloid, i.e. $\SO(Q).v_\alpha$ decomposes in two distinct $\PSL(2,\bk)$-orbits.
\end{enumerate}
\end{proposition}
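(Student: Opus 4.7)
The plan is to decompose $\SO(Q)\cdot v_\alpha$ into $\Ad(\PSL(2,\bk))$-orbits by computing the image of the point stabilizer $S:=\Stab_{\SO(Q)}(v_\alpha)$ under the projection $\pi\: :\: \SO(Q)\to \SO(Q)/\Ad(\PSL(2,\bk))\simeq\bk^*/(\bk^*)^2$. Since $\Ad(\PSL(2,\bk))$ is normal in $\SO(Q)$, a standard double-coset argument shows that the number of $\Ad(\PSL(2,\bk))$-orbits on $\SO(Q)\cdot v_\alpha$ is precisely the index of $\pi(S)$ in $\bk^*/(\bk^*)^2$. So it suffices to prove that this image is all of $\bk^*/(\bk^*)^2$ exactly when $-1\in\bar\alpha$, and has index $2$ otherwise.

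To identify $S$, I would use the isomorphism $\Ad\: :\: \PGL(2,\bk)\xrightarrow{\sim}\SO(Q)$ together with the identification of $\bk^3$ with the space of traceless $2\times 2$ matrices via
$$(x,y,z)\longleftrightarrow \begin{pmatrix} y & -x \\ z & -y\end{pmatrix},$$
under which $Q$ becomes the determinant and $\PGL(2,\bk)$ acts by conjugation. The vector $v_\alpha$ then corresponds to $M_\alpha=\begin{pmatrix}0 & -\alpha\\ 1 & 0\end{pmatrix}$, whose characteristic polynomial is $X^2+\alpha$, and $S$ matches the centralizer $Z_{\PGL(2,\bk)}(M_\alpha)$; moreover, by the identification of the quotients recalled in the excerpt, $\pi$ reduces to $\det$ modulo $(\bk^*)^2$.

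The argument then splits according to the structure of the $\bk$-algebra $\bk[M_\alpha]$. If $-\alpha\in(\bk^*)^2$, i.e. $-1\in\bar\alpha$, then $X^2+\alpha$ splits, $M_\alpha$ is diagonalizable over $\bk$, and its centralizer in $\GL(2,\bk)$ is a split torus $\{\mathrm{diag}(\lambda,\mu)\}$ on which $\det(\lambda,\mu)=\lambda\mu$ surjects onto $\bk^*$, hence onto $\bk^*/(\bk^*)^2$. So $\SO(Q)\cdot v_\alpha$ is a single $\Ad(\PSL(2,\bk))$-orbit. Otherwise, $L:=\bk[M_\alpha]=\bk(\sqrt{-\alpha})$ is a quadratic extension of $\bk$, the centralizer in $\GL(2,\bk)$ is $L^*$ embedded as $a+bM_\alpha$, and the restriction of $\det$ is exactly the norm form $N_{L/\bk}(a+b\sqrt{-\alpha})=a^2+\alpha b^2$. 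Hence $\pi(S)$ equals $N_{L/\bk}(L^*)\cdot(\bk^*)^2/(\bk^*)^2$.

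The main obstacle is the classical fact that for any quadratic extension $L/\bk$ of a non-archimedean local field of characteristic $\neq 2$, the norm subgroup $N_{L/\bk}(L^*)$ has index exactly $2$ in $\bk^*$. One can invoke local class field theory, or, more in the spirit of the present paper, give a direct elementary verification by distinguishing the unramified and ramified cases and using the explicit structure of $\bk^*/(\bk^*)^2$; this is precisely the flavour of algebraic computation the author proposes to defer to an annex. Granting this, $(\bk^*)^2\subset N_{L/\bk}(L^*)$ forces $\pi(S)$ to have index $2$ in $\bk^*/(\bk^*)^2$, giving exactly two $\Ad(\PSL(2,\bk))$-orbits and thus a two-sheeted hyperboloid, as claimed.
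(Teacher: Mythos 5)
Your argument is correct, and it reaches the same pivot point as the paper --- both proofs come down to computing the image of $\Stab_{\SO(Q)}(v_\alpha)$ in $\SO(Q)/\Ad(\PSL(2,\bk))\simeq\bk^*/(\bk^*)^2$ and then invoking the fact that the norm group of a quadratic extension of a local field has index $2$ (the paper cites O'Meara 63:13a for precisely this) --- but the route you take to that image is genuinely different. The paper argues geometrically and treats the two cases by separate devices: when $-1\in\bar\alpha$ it uses Witt's theorem to replace $v_\alpha$ by $(0,1,0)$, whose stabilizer visibly contains the diagonal matrices $d_\beta$ for all $\beta\in\bk^*$; when $-1\notin\bar\alpha$ it slices the isotropic cone by the affine plane $v_\alpha^\perp+\frac{1}{\alpha}v_\alpha$ and determines which semi-cones $\C_{\bar\beta}$ the slice meets, recognizing that set of classes as the norm group modulo squares. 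You instead realize $\bk^3$ as traceless $2\times 2$ matrices, so that the stabilizer becomes the centralizer of $M_\alpha$, i.e.\ the unit group of the \'etale algebra $\bk[X]/(X^2+\alpha)$, on which the determinant is tautologically either surjective onto $\bk^*$ (split case) or the norm form of $\bk[\sqrt{-\alpha}]$ (field case). Your version is more uniform --- the dichotomy is just the two isomorphism types of a quadratic algebra --- and it makes the appearance of the norm group conceptually transparent rather than the outcome of a computation; what it forgoes is the explicit list of which semi-cones each sheet meets, which the paper reuses afterwards (in the remark following the proposition and in the duality discussion). If you write this up, spell out the two ingredients you currently only gesture at: the orbit-counting identity $\#(N\backslash G/S)=[G/N:\pi(S)]$ for $N$ normal in $G$, and the fact (recorded in the paper just before the proposition) that the quotient $\SO(Q)\to\bk^*/(\bk^*)^2$ is induced by $\det$ on $\PGL(2,\bk)$.
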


\begin{proof}
\emph{First case: $-1$ belongs to $\bar\alpha$:} then the orbit
$\SO(Q).v_\alpha$ is homothetic to the orbit $\SO(Q).v_{-1}$. But we have
$Q(-1,0,1)=Q(0,1,0)$. Witt's theorem implies that the orbit $\SO(Q).v_{-1}$
coincide with the orbit 
$$\SO(Q).\begin{pmatrix}0\\1\\0\end{pmatrix}.$$
One may see that this latter orbit is a one-sheeted hyperboloid:
the group $$\Stab\begin{pmatrix}0\\1\\0\end{pmatrix}$$ contains all the matrices
$$d_\beta = \begin{pmatrix} \beta &  & \\& 1&\\&& \beta^{-1}\end{pmatrix}, \textrm{ for } \beta\in \bk^*.$$
Hence we have $$\Ad(\PSL(2,\bk)) \Stab(0,1,0)= \SO(Q),$$ 
which proves that
$$\SO(Q).(0,1,0)=\Ad(\PSL(2,\bk)).(0,1,0).$$

\emph{Second case: $-1$ does not belong to $\bar\alpha$:} then the stabilizer of
$v_\alpha$ is the orthogonal group of $Q$ restricted to $v_\alpha^{\perp}$. The
form $Q_{|v_\alpha^{\perp}}$ is equivalent to the form $Q_\alpha(u,v)=-\alpha
u^2-v^2$. The latter is anisotropic: $Q(u,v)=0$ would imply
$\alpha=-\frac{v^2}{u^2}$, so $-1$ would belong to $\bar \alpha$.
In order to understand $\Ad(\PSL(2,\bk))\Stab(v_\alpha)$, we shall understand
how the action of $\Stab(v_\alpha)$ permutes the semi-cones. 

Let $P$ be the affine plane $v_\alpha^{\perp}+\frac{1}{\alpha}v_\alpha$.
The plane $P$ is invariant under $\Stab(v_\alpha)$ and has equation:
$$\begin{pmatrix}a\\b\\c\end{pmatrix}\in P \textrm{ iff }\alpha c+a=1.$$
Look at the intersection $P\cap \C$ of $P$ and the isotropic cone. The action of
$\Stab(v_\alpha)$ on $\C$ will be transitive on the component of $P\cap \C$. So 
we compute the set of $\bar\beta \in \bk^*/(\bk^*)^2$ such that $P$
intersects $\C_{\bar\beta}$. A vector ${}^t(a,b,c)$ 
belongs to $P\cap \C$ if and only if its entries satisfy:
$$\left\{\begin{matrix} \alpha c+a & = & 1 \\ ac & = & b^2 \end{matrix}\right. .$$
We are only interested in the common class $\bar\beta$ modulo squares of $a$ and $c$ (in order to determine the semi-cone $\C_{\bar\beta}$ the solution belongs to). The $\bar\beta$'s which are solutions are exactly those verifying:
$$1\in \bar\beta+\alpha\bar\beta.$$
As $-1\not \in \bar\alpha$, this implies that $1\in \bar \beta$ or $1\in\alpha
\bar\beta$. So those $\bar \beta$ are exactly the elements of:
$$\{[\alpha +y^2] \in \bk^*/(\bk^*)^2 \textrm{ for } y\in \bk^*\}.$$ 
In other terms, this set is the norm group $\mathcal
N_{\left[\bk[\sqrt{-\alpha}];\bk\right]}$ (modulo squares) of the quadratic
extension $\bk[\sqrt{-\alpha}]$ (see \cite{o'meara}).

We know \cite[63:13a]{o'meara} that this set is always an index $2$ subgroup of $\bk^*/(\bk^*)^2$. 
As said before, the subgroup $\Stab(v_\alpha)$ permutes the $\C_\beta$'s which
intersect $P$ (by Witt's theorem). Hence it has two distinct orbits among the
$\C_{\bar\beta}$'s, and the orbit of $v_\alpha$ is a two-sheeted hyperboloid.
\end{proof}

\begin{remark}
For the very last point in the above proof, and $\bk = \Q_p$, instead of
referring to \cite{o'meara}, one may alternatively check the following without
difficulties:
\begin{itemize}
 \item $P$ always intersects the semi-cone $\C_{\bar 1}$ associated to the class
of squares,
 \item if $p$ is odd, and ${\bar\alpha}$ has an even valuation in $\bk$, $P$
intersects the two $\C_{\bar\beta}$'s for $\bar\beta$ of even valuation,
 \item if $p$ is odd, and ${\bar\alpha}$ has an odd valuation, $P$ intersects
$\C_1$ and $\C_{\bar\alpha}$,
 \item if $\bk=\Q_2$, one verifies for each class that $P$ intersects four
semi-cones. For example, in $\Q_2$, if ${\bar\alpha}$ is the class of squares,
$P$ intersects $\C_{\bar\beta}$ for $\bar\beta$ equals $\bar 1$, $\bar 2$, $\bar
5$ and $\bar{10}$.
\end{itemize}
\end{remark}

Another way to state the previous proposition is that for each subgroup $\bar K$
of index $2$ in $\bk^*/(\bk^*)^2$, there is a vector $v_\alpha$ in $\bk^3$ such
that the group $\Stab(v_\alpha)\Ad(\PSL(2,\bk))/\Ad(\PSL(2,\bk))$ is isomorphic
to $\bar K$. Those  subgroups $\bar K$ are the norm groups (modulo squares) of a
quadratic extension of $\bk$. We get the following corollary:

\begin{cor}
 Let $\bar K$ be a subgroup of index $2$ in $\bk^*/(\bk^*)^2$. 
 
 There is a unique $\bar\alpha$ in $\bk^*/(\bk^*)^2$ such that $\bar K$ is the
set $\{[\alpha+y^2] \in \bk^*/(\bk^*)^2 \textrm{ for } y\in \bk^*\}$ for any
$\alpha$ in $\bar\alpha$. The group $\bar K$ is equivalently described as the
norm group (modulo squares) of the extension $\bk[\sqrt{-\alpha}]$.

Moreover for every $\alpha$ in this $\bar\alpha$, the orbit $\SO(Q).v_\alpha$ is
a two-sheeted hyperboloid.
\end{cor}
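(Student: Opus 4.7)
The plan is to recast the statement as the bijectivity of a natural map and then combine a counting argument with injectivity. Consider the map $\Phi : \bar\alpha \mapsto \bar K_{\bar\alpha} := \{[\alpha+y^2] \in \bk^*/(\bk^*)^2 : y \in \bk^*\}$; the substitution $y \mapsto y/s$ shows that the right-hand side depends only on $\bar\alpha$ and not on the choice of representative $\alpha$. The previous proposition already shows that for $\bar\alpha$ with $-1 \notin \bar\alpha$, $\Phi(\bar\alpha)$ is an index-$2$ subgroup of $\bk^*/(\bk^*)^2$, equal to the class modulo squares of the norm group of $\bk[\sqrt{-\alpha}]/\bk$. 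On the other hand, for $\bar\alpha = \overline{-1}$, writing $\alpha = -s^2$ gives $\alpha + y^2 = (y-s)(y+s)$, which visibly hits every class in $\bk^*/(\bk^*)^2$ as $y$ varies, so $\Phi(\overline{-1})$ is not an index-$2$ subgroup. Consequently, the corollary reduces to showing that $\Phi$ restricts to a bijection between $\bk^*/(\bk^*)^2 \setminus \{\overline{-1}\}$ and the set of index-$2$ subgroups of $\bk^*/(\bk^*)^2$.

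I would first match cardinalities. By the discussion preceding the proposition, $\bk^*/(\bk^*)^2 \cong (\Z/2\Z)^r$, with $r = 2$ for odd residue characteristic and $r = 3$ for $\bk = \Q_2$. The set $\bk^*/(\bk^*)^2 \setminus \{\overline{-1}\}$ has $2^r - 1$ elements; the set of index-$2$ subgroups of $(\Z/2\Z)^r$ also has $2^r - 1$ elements, each being the kernel of a nonzero linear form to $\Z/2\Z$. Next I would prove injectivity of $\Phi$. Suppose $\Phi(\bar\alpha) = \Phi(\bar\beta)$; by the proposition these subgroups are the norm groups modulo squares of the quadratic extensions $\bk[\sqrt{-\alpha}]$ and $\bk[\sqrt{-\beta}]$. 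Local class field theory says that distinct quadratic extensions of $\bk$ correspond to distinct index-$2$ subgroups of $\bk^*$ via the norm, so the two extensions coincide. This forces $-\alpha / -\beta \in (\bk^*)^2$, i.e.\ $\bar\alpha = \bar\beta$. Injectivity combined with equal cardinality yields bijectivity, hence existence and uniqueness. The ``moreover'' clause is then exactly the second case of the previous proposition applied to this $\bar\alpha$.

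The main obstacle is the injectivity step, which relies on the standard but non-elementary fact from local class field theory that the norm characterizes a quadratic extension of a local field. An elementary alternative for $\bk = \Q_p$, in the spirit of the remark preceding this corollary, is to list explicitly, for each of the three (or seven, when $p = 2$) admissible classes $\bar\alpha$, the semi-cones $\C_{\bar\beta}$ met by the plane $P$, read $\bar K_{\bar\alpha}$ off directly from this list, and verify by inspection that the resulting subgroups are pairwise distinct and exhaust all index-$2$ subgroups.
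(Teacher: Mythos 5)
Your argument is correct and matches the route the paper intends: the corollary is stated there as a direct restatement of the preceding proposition combined with the standard correspondence between quadratic extensions of a local field and their index-$2$ norm subgroups (the paper's reference to O'Meara), which is exactly what your map $\Phi$ encodes, including the exclusion of $\overline{-1}$ where the relevant form becomes isotropic. Your only variation is to obtain surjectivity by counting ($2^r-1$ classes distinct from $\overline{-1}$ against $2^r-1$ index-$2$ subgroups of $(\Z/2\Z)^r$) instead of citing the existence half of the correspondence, a harmless and slightly more elementary way to close the argument.
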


We prefer to work with subgroups of $\bk^*$ and we hereafter denote by
$K_{\bar\alpha}$ the subgroup of $\bk^*$ such that, for any $\alpha$ in
$\bar\alpha$, we have:
 $$K_{\bar\alpha}=\{\alpha x^2+y^2 \in \bk^* \textrm{ for } x,y\in \bk^*\}$$

\section{Projectivization and duality}\label{sec:duality}

A crucial point for the (real) projective model of the hyperbolic disc consists
in the fact that the positive semi-cone over one sheet of the two-sheeted
hyperboloid is a convex cone. It allows the construction of the natural Hilbert
distance for an open convex subset of the sphere which turns out to be exactly
the hyperbolic distance.

\subsection{The positive semi-cones}

We try here to understand the ``positive semi-cone'' over one of the previously defined sheets. In other terms, we will projectivize the geometry of the previous section, but only under action of "positive" homotheties, i.e. with ratio in $K_{\bar\alpha}$. We fix an $\bar\alpha$ in $\bk^*/(\bk^*)^2$ such that $-1$ does not belong to $\bar\alpha$. We are now interested in the orbits of $K_{\bar\alpha} \Ad(\PSL(2,\bk))$ (the positive semi-cones) among the set $$\{v\in \bk^3\textrm{ such that }Q(v)\in \bar\alpha\}.$$ Once again, we will study stabilizers of points and indices of subgroups to build up the geometry of the situation.

The group $\bk^*\SO(Q)$ acts transitively on the latter set. The index of
$K_{\bar \alpha}  \Ad(\PSL(2,\bk))$ in $\bk^* \SO(Q)$ is $8$ for odd residual
characteristic ($16$ if $k=\Q_2$). Consider the usual vector 
$$v_\alpha=\begin{pmatrix}\alpha\\0\\1\end{pmatrix}$$
for some $\alpha$ in $\bar\alpha$. Of course we have $Q(v_\alpha)=\alpha\in
\bar\alpha$. And the stabilizer of $v_\alpha$ in $\bk^*\SO(Q)$ is generated by
its stabilizer in $\SO(Q)$ and the diagonal matrix $$d=\begin{pmatrix} 1 &
&\\&-1 &\\&&1\end{pmatrix}=-d_{-1}.$$ We therefore get the following lemma:
\begin{lemma}\label{lem:preduality}
Consider the set 
$$\{v\in \bk^3\textrm{ such that }Q(v)\in \bar\alpha\}.$$
The number of disjoint $K_{\bar\alpha}  \Ad(\PSL(2,\bk))$-orbits it decomposes
into is:
\begin{itemize}
 \item $4$ orbits if $-1$ belongs to $K_{\bar\alpha}$.
 \item $2$ orbits if $-1$ does not belong to $K_{\bar\alpha}$.
\end{itemize}
\end{lemma}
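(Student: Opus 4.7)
The plan is to reduce the orbit count to a coset-index computation in the quotient $G/H$, where $G=\bk^*\SO(Q)$ and $H=K_{\bar\alpha}\Ad(\PSL(2,\bk))$. First I would check that $H$ is a normal subgroup of $G$. The factor $\bk^*$ is central in $\GL(3,\bk)$, and a scalar matrix lies in $\SO(Q)$ only if it is the identity (preserving $Q$ forces $\lambda^2=1$, while $\det=1$ forces $\lambda^3=1$), so in fact $\bk^*\SO(Q)\cong\bk^*\times\SO(Q)$. Since $\Ad(\PSL(2,\bk))$ is the kernel of the determinant-modulo-squares homomorphism on $\SO(Q)$, and since $\bk^*$ is central, the product $H$ is normal and
\[
G/H\;\cong\;\bk^*/K_{\bar\alpha}\;\times\;\SO(Q)/\Ad(\PSL(2,\bk))\;\cong\;\bk^*/K_{\bar\alpha}\;\times\;\bk^*/(\bk^*)^2,
\]
of order $2\cdot|\bk^*/(\bk^*)^2|$, matching the announced indices $8$ and $16$.

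Once $H$ is normal, the counting of $H$-orbits on the transitive $G$-set $\{v:Q(v)\in\bar\alpha\}=G\cdot v_\alpha$ reduces, via the standard double-coset formula, to computing the index of the image $\pi(S)$ of $S:=\Stab_G(v_\alpha)$ under the projection $\pi:G\to G/H$. Indeed, for $H\triangleleft G$ the number of $H$-orbits on $G/S$ equals $[G:HS]=[G/H:\pi(S)]$.

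Next I would determine $\pi(S)$ using two pieces of input. The stabilizer $\Stab_{\SO(Q)}(v_\alpha)$ contributes the subgroup $\{\bar 1\}\times K_{\bar\alpha}/(\bk^*)^2$ of $G/H$: its image in the second factor $\SO(Q)/\Ad(\PSL(2,\bk))$ was identified in the corollary preceding the lemma as the norm-group $K_{\bar\alpha}/(\bk^*)^2$, of index $2$. The extra generator of $S$ coming from $\bk^*$, namely $-d_{-1}=(-1)\cdot d_{-1}$, contributes the element $(\overline{-1},\overline{-1})\in\bk^*/K_{\bar\alpha}\times\bk^*/(\bk^*)^2$.

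The case split is then immediate. If $-1\in K_{\bar\alpha}$, the first coordinate of $(\overline{-1},\overline{-1})$ is trivial and the second already lies in $K_{\bar\alpha}/(\bk^*)^2$, so $\pi(S)=\{\bar 1\}\times K_{\bar\alpha}/(\bk^*)^2$ and $[G/H:\pi(S)]=4$. If $-1\notin K_{\bar\alpha}$, the new generator genuinely doubles the subgroup, giving $[G/H:\pi(S)]=2$. I expect the only slightly delicate point to be the bookkeeping in that last paragraph—making sure the image of $(-1)d_{-1}$ is computed in both factors of $G/H$ simultaneously and that the two contributions are combined correctly—while everything else is formal orbit-stabilizer and an appeal to the previously established identification of the image of $\Stab_{\SO(Q)}(v_\alpha)$ with the norm subgroup $K_{\bar\alpha}/(\bk^*)^2$.
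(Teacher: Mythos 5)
Your proof is correct and follows essentially the same route as the paper: both reduce the orbit count to the index of $K_{\bar\alpha}\Ad(\PSL(2,\bk))\Stab(v_\alpha)$ in $\bk^*\SO(Q)$, using the earlier identification of the image of $\Stab_{\SO(Q)}(v_\alpha)$ with the index-two norm subgroup and the extra generator $d=-d_{-1}$ of the stabilizer. Your explicit normality/double-coset justification and the simultaneous bookkeeping of the two factors $\bk^*/K_{\bar\alpha}\times\bk^*/(\bk^*)^2$ merely spell out (and in fact tighten) what the paper's three-line argument leaves implicit.
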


\begin{proof}
 We have seen that $\Ad(\PSL(2,\bk))\Stab(v_\alpha)$ is of index $2$ in
$\SO(Q)$. So the index of the group 
$$K_{\bar\alpha} 
\Ad(\PSL(2,\bk))\Stab(v_\alpha)$$ in $k^*\SO(Q)$ is $4$ or $2$ depending
on whether $d$ belongs to $K_{\bar\alpha} \Ad(\PSL(2,\bk))$ or not. Now, $-1$
belongs to $K_{\bar\alpha}$ if and only if $d$ belongs to $K_{\bar\alpha} 
\Ad(\PSL(2,\bk))$.
\end{proof}

As we are interested in the semi-cones, we will need an ad-hoc sphere, rather than the projective space:
\begin{definition}
 For $\bar\alpha$ in $\bk^*/(\bk^*)^2$ such that $-1$ does not belong to
$\bar\alpha$, the $\bar\alpha$-sphere is:
$$\S_{\bar\alpha} = (\bk^3\setminus\{0\})/K_{\bar\alpha} .$$
\end{definition}

Of course, the $\bar\alpha$-sphere is a $2$-covering of the projective space.

\subsection{Duality}\label{ssec:duality}

We are now prepared to deal with convexity properties. Convexity, in the usual
real sense, may be interpreted as a positivity condition: a subset of the plane
is convex if it is an intersection of half-spaces; or equivalently if it is the
set of points which take \emph{positive} values on a set of affine forms. We
will here follow this idea, translating "positive" into "belonging to $K_{\bar
\alpha}$".

 The polar form of $Q$ is the bilinear form $B$ defined by:
$$B(v,v')=\frac{1}{2}[Q(v+v')-Q(v-v')].$$ Recall that $\C_{\bar 1}$ is the
semi-cone associated to the class of squares:
 $$\C_{\bar 1}=\{(a,b,c)\in \bk^3\setminus\{0\} \textrm{ such that } ac=b^2\textrm{ and }a\textrm{ and }c\textrm{ are squares }\}$$

First of all, for any $\alpha$ in $\bar\alpha$, and $w \in \C_{\bar 1}$, one checks that $B(v_\alpha,w)$ belongs to $K_{\bar\alpha}$. Using the action of $K_{\bar\alpha}.\Ad(\PSL(2,\bk))$, we even get the following duality phenomenon:
$$\forall\alpha\in \bar\alpha\d{, }\forall v\in K_{\bar\alpha}\Ad(\PSL(2,\bk)).v_\alpha\d{ and }\forall w\in\C_{\bar 1}\d{, we have } B(v,w)\in K_{\bar\alpha}.$$ The hope for a possible theory of $p$-adic convexity raises up with the following theorem:
\begin{theorem}\label{the:duality}
 Let $\bar\alpha$ be an element of $\bk^*/(\bk^*)^2$ such that $-1 \not\in \bar\alpha$. 

We have equality between the two following sets:
\begin{enumerate}
 \item $\left\{K_{\bar\alpha}\Ad(\PSL(2,\bk)).v_\alpha\d{ for }\alpha\in \bar\alpha\right\}$,
 \item $\H_{\bar\alpha}=\{v\in\bk^3\d{ such that for all }w\in \C_1\d{, we have }B(v,w)\in K_{\bar\alpha}\}$.
\end{enumerate}
$\H_{\bar\alpha}$ is the union of one or two $K_{\bar\alpha}\Ad(\PSL(2,\Q_p))$-orbits: two if $-1$ belongs to $K_{\bar \alpha}$, else one.
\end{theorem}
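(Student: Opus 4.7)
The plan is to prove the two inclusions separately and then count orbits. For $(1)\subseteq\H_{\bar\alpha}$, I parametrise $\C_{\bar 1}=\{(s^2,st,t^2):(s,t)\in\bk^2\setminus\{0\}\}$: a direct computation gives $B(v_\alpha,(s^2,st,t^2))=s^2+\alpha t^2$, a value of the norm form of $\bk[\sqrt{-\alpha}]$, hence in $K_{\bar\alpha}$. The set $\H_{\bar\alpha}$ is invariant under $K_{\bar\alpha}\Ad(\PSL(2,\bk))$: for $\lambda\in K_{\bar\alpha}$, $B(\lambda v,w)=\lambda B(v,w)\in K_{\bar\alpha}$; for $g\in\Ad(\PSL(2,\bk))$, $B(gv,w)=B(v,g^{-1}w)$ with $g^{-1}w\in\C_{\bar 1}$ since $\Ad(\PSL(2,\bk))$ stabilises every semi-cone (Proposition~\ref{prop:isotropic}). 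Hence each orbit $K_{\bar\alpha}\Ad(\PSL(2,\bk))\cdot v_\alpha$ lies in $\H_{\bar\alpha}$.

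For the reverse inclusion, take $v=(x,y,z)\in\H_{\bar\alpha}$. The condition $B(v,\cdot)\in K_{\bar\alpha}$ on $\C_{\bar 1}$ translates into the binary form $F_v(s,t):=zs^2-2yst+xt^2$ taking values in $K_{\bar\alpha}$ on $\bk^2\setminus\{0\}$; in particular $F_v$ is anisotropic and $z\neq 0$ (otherwise $F_v(1,0)=0$). Completing the square,
\[
F_v(s,t)=\tfrac{1}{z}\bigl[(zs-yt)^2+Q(v)\,t^2\bigr],
\]
so the image of $F_v$ is the coset $z^{-1}K_{\overline{Q(v)}}$ of the norm group of $\bk[\sqrt{-Q(v)}]$. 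For this coset of an index-$2$ subgroup to fit in the index-$2$ subgroup $K_{\bar\alpha}$, the two must coincide (the intersection of two distinct index-$2$ subgroups has index $4$, too small to swallow any coset of either), forcing $\overline{Q(v)}=\bar\alpha$; the containment $z^{-1}K_{\bar\alpha}\subseteq K_{\bar\alpha}$ then forces $z\in K_{\bar\alpha}$. Setting $\alpha_v=Q(v)/z^2\in\bar\alpha$, I rescale by $z^{-1}\in K_{\bar\alpha}$ to obtain $(x/z,y/z,1)$, and apply the element of $N^+$ (Fact~\ref{fact:Iwazawa}) with parameter $w=-y/z$; a short computation yields $v_{\alpha_v}=(\alpha_v,0,1)$. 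Since $N^+\subseteq\Ad(\PSL(2,\bk))$ (it stabilises every semi-cone), this places $v$ in $K_{\bar\alpha}\Ad(\PSL(2,\bk))\cdot v_{\alpha_v}\subseteq(1)$.

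For the orbit count, the analysis above shows that for any $v\in\{Q\in\bar\alpha\}$ (where $z\neq 0$ because $-1\notin\bar\alpha$), the values $B(v,\cdot)$ on $\C_{\bar 1}$ form the coset $z^{-1}K_{\bar\alpha}$, which is either $K_{\bar\alpha}$ or the unique other coset $\mu K_{\bar\alpha}$. Hence $\{Q\in\bar\alpha\}=\H_{\bar\alpha}\sqcup\mu\H_{\bar\alpha}$ and scalar multiplication by $\mu$ bijects orbits on one side with those on the other. Combined with Lemma~\ref{lem:preduality} ($2$ or $4$ total orbits according to whether $-1\notin K_{\bar\alpha}$ or $-1\in K_{\bar\alpha}$), $\H_{\bar\alpha}$ carries exactly half: one orbit in the first case, two in the second.

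The main obstacle is the analysis of $F_v$: one must use that over a local field the values of an anisotropic binary form constitute a coset of the norm group of its associated quadratic extension, then exploit the rigidity of index-$2$ subgroups of $\bk^*$ to simultaneously extract $\overline{Q(v)}=\bar\alpha$ and $z\in K_{\bar\alpha}$. The subsequent reduction via $N^+$ is then routine linear algebra.
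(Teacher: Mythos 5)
Your proof is correct, and it reaches the conclusion by a more explicit route than the paper does at the one point where explicitness matters. The paper handles the easy inclusion the same way you do (checking $B(v_\alpha,(s^2,st,t^2))=s^2+\alpha t^2\in K_{\bar\alpha}$ and invoking invariance), and its orbit count is structurally identical to yours: it observes that the level set $\{Q\in\bar\alpha\}$ is a single $\bk^*\SO(Q)$-orbit splitting into $\H_{\bar\alpha}$ and its complement, exhibits the complement as the image of $\H_{\bar\alpha}$ under the diagonal element $d_y$ with $y\notin K_{\bar\alpha}$ (playing exactly the role of your scalar $\mu$), notes that this element normalizes $\Ad(\PSL(2,\bk))$ so the two halves carry equally many orbits, and concludes from Lemma \ref{lem:preduality}. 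Where you genuinely diverge is the reverse inclusion: the paper simply asserts that the level set splits into $\H_{\bar\alpha}$ and $d_y\H_{\bar\alpha}$, leaving implicit the key fact that for every $v$ in the level set the values $B(v,\cdot)$ on $\C_{\bar 1}$ fill out a single coset of $K_{\bar\alpha}$. You prove this pointwise via the completed square $zF_v(s,t)=(zs-yt)^2+Q(v)t^2$, the identification of the value set of an anisotropic binary form with a coset of the norm group of its splitting field, and the rigidity of index-$2$ subgroups (a coset of one contained in another forces equality), after which an explicit element of $N^+$ moves $z^{-1}v$ to $v_{\alpha_v}$. This buys a self-contained, verifiable argument that also yields $\overline{Q(v)}=\bar\alpha$ and $z\in K_{\bar\alpha}$ as byproducts, at the cost of invoking the classification of norm groups (the paper's Corollary on the bijection $\bar\alpha\mapsto K_{\bar\alpha}$) where the paper's version only needs transitivity of $\bk^*\SO(Q)$ and Witt's theorem. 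Both are valid; yours documents the step the paper glosses over.
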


\begin{remark}
 The projectivization of $\H_{\bar\alpha}$ will be our hyperbolic discs. One
should not be disappointed by the possibility for them to be the union of two
distinct $K_{\bar\alpha}\Ad(\PSL(2,\bk))$-orbits. This will even be our
preferred case later on. Let us recall, maybe in a yet cryptic way, that the
tree of $\PSL(2,\Q_p)$ is the union of two disjoint orbits under
$\PSL(2,\Q_p)$. 
\end{remark}

\begin{proof}
 The set $\{v\in \bk^3\textrm{ such that }Q(v)\in \bar\alpha\}$ is a single
$\bk^*  \SO(Q)$-orbit. It splits into $\H_{\bar\alpha}$ and its complementary,
which is also the image of $\H_{\bar\alpha}$ under the following matrix, for any
$y$ not belonging to $K_{\bar\alpha}$: $$d_y=\begin{pmatrix} y & &\\
&1&\\&&y^{-1}\end{pmatrix}.$$ 
Both of them are $K_{\bar\alpha}  \Ad(\PSL(2,\bk))$-invariant. Moreover, as $d_y$ normalizes $\Ad(\PSL(2,\bk))$, they decompose in the same number of orbits under $K_{\bar\alpha}  \Ad(\PSL(2,\bk))$. The previous lemma \ref{lem:preduality} implies that $\H_{\bar\alpha}$ splits into two orbits if $-1$ belongs to $K_{\bar\alpha}$, else is a single orbit. 
\end{proof}

We found our hyperbolic discs ! This section ends with the following definition:

\begin{definition}
  Let $\bar\alpha$ be an element of $\bk^*/(\bk^*)^2$ such that $-1 \not\in \bar\alpha$. 
The $\bar\alpha$-hyperbolic disc, denoted by $\D_{\bar\alpha}$, is the
projection of $\H_{\bar\alpha}$ to the $\bar\alpha$-sphere $\S_{\bar\alpha}$.
\end{definition}

Those are the main characters of our paper. We will endow them with a distance and study their geometry. Let us stress out that for each field $\bk$ one find several hyperbolic discs: one for each class $\bar \alpha$ not containing $-1$. There are $3$ of them if the residual characteristic is odd and $7$ for $\Q_2$. And they are different, in the sense that the action of $\PSL(2,\bk)$ on them is different. It may have one or two orbits and the stabilizer of a point are not conjugated for different classes $\bar\alpha$.

\section{Convexity and Hilbert distance for open sets in the projective plane of
local fields}

The present section tries to lay down the basis of a Hilbert geometry over local fields. We first define a notion of convexity inspired by (and applicable to) the hyperbolic discs just constructed ; then a natural distance for these convex sets (the Hilbert distance).

Hilbert geometries \cite{hilbert,delaharpe,beardon, vernicos} are fascinating and well-studied objects. We deliberately focus in this text on the example of the hyperbolic discs and use the convexity as a way to define an interesting structure on our $\D_{\bar\alpha}$. We nevertheless think that the existence of these examples gives a good motivation for studying in a more systematic way the hereafter proposed notion of convexity. As another example of a convex set and its Hilbert distance we describe the triangle.

\subsection{Convexity in the setting of local fields}

We propose here a definition for convexity in the setting of local fields, hoping it will prove convenient and useful. We choose to work in duality, copying the fact that a convex set in $\R^2$ may be defined as the positive side of a set of affine forms. This definition is also motivated by our example of hyperbolic discs and by the possibility (to be seen afterwards) to construct a Hilbert distance.

Fix a local field $\bk$ and $H$ a subgroup of finite index in the multiplicative group $\bk^*$. Choose an integer $n\geq 2$, $V$ a $n+1$-dimensional $\bk$-vector space and define the $H$-sphere of $V$:
$$\S_H(V)=(V\setminus\{0\})/H .$$
It is a finite covering of the projective space $\P(V)$. The $H$-spheres of $V$ and its dual $V^*$ are naturally in duality: for $a\in \S_H(V)$ and $b\in \S_H(V^*)$ the class $b(a)$ is well-defined in $\bk^*/H$. We define the ($H$-)dual $\Omega^\circ$ of a set $\Omega$ in $\S_H(V)$ as the set of forms taking values in $H$ on points of $\Omega$:
$$\Omega^\circ =\{f\in \S_H(V^*)\textrm{ such that }\forall x\in \Omega,\; f(x)\in H\}.$$

We may now take the bidual $(\Omega^\circ)^\circ$. By definition, $\Omega$ is
included in its bidual $(\Omega^\circ)^\circ$.
\begin{definition}\label{def:conv1}
A subset $\Omega$ of $\S_H(V)$ is \emph{$H$-convex} if $\Omega$ coincides with its bidual $(\Omega^\circ)^\circ$.
\end{definition}

One can alternatively say that $\Omega$ is $H$-convex if there is some set $\Omega'$ in $\S_H(V^*)$ such that $\Omega$ is the $H$-side of $\Omega'$: $\Omega$ is the set of $\omega$ such that for all $\omega'\in \Omega'$, we have $\omega'(\omega)\in H$. Indeed, one take $\Omega'=\Omega^\circ$.

The previous definition immediately leads to the definition of a convex hull:
\begin{definition}\label{def:conv2}
 The \emph{convex hull} of a subset $C$ of $\S_H(V)$ is the subset
$\Hull(C)=(C^\circ)^\circ$ of
$\S_H(V)$.
\end{definition}

Observe that $\Hull(C)$ is the smallest of $\S_H(V)$ containing $C$.

When $\bk=\R$ and $H=\R_{>0}$, definitions \ref{def:conv1} and \ref{def:conv2}
coincide with the usual definitions. When
$\bk$ is any field and $H=\bk^*$, the projection to
the $H$-sphere of the complement to a finite union of hyperplanes is
an example of convex set. And, in view of our theorem \ref{the:duality}, the
hyperbolic discs $\D_{\bar\alpha}$ we just defined are $K_{\bar\alpha}$-convex.

Any polytope will be $H$-convex: take a finite set of forms and their common $H$-side. One may describe more precisely the triangle: for any local field $\bk$ and a finite index subgroup $H$ of $\bk^*$, we define the $H$-triangle $\mathbf T_H$: it is the set of points $[x,y,z]$ in the $H$-sphere whose three coordinates are in $H$. Then $\mathbf T_H$ is the dual of set $\{e_1,e_2,e_3\}$ of the three coordinates forms.

\subsection{Hilbert distance, revisited}

This subsection is devoted to a rephrasing of the Hilbert distance in the real case. We want to redefine it without any mention to the ordering on $\R$. This is possible, even if the definition proposed might seem highly artificial for this real case. We will then move on in the next subsection to other fields, trying to transpose our new definition.

\begin{figure}[ht]
\begin{center} 
\input{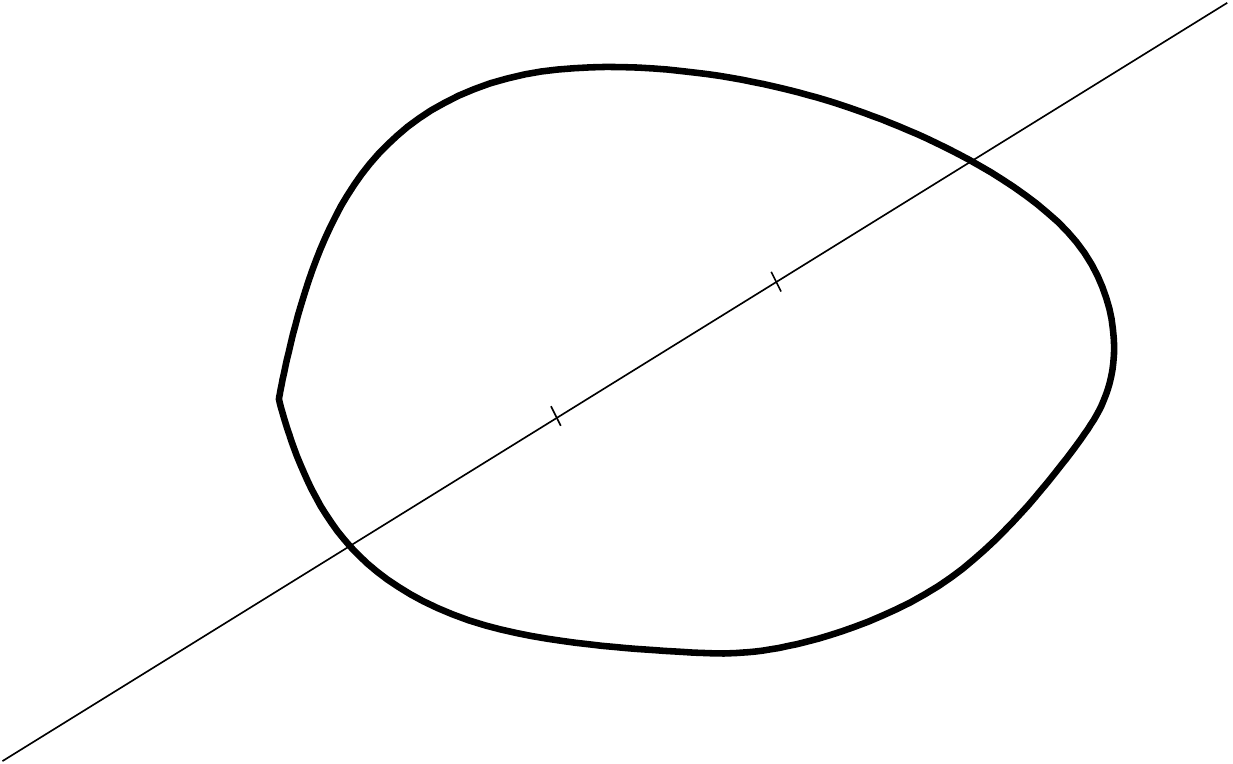_t}
\end{center}
\caption{The Hilbert distance on $\Omega$}\label{fig:distanceHilbert}
\end{figure}

We fix here an open, relatively compact and convex set $\Omega$ in the space $\R^n$ and take $V=\R^{n+1}$, $H=\R_{>0}$. Via the adjunction of an hyperplane at infinity to $\R^n$, $\Omega$ becomes an open and proper convex set in the projective space $\P(V)$. So we have a convex lift (still called $\Omega$) in the (usual) sphere.
The Hilbert distance is classically defined in the following way:

for $x$ and $y$ in $\Omega$, let $a$ and $b$ be the intersections between the line $(xy)$ and the frontier $\partial \Omega$ of $\Omega$, such that $a$, $x$, $y$, $b$ are in this order on the line $(xy)$ (see figure \ref{fig:distanceHilbert}). Consider (noting $zt$ the distance between two points $z$ and $t$):
$$D_\Omega(x,y)=[a,b,x,y]=\frac{ay}{ax}\frac{bx}{by}$$
and take its logarithm:
$$d_\Omega(x,y)=\ln\left([a,b,x,y]\right)=\ln\left(\frac{ay}{ax}\frac{bx}{by}\right).$$
It is well known that $d_\Omega$ is a distance \cite{hilbert,beardon}. The only point necessitating a proof is the triangular inequality. Moreover, it is invariant under projective transformation. Once again, we will not get further into the  theory of Hilbert distance. We just want to define it in another way.

The first problem of this definition for other fields than $\R$ is the word "boundary".  In totally disconnected fields such boundaries tend to be void. We prefer to use the duality. So if $\phi$ and $\phi'$
belong to $V^*$, $x$ and $x'$ belong to $V$ with neither $\phi(x)$ nor $\phi'(x')$ null, we note:
$$[\phi,\phi',x,x']=\frac{\phi(x')}{\phi(x)}\frac{\phi'(x)}{\phi'(x')}$$

This formula is invariant under homothety on $V$ or $V^*$, and under the action of $\GL(V)$ conjointly on $V$ and $V^*$.
\begin{lemma}
 Let $\Omega$ be an open proper convex set in the sphere, $\Omega^\circ$ be its dual. Then we have, for all $x$ and $y$ in $\Omega$:

$$D_\Omega(x,y)=\max_{\phi,\, \phi'\in \Omega^\circ} [\phi,\phi',x,y]$$
\end{lemma}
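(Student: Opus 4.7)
The plan is to reduce everything to the affine line $L$ through $x$ and $y$. On $L$ the restrictions $\phi|_L$ and $\phi'|_L$ are affine functions, the cross-ratio $[\phi,\phi',x,y]$ depends only on these restrictions, and $\Omega\cap L$ is an open segment with endpoints $a$ and $b$. Any $\phi\in\Omega^\circ$ is in particular positive on this open segment, so $\phi|_L$ is either a nonzero constant or has a unique zero $t_0$ lying outside $(a,b)$, i.e.\ at or beyond $a$, or at or beyond $b$.

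For the inequality $D_\Omega(x,y)\geq[\phi,\phi',x,y]$, I would parametrize $L$ so that $a=0$, $b=1$, $x=s$, $y=t$ with $0<s\leq t<1$. Writing $\phi|_L(u)=\gamma(u-t_0)$, one computes $\phi(y)/\phi(x)=(t-t_0)/(s-t_0)$. A one-line monotonicity check shows this ratio is strictly increasing in $t_0$ on $(-\infty,0]$ and is at most $1$ on $[1,+\infty)$, so in every case $\phi(y)/\phi(x)\leq t/s = ay/ax$, with equality iff $t_0=0$. By the symmetric analysis $\phi'(x)/\phi'(y)\leq (1-s)/(1-t)=bx/by$ with equality iff $t_0'=1$. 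Since $\phi$ and $\phi'$ vary independently, multiplying these two bounds gives $[\phi,\phi',x,y]\leq D_\Omega(x,y)$. For the reverse inequality I need to realize the bound: let $\phi$ be a defining linear form for a supporting hyperplane to $\Omega$ at $a$, normalized so that $\phi\geq 0$ on $\overline\Omega$. Openness of $\Omega$ together with the linearity of $\phi$ preclude any interior zero---a zero of $\phi$ in $\Omega$ would force $\phi$ to change sign in a neighborhood---so $\phi\in\Omega^\circ$ and $\phi|_L$ vanishes at $a$. Picking $\phi'$ similarly at $b$ realizes the extremal case $t_0=0$, $t_0'=1$ from the previous step and attains $D_\Omega(x,y)$.

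The only genuine obstacle is pinpointing these extremal forms: nothing in the definition of $\Omega^\circ$ forces a form to vanish on $\partial\Omega$, so one must invoke supporting hyperplanes to produce them. The rest is the elementary monotonicity calculation above, and it is a pleasant structural feature that the two factors of the cross-ratio decouple into independent one-variable optimizations---a feature one would like to preserve when transposing the definition to the $p$-adic setting in the next subsection.
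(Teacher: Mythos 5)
Your proof is correct and follows essentially the same route as the paper: you reduce to the line $(xy)$, identify $\phi(y)/\phi(x)$ with the ratio determined by the zero $t_0$ of $\phi|_L$ (the paper phrases this via Thales' theorem), observe that $t_0$ lies outside $\Omega\cap L$, and attain the maximum with supporting hyperplanes at $a$ and $b$. You merely make explicit two points the paper leaves implicit --- the one-variable monotonicity in $t_0$ and the verification that the supporting forms indeed lie in $\Omega^\circ$ --- which is a welcome amount of extra care.
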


So the Hilbert distance is the logarithm of this maximum.

\begin{proof}
 Consider $a'$ (resp. $b'$) the intersection point between $\ker(\phi)$ (resp.
$\ker(\phi')$) and the line $(xy)$. By convexity, and the fact that
$\phi\in\Omega^\circ$, $a'$ and $b'$ do not belong to $\Omega$. And the theorem
of Thales implies that $\frac{\phi(x)}{\phi(y)}$ equals $\frac{a'-x}{a'-y}$ (see
figure \ref{fig:Thales}) and the same with $\phi'$ and $b'$. Hence we get the
equality between $[\phi,\phi',x,y]$ and $[a',b',x,y]$. The maximum of the latter
is attained for $a'=a$ and $b'=b$, i.e. $\ker(\phi)$ a supporting hyperplane of
$\Omega$ through $a$, and $\ker(\phi')$ through $b$.
\end{proof}

\begin{figure}[ht]
 \begin{center}
 \scalebox{.7}{ \input{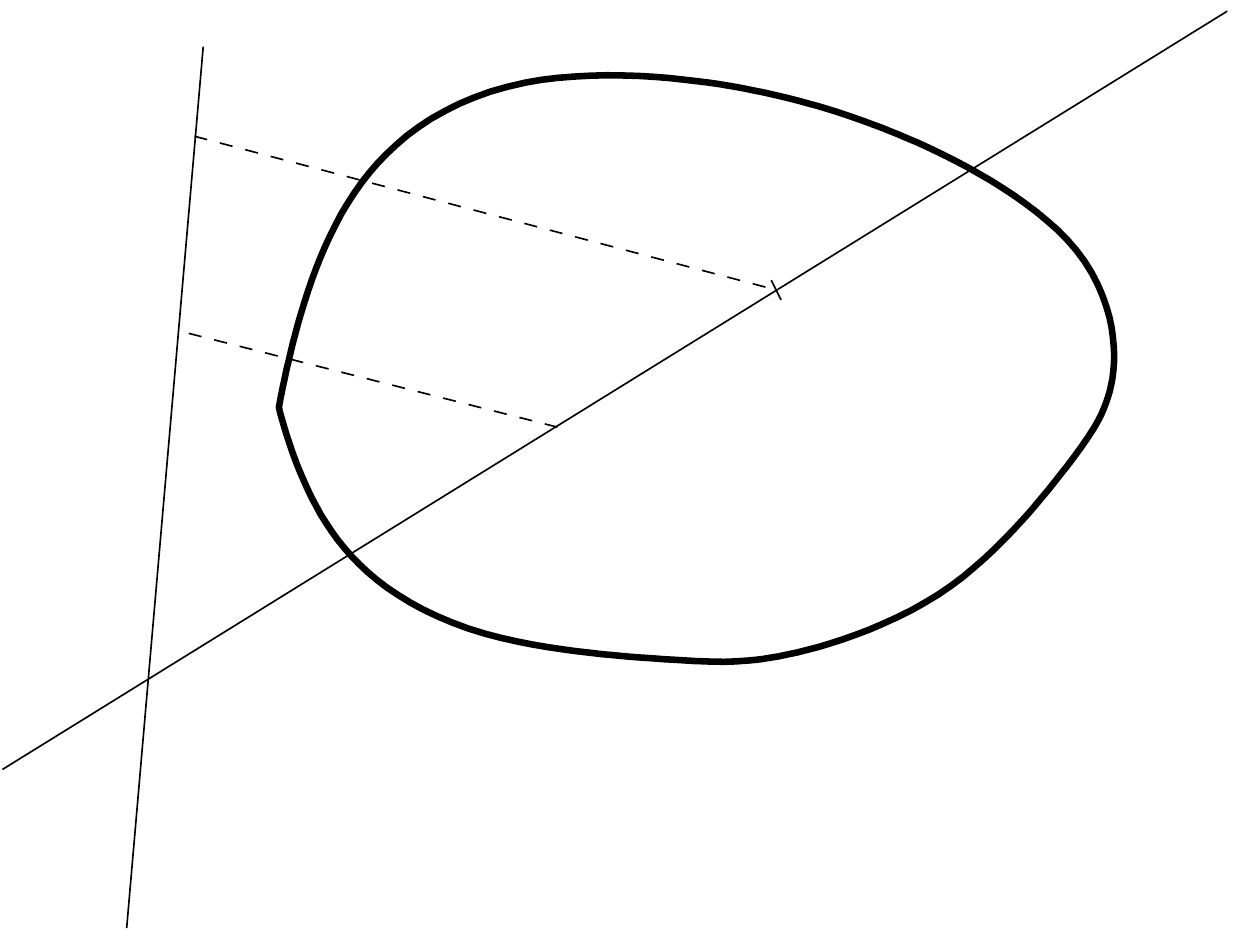_t}}
 \end{center}
\caption{Rewriting the Hilbert distance}\label{fig:Thales}
\end{figure}

At this point we do not mention $\partial \Omega$ any more, which is the first step. But we use the notion of maximum, unavailable in other fields. The following step is given by this lemma:
\begin{lemma}\label{lem:distanceintervalle}
 Let $\Omega$ be an open proper convex set in the sphere, $\Omega^\circ$ be its dual. Then we have, for all $x$ and $y$ in $\Omega$:

$$\{ [\phi,\phi',x,y]\textrm{ for }\phi,\, \phi'\in \Omega^\circ \} =[ D_\Omega(x,y)^{-1},D_\Omega(x,y)].$$
\end{lemma}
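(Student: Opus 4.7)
The plan is to deduce the statement from the previous lemma by combining (i) a symmetry observation about the cross-ratio bracket, (ii) the fact that $\Omega^{\circ}$ is path-connected, and (iii) continuity. Inclusion of the set of cross-ratio values inside $[D_{\Omega}(x,y)^{-1},D_{\Omega}(x,y)]$ is essentially free; the real work is the reverse inclusion, which I will obtain by an intermediate-value argument.

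First I would record the elementary identity $[\phi',\phi,x,y]=[\phi,\phi',x,y]^{-1}$, immediate from the defining formula $[\phi,\phi',x,y]=\frac{\phi(y)}{\phi(x)}\cdot\frac{\phi'(x)}{\phi'(y)}$. Consequently the set $E(x,y):=\{[\phi,\phi',x,y]:\phi,\phi'\in\Omega^{\circ}\}$ is invariant under $t\mapsto t^{-1}$. By the previous lemma $\max E(x,y)=D_{\Omega}(x,y)$, and this maximum is attained when $\ker\phi$ and $\ker\phi'$ are supporting hyperplanes of $\Omega$ through $a$ and $b$ respectively; applying the inversion symmetry then gives $\min E(x,y)=D_{\Omega}(x,y)^{-1}$, attained by swapping these two forms. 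In particular $E(x,y)\subset[D_{\Omega}(x,y)^{-1},D_{\Omega}(x,y)]$ and the endpoints belong to $E(x,y)$.

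To obtain the reverse inclusion I would show that $E(x,y)$ is a connected subset of $\mathbf R_{>0}$, hence an interval, and then use that it already contains the two extremes. Connectedness follows from two ingredients. The map
\[
\Psi\colon\Omega^{\circ}\times\Omega^{\circ}\longrightarrow\mathbf R_{>0},\qquad (\phi,\phi')\longmapsto[\phi,\phi',x,y],
\]
is continuous, because for $\phi\in\Omega^{\circ}$ and $x\in\Omega$ we have $\phi(x)>0$, so no denominator vanishes. And $\Omega^{\circ}$ is a convex cone in $V^{*}$ in the usual sense: if $\phi_{1},\phi_{2}$ are both strictly positive on $\Omega$, so is $t\phi_{1}+(1-t)\phi_{2}$ for any $t\in[0,1]$. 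Hence $\Omega^{\circ}$ is path-connected, so $\Omega^{\circ}\times\Omega^{\circ}$ is path-connected, so $E(x,y)=\Psi(\Omega^{\circ}\times\Omega^{\circ})$ is connected in $\mathbf R_{>0}$.

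A connected subset of $\mathbf R$ is an interval, and $E(x,y)$ contains both $D_{\Omega}(x,y)^{-1}$ and $D_{\Omega}(x,y)$, so $[D_{\Omega}(x,y)^{-1},D_{\Omega}(x,y)]\subset E(x,y)$, and combined with the upper bound from the previous lemma we get equality. The only subtle point—really the main obstacle—is the continuity/connectedness step: one must be sure that the parameter space $\Omega^{\circ}$ is not split, say, by the natural dichotomy between supporting hyperplanes on the ``two sides'' of the line $(xy)$. The convex-cone argument above handles exactly this, and it is also the only place where the convexity of $\Omega$ (rather than just of $\Omega^{\circ}$) is not needed: we only use $\Omega$ open to guarantee $\phi(x)>0$ on $\Omega$ for $\phi\in\Omega^{\circ}$.
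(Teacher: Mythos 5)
Your proof is correct and follows essentially the same route as the paper's: the inversion symmetry $[\phi',\phi,x,y]=[\phi,\phi',x,y]^{-1}$ gives the endpoints and the containment in the interval, and then convexity (hence connectedness) of $\Omega^\circ$ together with continuity of the cross-ratio yields the full interval. You merely spell out in more detail the connectedness and non-vanishing-denominator points that the paper states in one line.
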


\begin{proof}
 First of all, we check that $[\phi,\phi',x,y]=[\phi',\phi,x,y]^{-1}$, so the first set contains $D_\Omega(x,y)$ (its maximum by the previous lemma) and $D_\Omega(x,y)^{-1}$ and is contained in the interval $[ D_\Omega(x,y)^{-1},D_\Omega(x,y)]$. Now $\Omega^\circ$ is convex hence connected. We conclude by continuity.
\end{proof}

We may now redefine the Hilbert distance via the Haar measure on $\R^*$. Choose the Haar measure $\mu$ on $\R^*$ defined by the following, for $t>1$:
$$\mu[t^{-1},t]=\ln(t).$$
Then, the previous lemma yields immediately:
\begin{proposition}
 Let $\Omega$ be an open proper convex set in the sphere, $\Omega^\circ$ be its dual. Then the distance $d_\Omega(x,y)$ is given by:

$$d_\Omega(x,y)=\mu\left( \{ [\phi,\phi',x,y]\textrm{ for }\phi,\, \phi'\in \Omega^\circ \} \right).$$
\end{proposition}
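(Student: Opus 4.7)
The proof should be essentially a one-line consequence of Lemma \ref{lem:distanceintervalle} combined with the definition of the chosen Haar measure $\mu$. The plan is to unwind both and observe that they agree.

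First I would invoke Lemma \ref{lem:distanceintervalle} to identify the set
$$\{ [\phi,\phi',x,y]\textrm{ for }\phi,\, \phi'\in \Omega^\circ \}$$
with the symmetric interval $[D_\Omega(x,y)^{-1},D_\Omega(x,y)] \subset \R^*$. Note that $D_\Omega(x,y)\geq 1$ since $\Omega^\circ$ contains a supporting hyperplane witnessing the maximum from the previous lemma, and by symmetry $D_\Omega(x,y)^{-1}\leq 1 \leq D_\Omega(x,y)$, so the interval is of exactly the form $[t^{-1},t]$ on which $\mu$ was prescribed.

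Then I would apply $\mu$ to both sides. By the normalization $\mu([t^{-1},t])=\ln(t)$ for $t>1$ (and trivially $\mu(\{1\})=0$ if $x=y$), this gives
$$\mu\bigl( \{ [\phi,\phi',x,y]\textrm{ for }\phi,\, \phi'\in \Omega^\circ \} \bigr) = \ln\bigl(D_\Omega(x,y)\bigr) = d_\Omega(x,y),$$
where the last equality is the very definition of the Hilbert distance recalled earlier.

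There is no real obstacle: the content is entirely packaged in Lemma \ref{lem:distanceintervalle}, and the proposition is in fact a reformulation designed precisely to make the definition of $d_\Omega$ manifestly transportable to other local fields $\bk$, once one has a Haar measure on $\bk^*$ and a replacement for the interval (namely a subset of $\bk^*/H$ arising from the cross-ratios). The only mild point worth flagging explicitly is that $D_\Omega(x,y)\geq 1$, so that one is genuinely in the range where $\mu$ was defined; this follows from $[\phi,\phi,x,y]=1$ being attained and the interval being symmetric under inversion $[\phi,\phi',x,y]\mapsto [\phi',\phi,x,y]$.
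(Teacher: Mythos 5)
Your proof is correct and is exactly the argument the paper intends: the paper states that the proposition follows immediately from Lemma \ref{lem:distanceintervalle} together with the normalization $\mu([t^{-1},t])=\ln(t)$, which is precisely what you spell out. The extra remark that $D_\Omega(x,y)\geq 1$ is a reasonable sanity check but adds nothing beyond the paper's one-line deduction.
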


We have reached our goal: the latter number may be defined on any field and recovers the Hilbert distance in the real case. 

\begin{remark}
 In this form, the triangular inequality becomes easy, once we check that (with obvious notation) $[\phi,\phi',x,y]=[\phi,\phi',x,z][\phi,\phi',z,y]$ (see lemma \ref{lem:triangineq}). Easy indeed, but one still needs to use some properties of the real numbers, e.g. an ordering. This will be a further problem.
\end{remark}

\subsection{A generalized Hilbert distance}\label{sec:genHilbdist}

We now come back to a more general situation: let $\bk$ be $\R$ or some non-archimedean local fields $\bk$ (of characteristic different from $2$) and $|.|$ its norm. Let $H$ be a finite index subgroup of $\bk^*$ and $V$ an $n+1$-dimensional $\bk$-vector space. Fix a Haar measure $\mu$ on $\bk^*$. We define a notion of symmetric ball:
\begin{definition}
 The symmetric ball of radius $r$ is the set $$\{x\in \bk\textrm{ such that } |x-1|\leq r \textrm{ and } |x^{-1}-1|\leq r\}.$$
\end{definition}

We may also define the notion of proper convex set, extending the notion of properness in the real case:
\begin{definition}
 Let $\Omega$ be a  $H$-convex set in $S_H(V)$. The set $\Omega$ is proper if the intersection $\displaystyle \cap_{\phi \in \Omega^\circ} \ker(\phi)$ equals $\{0\}$.
\end{definition}

We are now able to define a Hilbert distance on proper convex sets.
\begin{theorem}\label{the:distance}
 Let $\Omega$ be an open and proper $H$-convex set in $S_H(V)$, $\Omega^\circ$ be its dual. For $x$, $y$ in $\Omega$, define $d_\Omega (x,y)$ as the measure for $\mu$ of the smallest symmetric ball containing $\left\{ [\phi,\phi',x,y]\textrm{ for }\phi,\, \phi'\in \Omega^\circ \right\}$.

Then $d_\Omega$ is a distance on $\Omega$.
\end{theorem}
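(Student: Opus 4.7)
The plan is to introduce the cross-ratio set $S_{xy} := \{[\phi,\phi',x,y] : \phi,\phi' \in \Omega^\circ\}$, show that its smallest enclosing symmetric ball is a well-defined object of finite Haar measure, and then verify the three distance axioms.

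\textbf{Well-definedness and symmetry.} I would first note that $\Omega^\circ$ is compact in $\S_H(V^*)$: the $H$-sphere is a finite cover of the compact projective space $\P(V^*)$ (as $\bk^*/H$ is finite), and $\Omega^\circ$ is closed in it as an intersection of the closed conditions $\phi(x) \in H$ (using that $H$, of finite index, is closed in $\bk^*$). For any $x, y \in \Omega$ and $\phi, \phi' \in \Omega^\circ$, all four values $\phi(x), \phi(y), \phi'(x), \phi'(y)$ lie in $H \subset \bk^*$, so the cross-ratio map $(\phi,\phi') \mapsto [\phi,\phi',x,y]$ is continuous with values in $\bk^*$. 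Hence $S_{xy}$ is compact in $\bk^*$, bounded away from $0$ and $\infty$, and the smallest symmetric ball containing it is well-defined with finite measure. Symmetry is then immediate: the identity $[\phi,\phi',y,x] = [\phi,\phi',x,y]^{-1}$ gives $S_{yx} = S_{xy}^{-1}$, and symmetric balls are stable under $t \mapsto t^{-1}$ by definition, so the smallest ball containing $S_{xy}$ equals the smallest ball containing $S_{yx}$.

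\textbf{Non-degeneracy.} If $x = y$ in $\S_H(V)$ then $S_{xx} = \{1\}$, contained in the trivial ball $\{1\}$ of Haar measure zero, so $d_\Omega(x,x) = 0$. Conversely, if $d_\Omega(x,y) = 0$ then the smallest symmetric ball containing $S_{xy}$ has measure zero; but every symmetric ball of positive radius has strictly positive Haar measure (in the real case $B_r = [(1+r)^{-1}, 1+r]$ has measure $\ln(1+r)$; in the non-archimedean case $B_r$ equals an open subgroup $1 + \pi^n \mathcal{O}_\bk$ for some finite $n$), so this forces $S_{xy} = \{1\}$. Unwinding the cross-ratio, the ratio $\phi(y)/\phi(x)$ must then be a fixed constant $c \in \bk^*$ independent of $\phi \in \Omega^\circ$. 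Properness of $\Omega$ tells us that $\Omega^\circ$ spans $V^*$, so $y = cx$ in $V$; and picking any $\phi \in \Omega^\circ$ gives $c = \phi(y)/\phi(x) \in H \cdot H^{-1} = H$. Hence $x = y$ in $\S_H(V)$.

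\textbf{Triangle inequality.} A direct computation gives the multiplicative identity $[\phi,\phi',x,z] = [\phi,\phi',x,y]\cdot[\phi,\phi',y,z]$, hence $S_{xz} \subseteq S_{xy}\cdot S_{yz}$ as subsets of $\bk^*$. If $B_{r_1}$ and $B_{r_2}$ denote the smallest symmetric balls containing $S_{xy}$ and $S_{yz}$, it suffices to establish the \emph{ball-product lemma}: $B_{r_1}\cdot B_{r_2}$ is contained in a symmetric ball of Haar measure at most $\mu(B_{r_1}) + \mu(B_{r_2})$. This is the main obstacle, and it must be treated case by case. In the archimedean case one checks directly that $B_{r_1}\cdot B_{r_2} = B_{r_3}$ with $1+r_3 = (1+r_1)(1+r_2)$, so the normalization $\mu[t^{-1},t] = \ln t$ yields exact additivity $\mu(B_{r_3}) = \ln(1+r_1) + \ln(1+r_2)$. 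In the non-archimedean case, for $r < 1$ the symmetric ball is the multiplicative subgroup $1 + \pi^n \mathcal{O}_\bk$; the ultrametric estimate $|(1+a)(1+b)-1| \leq \max(|a|,|b|)$ gives $B_{r_1}\cdot B_{r_2} \subseteq B_{\max(r_1,r_2)}$, and by monotonicity $\mu(B_{\max(r_1,r_2)}) = \max(\mu(B_{r_1}), \mu(B_{r_2})) \leq \mu(B_{r_1})+\mu(B_{r_2})$ (yielding in fact the stronger ultrametric inequality $d_\Omega(x,z) \leq \max(d_\Omega(x,y), d_\Omega(y,z))$ on non-archimedean local fields). The triangle inequality follows.
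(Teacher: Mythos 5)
Your overall strategy is the same as the paper's: reduce everything to the two cross-ratio identities $[\phi,\phi',x,y]=[\phi,\phi',x,z]\cdot[\phi,\phi',z,y]$ and $[\phi,\phi',x,y]^{-1}=[\phi,\phi',y,x]$, use properness for non-degeneracy exactly as the paper does, and reduce the triangle inequality to a ball-product lemma $\mu(BB')\leq\mu(B)+\mu(B')$. Your compactness argument for why the smallest enclosing symmetric ball exists and has finite measure is a welcome addition that the paper leaves implicit.

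There is, however, a genuine gap in your treatment of the ball-product lemma in the non-archimedean case. You only handle symmetric balls of radius $r<1$, where the ball is the subgroup $1+\pi^n\mathcal O_\bk$ and the ultrametric estimate applies. But the symmetric ball of radius $q^t$ with $t\geq 0$ is not a subgroup: it is the ``annulus'' $\{x:\;q^{-t}\leq|x|\leq q^t\}$ (for $t=0$ it is $\mathcal O_\bk^*$), and for two such balls one has $B_{q^t}\cdot B_{q^s}=B_{q^{t+s}}$, whose measure grows \emph{additively}, not as a maximum. Your parenthetical conclusion that $d_\Omega$ satisfies the full ultrametric inequality on non-archimedean fields is therefore false; the paper states explicitly (and its Proposition \ref{prop:calcdist} confirms, with distances $2n+1$ growing linearly along long lines) that the distance is ultrametric only below the threshold $1$ and not at large scales. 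To close the gap you must treat the remaining cases of the lemma: both radii $\geq 1$ (where $\mu(B_{q^{t+s}})\leq\mu(B_{q^t})+\mu(B_{q^s})$ follows from the explicit formula for the measure of these annuli), and the mixed case $r_1<1\leq r_2$ (where $B_{r_1}\subseteq\mathcal O_\bk^*$ so $B_{r_1}B_{r_2}=B_{r_2}$). With those cases added, your argument matches the paper's proof.
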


\begin{proof}
For this proof, we note $B_\Omega(x,y)$ the smallest symmetric ball containing the set $\{ [\phi,\phi',x,y]\textrm{ for }\phi,\, \phi'\in \Omega^\circ \}$.

First of all, if $x\in \Omega$, we have $d_\Omega(x,x)=\mu(\{1\})=0$. Moreover,
if $x\neq y$ are in $\Omega$, fix $X$ and $Y$ some representatives in $V$
(recall that $\Omega$ lives in the $H$-sphere). We cannot have some element
$l\in k$ such that $\phi(X)=l\phi(Y)$ for all $\phi\in \Omega^\circ$, because
$X-lY$ would be in every $\ker(\phi)$, contradicting the properness. Hence the
set $\left\{ [\phi,\phi',x,y]\textrm{ for }\phi,\, \phi'\in \Omega^\circ
\right\}$ is not restricted to $\{1\}$ and the smallest symmetric ball
containing it has a non empty interior. Its measure is not $0$ and $d_\Omega
(x,y)\neq 0$.

We have $[\phi,\phi',x,y]=[\phi',\phi,y,x]$. Hence we have $B_\Omega(x,y)=B_\Omega(y,x)$ and $d_\Omega$ is symmetric: $d_\Omega(x,y)=d_\Omega(y,x)$.

We have already mentioned that
$[\phi,\phi',x,y]=[\phi,\phi',x,z][\phi,\phi',z,y]$. Hence, $B_\Omega(x,y)$ is
included in the set $B_\Omega(x,z).B_\Omega(z,y)$. 
We check the following lemma:
\begin{lemma}\label{lem:triangineq}
 If $B$ and $B'$ are two symmetric balls, we have $\mu(BB')\leq \mu(B)+\mu(B')$.
\end{lemma}

\begin{proof}
If $\bk=\R$, the symmetric balls of radius $t$ in $\R_+^*$ is $[t^{-1},t]$ and
we have $\mu([t^{-1},t])=\ln(t)$ (up to a constant). Hence we have
$$\mu\left(\left[t^{-1},t\right]\left[s^{-1},s\right]\right)=\mu\left(\left[t^{
-1}s^{-1},st\right]\right)= \ln(t)+\ln(s).$$

If $\bk$ is a non archimedean local field, note $\mathcal O$ the ring of integers, $q^{-1}$ the norm of an uniformizer (for $\bk=\Q_p$, this means $\mathcal O= \Z_p$ and $q=p$), let $B_t$ be the symmetric ball of radius $q^t$, $B_s$ of radius $q^s$ (with $t\leq s$ integers). Then $\mu(B_t)=\frac{1}{q-1}q^{t+1}$ if $t< 0$ and $t+1$ if $t\geq 0$ (normalizing $\mu$ by $\mu(\mathcal O^*)=1$).
Moreover one checks that the product $B_tB_s$ is $B_s$ if $t\leq 0$ or $B_{t+s}$. Hence the results also hold in this case. 
\end{proof}
It yields that $d_\Omega$ verifies the triangular inequality, and even in the $p$-adic case, an ultrametric inequality if a distance is lesser than $1$.
\end{proof}

Remark that in the real case, we just redefined the Hilbert distance, nothing
more. We hope that this definition may give some nice non-standard Hilbert
geometries. We shall try to get some insights on the possible geometries elsewhere. We focus in this paper on the first important examples,
namely the hyperbolic discs $\D_{\bar\alpha}$. We study in the following section
their geometry.

Let us discuss a bit the triangle before that. Let $\bk=\Q_p$ and $H$ be the
subgroup of squares. Then one checks that the dual $(\mathbf T_H)^\circ$ of the
triangle is composed of exactly three points in the $H$-sphere of $(\Q_p^3)'$:
the projection of the three coordinate forms, denoted $e_1$, $e_2$ and $e_3$.

As the dual is finite, we really need to consider symmetric balls to fill up the sets of cross-ratios involved in the definition of distance. Indeed, if we had not filled up, we would always compute the measure of a finite set. With the definition we gave, take two points $P_1=[p^{n_1}x:1:p^{n_2}y]$ and $P_2=[p^{m_1}a:1:p^{m_2}b]$ in the triangle (with $x$, $y$, $a$ and $b$ in $\Z_p^*$). Let $N=n_1-m_1$, $M=n_2-m_2$. Then we have (normalizing the Haar measure $\mu$ on $\Q_p$ such that $\mu(\Z_p^*)=1$:
\begin{proposition}
The distance in $\mathbf T_H$ between $P_1$ and $P_2$ is the following:
 \begin{itemize}
 \item If $N=M=0$, then the distance is less than $1$.
 \item Else, it is $\d{max}\left\{N,M,-N,-M,N-M,M-N\right\}+1$.
 \end{itemize}
 Moreover, on the balls of radius $1$, the distance is ultrametric.
\end{proposition}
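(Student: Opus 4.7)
I will compute the nine cross-ratios $[e_i, e_j, P_1, P_2]$ explicitly, identify the smallest symmetric ball containing them, and apply the measure formula from the proof of Lemma~\ref{lem:triangineq}.

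Using the lifts $V_1 = (p^{n_1}x, 1, p^{n_2}y)$ and $V_2 = (p^{m_1}a, 1, p^{m_2}b)$, the three diagonal entries $[e_i, e_i, P_1, P_2]$ equal $1$, while the six off-diagonal ones come out as products of a power of $p$ (with exponent in $\{\pm N, \pm M, \pm(N-M)\}$) and a unit in $\Z_p^*$---for instance $[e_1, e_2, P_1, P_2] = p^{-N}(a/x)$ and $[e_1, e_3, P_1, P_2] = p^{M-N}(ay/xb)$, and similarly for the three reciprocal pairs. A direct check on $|z-1|$ and $|z^{-1}-1|$ identifies $B_t$, for $t \geq 0$, with $\{z \in \Q_p^* : |v(z)| \leq t\}$, so an element $p^k u$ with $u \in \Z_p^*$ lies in $B_t$ iff $|k| \leq t$.

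If $(N, M) \neq (0, 0)$, set $V = \max\{N, M, -N, -M, N-M, M-N\} \geq 1$. Both valuations $\pm V$ occur among the cross-ratios, hence the smallest symmetric ball containing the whole set is exactly $B_V$; the formula $\mu(B_V) = V + 1$ from the proof of Lemma~\ref{lem:triangineq} then gives the announced distance. If instead $N = M = 0$, all cross-ratios already lie in $\Z_p^* = B_0$, so the smallest symmetric ball is contained in $B_0$ and the distance is at most $\mu(B_0) = 1$.

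For the ultrametric claim on balls of radius $1$, every smallest symmetric ball that arises has the form $B_{-s} = 1 + p^s \Z_p$ with $s \geq 0$. The identity $B_{-s} B_{-s'} = B_{-\min(s, s')}$ yields $\mu(B_{-s} B_{-s'}) = \max(\mu(B_{-s}), \mu(B_{-s'}))$, and combining this with the inclusion $B_\Omega(x,y) \subseteq B_\Omega(x,z) B_\Omega(z,y)$ used in the proof of Theorem~\ref{the:distance} upgrades the triangle inequality to an ultrametric one. The only real obstacle is the bookkeeping in the cross-ratio computation; everything else follows mechanically from the machinery of Section~3.
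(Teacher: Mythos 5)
Your computation is correct and is precisely the ``direct application of the definition'' that the paper offers as its entire proof: the identification of the six nontrivial cross-ratios with valuations $\pm N$, $\pm M$, $\pm(N-M)$, the recognition that the smallest symmetric ball containing them is $B_V$ (resp.\ is contained in $B_0=\Z_p^*$ when $N=M=0$), and the upgrade of the triangle inequality to an ultrametric one via $B_{-s}B_{-s'}=B_{-\min(s,s')}$ supply exactly the omitted details. The only external input is the value $\mu(B_V)=V+1$ taken from the proof of Lemma~\ref{lem:triangineq}, which you cite faithfully, so your argument is consistent with the paper's conventions throughout (any discrepancy between that formula and a direct coset count for $B_V$ is an issue internal to the paper, not to your proof).
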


The proof is a direct application of the definition. An interesting consequence
is the following one: with the notation above, one can define a map $\pi$ from
$\mathbf T_H$ to $\Z[j=e^{\frac{2i\pi}{3}}]$ by sending $[p^{n_1}x:1:p^{n_2}y]$
to $(n_1+n_2j)$. This map shrinks the balls $B_{n_1,n_2}$ of radius $1$ in
$\mathbf T_H$ to a point in $\Z[j]$. But if you equip $\Z[j]$ with the hexagonal
norm, the map sends two points of $\mathbf T_H$ at distance $d$ to two points in
$\Z[j]$ at distance $d-1$. The figure \ref{fig:triangle} shows the image of the
ball of center $[1:1:1]$ and of radius $2$. This exhibits a striking analogy
with the Hilbert distance on the triangle in the real case, which is isometric
to the hexagonal norm on the plane $\R^2$.

\begin{figure}[ht]
\begin{center} 
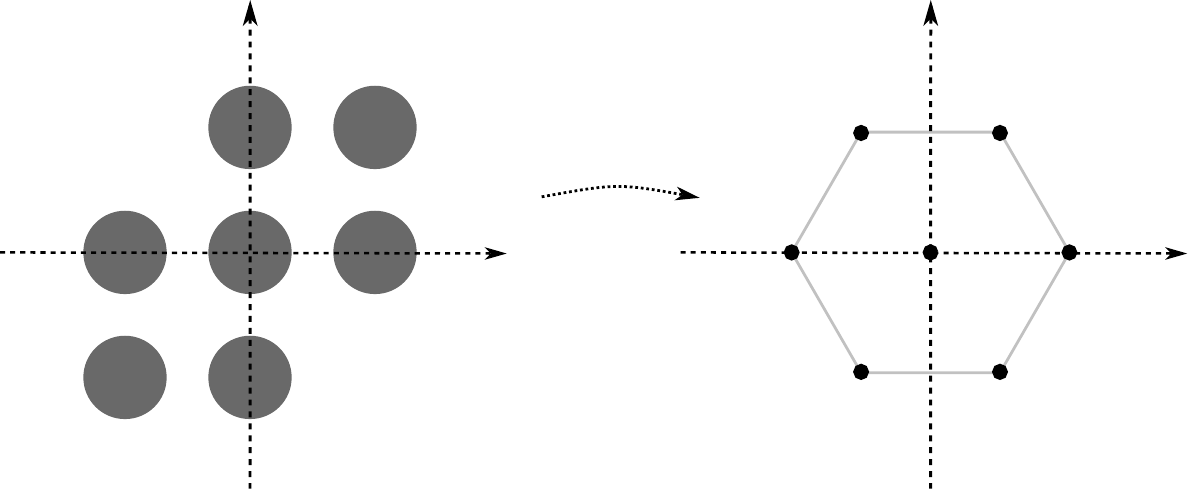
\end{center}
\caption{The Hilbert distance on the triangle}\label{fig:triangle}
\end{figure}

\section{Geometry of the hyperbolic discs}

The hyperbolic discs give nice examples of convex sets. The generalized Hilbert distance is defined and endow them with a geometry.  We will describe a bit this geometry: what are their duals and isometry groups. Then we will actually compute the Hilbert distance. 

Recall the setting of sections \ref{sec:hyperboloid} and \ref{sec:duality} : we
work in a non-archimedean local field $\bk$ of characteristic $\neq 2$,
${\bar\alpha}$ is a fixed class in $\bk^*/(\bk^*)^2$ which does not contain
$-1$. We associated to it a subgroup $K_{\bar\alpha}$ of index $2$ in
$\bk^*/(\bk^*)^2$. We studied the quadratic form $Q(x,y,z)=xz-y^2$, and called
$B$ its polar form. We have defined the isotropic semi-cone $\C_{\bar 1} \subset
\bk^3$ and we denoted $\C_{\bar\alpha}$ its projection in the
$K_{\bar\alpha}$-sphere\footnote{It is only a slight abuse of notation, as the
projections of the semi-cones $\C_{\bar 1}$ and $\C_{\bar\alpha}$ in the
$K_{\bar\alpha}$-sphere are the same.}. Thanks to $B$ we have an identification
between $\bk^3$ and its dual, and so between their $K_{\bar\alpha}$-spheres.
With the definitions of the previous section, the meaning of theorem
\ref{the:duality} is that the disc $\D_{\bar\alpha}$ is the convex
$(\C_{\bar\alpha})^\circ$.

\subsection{The duals}\label{sec:dualisom}

The description of the duals of the discs is required to compute the generalized
Hilbert distance.

\begin{proposition}
If $-1$ is a square in $\bk$, or if ${\bar\alpha}\neq 1$, then the dual $\D_{\bar\alpha}^\circ$ of $\D_{\bar\alpha}$ is exactly the semi-cone $\C_{\bar\alpha}$.

If $-1$ is not a square, and ${\bar\alpha}=1$, then the dual $\D_{\bar 1}^\circ$ is $\D_{\bar 1}\cup\C_{\bar 1}$.
\end{proposition}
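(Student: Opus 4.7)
My plan is to establish the set equality by two inclusions, handling the reverse one orbit-by-orbit in $\S_{\bar\alpha}$.

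For the easy inclusion $\C_{\bar\alpha}\subseteq\D_{\bar\alpha}^\circ$: by Theorem~\ref{the:duality}, $\D_{\bar\alpha}$ is the image in $\S_{\bar\alpha}$ of $\H_{\bar\alpha}=(\C_{\bar 1})^\circ$, so applying the formal biduality $\Omega\subseteq(\Omega^\circ)^\circ$ to $\Omega=\C_{\bar 1}$ yields $\C_{\bar 1}\subseteq\D_{\bar\alpha}^\circ$; the footnote identifying $\C_{\bar 1}$ and $\C_{\bar\alpha}$ in $\S_{\bar\alpha}$ turns this into the stated statement.

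For the reverse direction, I use that $\D_{\bar\alpha}^\circ$ is $K_{\bar\alpha}\Ad(\PSL(2,\bk))$-invariant, so I check each orbit in $\S_{\bar\alpha}$ separately. For an isotropic representative $w\in\C_{\bar\beta}$, the semi-cone is a single $\Ad(\PSL(2,\bk))$-orbit parametrized as $\{(\beta t^2,\pm\beta ts,\beta s^2):(s,t)\neq(0,0)\}$, and a direct calculation yields
\[
B(v_\alpha,w)=\beta(\alpha s^2+t^2).
\]
As $(s,t)$ varies, $\alpha s^2+t^2$ sweeps out $K_{\bar\alpha}$, so the pairing values fill the coset $\beta K_{\bar\alpha}$. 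Membership in $\D_{\bar\alpha}^\circ$ therefore forces $\beta\in K_{\bar\alpha}$, which in the sphere singles out exactly the class of $\C_{\bar 1}=\C_{\bar\alpha}$.

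For a non-isotropic orbit represented by $v_\gamma=(\gamma,0,1)$, the explicit adjoint action gives
\[
B(v_\alpha,\Ad(g)v_\gamma)=(q^2+\alpha s^2)+\gamma(p^2+\alpha r^2),\qquad g=\begin{pmatrix}p&q\\r&s\end{pmatrix}\in\SL(2,\bk).
\]
Specializations ($g=I$ and the antidiagonal swap) produce the values $\alpha+\gamma$ and $1+\alpha\gamma$; in the generic case a careful choice of $g$ exhibits a pairing outside $K_{\bar\alpha}$ and so excludes the orbit from the dual. In the exceptional case $\bar\alpha=\bar 1=\bar\gamma$ with $-1\notin(\bk^*)^2$, the formula reduces to $p^2+q^2+r^2+s^2$ constrained by $ps-qr=1$, and the four-square identity
\[
(p^2+q^2)(r^2+s^2)=(pr+qs)^2+(ps-qr)^2=(pr+qs)^2+1
\]
combined with the description of $K_{\bar 1}$ as the norm group $N(\bk[\sqrt{-1}]^*)$ allows one to verify that such sums do lie in $K_{\bar 1}$, so that the orbit of $v_1$ contributes the extra $\D_{\bar 1}$ summand to the dual.

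The main obstacle is precisely this last verification: the four-square identity directly controls the \emph{product} $(p^2+q^2)(r^2+s^2)$, exhibiting it as a norm from $\bk[\sqrt{-1}]$, but transferring this to a statement about the \emph{sum} $p^2+q^2+r^2+s^2$ under the determinantal constraint is delicate, and requires exploiting the multiplicative structure of the quadratic extension and the fact that the two constituent sums-of-two-squares each already lie in $K_{\bar 1}$.
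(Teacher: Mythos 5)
Your two pairing formulas are correct, and the isotropic half of your argument (showing that only the semi-cones $\C_{\bar\beta}$ with $\beta\in K_{\bar\alpha}$ can survive in the dual, which in $\S_{\bar\alpha}$ is exactly $\C_{\bar\alpha}$) is sound and close in spirit to the paper's computation $B((\alpha,0,1),(\beta,0,0))=\beta$. But both steps that carry the real content of the proposition are missing. For the exclusion of non-isotropic orbits you only assert that ``a careful choice of $g$'' produces a value outside $K_{\bar\alpha}$: the two specializations you exhibit give $\alpha+\gamma$ and $1+\alpha\gamma$, and there are square classes $\bar\gamma$ for which both of these lie in $K_{\bar\alpha}$, so an argument is still owed; moreover not every non-isotropic $K_{\bar\alpha}\Ad(\PSL(2,\bk))$-orbit is represented by a vector $(\gamma,0,1)$ (each level set of $Q$ splits into $2$ or $4$ such orbits by Lemma \ref{lem:preduality}), so the enumeration itself is incomplete. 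The paper replaces all of this by one soft argument: $\D_{\bar\alpha}^\circ$ is closed and invariant, the closure of any non-isotropic orbit meets the isotropic cone, and for $v\notin\D_{\bar\alpha}$ it meets a semi-cone already excluded.

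The more serious problem is the step you defer in the exceptional case: it is not merely delicate, it fails. Since $-1$ is a unit it is a norm from the unramified extension $\bk[\sqrt{-1}]$, so one can write $-1=a^2+c^2$, and reducing modulo the maximal ideal forces $a$ and $c$ to be units. The vector $u=(a,c,-a)$ satisfies $Q(u)=-a^2-c^2=1$, and $B(u,(t^2,\pm ts,s^2))=as^2-at^2\mp 2cts$ is a nondegenerate binary form of determinant $-(a^2+c^2)=1$ representing the unit $a$, hence equivalent to $a(t^2+s^2)$ and taking nonzero values exactly in $aK_{\bar 1}=K_{\bar 1}$; by Theorem \ref{the:duality} the class of $u$ therefore lies in $\D_{\bar 1}$. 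Yet $B(v_1,u)=-a+a=0\notin K_{\bar 1}$. In your coordinates this says that $p^2+q^2+r^2+s^2=0$ is attained subject to $ps-qr=1$ (take $z,w\in\bk[\sqrt{-1}]$ with $N(z)=-N(w)$ and adjust so that the imaginary part of $\bar z w$ is $1$), so the four-square sum does \emph{not} stay in $K_{\bar 1}$ and your exceptional case cannot be closed along these lines. The example in fact shows that $\D_{\bar 1}\not\subseteq\D_{\bar 1}^\circ$ under the literal definition of the dual: the inclusion only holds when the pairing is tested along long lines, which is what the paper's own (also rather elliptic) verification implicitly does when it normalizes the two points simultaneously to $(x^2,0,1)$ and $(y^2,0,1)$ --- a normalization available only when they span an isotropic plane. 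So before repairing your argument you should pin down whether the statement is meant with this restriction; as written, neither your computation nor a refinement of it can establish the second assertion.
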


\begin{proof}
 We know by theorem \ref{the:duality} that $\C_{\bar\alpha}$ is included in $\D_{\bar\alpha}^\circ$ and even that $\D_{\bar\alpha}=\C_{\bar\alpha}^\circ$. Moreover $\D_{\bar\alpha}^\circ$ does not intersect $\C_{\bar\beta}$ for $\beta\not\in K_{\bar\alpha}$: $$B((\alpha,0,1),(\beta,0,0))=\beta\not\in K_{\bar\alpha}.$$
 Now, take some $v$ outside  of the isotropic cone. If $v$ does not belong to
$\D_{\bar\alpha}$ we may use the action of $\PSL(2,\bk)$ to send it (at the
limit) in a semi-cone $\C_{\bar\beta}$ for $\bar\beta \neq {\bar\alpha}$ (the
isotropic cone is the limit set for the action of $\PSL(2,\bk)$ on
$\mathbb P(\bk^3)$). Hence $v$ does not belong to
$\D_{\bar\alpha}^\circ$.

If $v$ belongs to $\D_{\bar\alpha}$, up to the action of $\PSL(2,\bk)$ (and homothety), one may assume that $v=(\alpha x^2,0,1)$ for some $x\in \bk$. We then get that, for any $v'=(a,b,c)\in \D_{\bar\alpha}$:
$$B(v,v')=(\alpha x^2 c+a) \in K_{\bar\alpha} $$
Choose $v'=(\alpha y^2,0,1)$. Then, we get $B(v,v')=\alpha (x^2+y^2)$. This belongs to $K_{\bar\alpha}$ for any $y$ if and only if $-1$ is not a square, and ${\bar\alpha}=\bar 1$.

We conclude by the following: if $-1$ is not a square, and $\bar\alpha= \bar{1}$, take, $v$ and $w$ in $\D_{\bar{1}}$. Then up to the action of $K_{\bar 1}\Ad(\PSL(2,\bk))$, one may assume that $v=(x^2,0,1)$ and $w=(y^2,0,1)$ for some $x$, $y$ in $\bk$. And we have $B(v,w)=x^2+y^2\in K_{\bar{1}}$.
\end{proof}

The second situation described ($-1$ not a square, $\bar \alpha=\bar 1$) reminds us of the real case. The existence of the other ones shows the limits of the analogy. But in any case, the dual is big enough and $\D_{\bar\alpha}$ is a proper and open $K_{\bar\alpha}$-convex set:
\begin{cor}
 The hyperbolic disc $\D_{\bar\alpha}$ is a  proper and open $K_{\bar\alpha}$-convex set. It inherits a Hilbert distance $d_{\D_{\bar\alpha}}$.
\end{cor}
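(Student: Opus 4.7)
My plan is to verify directly the three hypotheses of Theorem \ref{the:distance} applied with $H=K_{\bar\alpha}$, namely that $\D_{\bar\alpha}$ is $K_{\bar\alpha}$-convex, proper, and open; the distance $d_{\D_{\bar\alpha}}$ is then immediate. The preceding proposition and Theorem \ref{the:duality} will furnish everything except a small piece of topology needed for openness.

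For convexity I would argue in a single line. The preceding proposition gives $\C_{\bar\alpha}\subseteq\D_{\bar\alpha}^\circ$, hence $(\D_{\bar\alpha}^\circ)^\circ\subseteq\C_{\bar\alpha}^\circ$; but via the polar form $B$ and the footnoted identification of the images of $\C_{\bar 1}$ and $\C_{\bar\alpha}$ in $\S_{\bar\alpha}$, Theorem \ref{the:duality} reads $\D_{\bar\alpha}=\C_{\bar\alpha}^\circ$, so $\D_{\bar\alpha}=(\D_{\bar\alpha}^\circ)^\circ$ together with the tautological reverse inclusion. For properness, since $\D_{\bar\alpha}^\circ\supseteq\C_{\bar\alpha}$, it is enough to find three linearly independent vectors in $\C_{\bar 1}$ and invoke non-degeneracy of $B$: the triple $(1,0,0)$, $(0,0,1)$, $(1,1,1)$ lies in $\C_{\bar 1}$ (each is isotropic with first and third coordinates in $(\bk^*)^2\cup\{0\}$) and spans $\bk^3$, so the common kernel $\bigcap_{w\in\C_{\bar 1}}\ker B(\cdot,w)$ already reduces to $\{0\}$.

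The only delicate point is openness. The starting observation is that $(\bk^*)^2$ is an open subgroup of $\bk^*$ for any non-archimedean local field of characteristic different from $2$, so $K_{\bar\alpha}$, being a finite union of cosets, is open in $\bk$. Consequently each preimage $\{v\in\bk^3 : B(v,w)\in K_{\bar\alpha}\}$ is open. To control the infinite intersection defining $\H_{\bar\alpha}$, I would exploit that $B(v,w)\in K_{\bar\alpha}$ depends on $w$ only through its $K_{\bar\alpha}$-class, and that the quotient $\C_{\bar 1}/K_{\bar\alpha}$ embeds in $\S_{\bar\alpha}$ as a finite cover of the projective isotropic conic of $\P_2(\bk)$, hence is compact. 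Continuity of $B$ combined with a finite subcover then yields a uniform open neighborhood of any $v_0\in\H_{\bar\alpha}$ still inside $\H_{\bar\alpha}$. A less hands-on variant: by Theorem \ref{the:duality} $\H_{\bar\alpha}$ is a union of orbits of the $4$-dimensional analytic group $K_{\bar\alpha}\Ad(\PSL(2,\bk))$, and the stabilizer of $v_\alpha$ is the $1$-dimensional anisotropic orthogonal group of $Q_{|v_\alpha^\perp}$, so the orbit has full dimension $3=\dim\bk^3$ and is open by the $p$-adic implicit function theorem. This compactness/orbit-dimension step is the only part I expect to require more than bookkeeping; everything else follows immediately from the results already in hand, after which Theorem \ref{the:distance} directly supplies the Hilbert distance $d_{\D_{\bar\alpha}}$.
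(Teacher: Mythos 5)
Your proposal is correct, and it is in fact more complete than what the paper offers: the corollary is stated there with no proof at all, as an immediate consequence of the description of the duals together with Theorem \ref{the:duality} (``the dual is big enough''). Your verification of convexity ($\D_{\bar\alpha}=\C_{\bar\alpha}^{\circ}$ forces $\D_{\bar\alpha}=(\D_{\bar\alpha}^{\circ})^{\circ}$ by the tautological inclusion and the order-reversal of duality) and of properness (the three independent isotropic vectors $(1,0,0)$, $(0,0,1)$, $(1,1,1)$ all lie in $\C_{\bar 1}$ and kill the common kernel by non-degeneracy of $B$) is exactly the intended reading. The genuine added value is the openness argument, which the paper silently skips even though $\H_{\bar\alpha}$ is defined by infinitely many open conditions: both of your variants work --- the compactness of the (closed, $K_{\bar\alpha}$-saturated) image of $\C_{\bar 1}$ in the compact sphere $\S_{\bar\alpha}$ yields a uniform neighbourhood by a finite subcover, and the orbit count $\dim K_{\bar\alpha}\Ad(\PSL(2,\bk))-\dim\Stab(v_\alpha)=4-1=3$ gives openness via the $p$-adic submersion theorem, consistently with Theorem \ref{the:duality} exhibiting $\H_{\bar\alpha}$ as a finite union of such orbits. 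Either one should be recorded; I see no gap.
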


We will effectively compute the distance in section \ref{sec:compdist}. Before that, let us try to describe a bit the geometry and isometries of these discs.

\subsection{Lines, short and long}

We have a natural notion of line in the disc $\D_{\bar\alpha}$. Consider a point $v\in \D_{\bar\alpha}$. Any (linear) plane in $\bk^3$ projects to a projective line in $\S_{\bar\alpha}$. We call \emph{line} in $\D_{\bar\alpha}$ the intersection of a line in $\S_{\bar\alpha}$ with $\D_{\bar\alpha}$.

Hence through two distinct points in $\D_{\bar\alpha}$, there is a unique line in $\D_{\bar\alpha}$. But there are two distinct kind of lines: short lines, which are compact in $\S_{\bar\alpha}$, and long lines, for which the closure intersects $\C_{\bar\alpha}$ in exactly two points.

Indeed, consider a point $v\in \D_{\bar\alpha}$. A line through $v$ is determined by a vector $w$ in $v^\perp$, such that the line is the projectivization of the plane $P_w$ generated by $v$ and $w$. Now we have seen that $Q$ restricted to $v^\perp$ is equivalent to the form $-\alpha x^2 -y^2$. Hence it takes values exactly in $-K_{\bar\alpha}$. So there are two cases:
\begin{itemize}
 \item If $Q(w)\in -\bar\alpha$, then $Q$ restricted to $P_w$ is isotropic and the projection of $P_w$ in $\S_{\bar\alpha}$ intersects $C_{\bar\alpha}$ in two points. We call it a \emph{long line}.
 \item If $Q(w)\not\in -\bar\alpha $, then $Q$ restricted to $P_w$ is anisotropic and the projection of $P_w$ in $\S_{\bar\alpha}$ does not intersect $C_{\bar\alpha}$. Then the projection of $P_w$ in $\S_{\bar\alpha}$ is the union of two disjoint compact sets: the points in $\D_{\bar\alpha}$ and its complementary. We call it a \emph{short line}.
\end{itemize}

For a line $l$ in $\D_{\bar\alpha}$, we note $P(l)$ the plane in $\bk^3$ such that $l$ is the projectivization of $P(l)$ (intersected with $\D_{\bar\alpha}$).
For two lines through a point $v$, there is a well-defined notion of orthogonality, thanks to the bilinear form $B$:
\begin{definition}
 Two lines $l_1$ and $l_2$ through a point $v$ in $\D_{\bar\alpha}$ are orthogonal if $P(l_1)\cap v^\perp$ and  $P(l_2)\cap v^\perp$ are orthogonal for $B$.
\end{definition}

One checks that if $-1$ is not a square and $\bar \alpha=\bar 1$, then the orthogonal of a long line is a long line.

\subsection{The projective isometry group}

We are now able to describe the group of projective isometries, and the
transitivity of its action. We will discuss later (see section \ref{sec:carac})
the existence of non-projective isometries.

\begin{proposition}\label{prop:transit}
The group $\Isom(\D_{\bar\alpha})$ of projective maps in $\GL(3,\bk)/K_{\bar\alpha}$ preserving $\D_{\bar\alpha}$ acts by isometries on $\D_{\bar\alpha}$. It is isomorphic to $\PGL(2,\bk)$, an element $g$ of $\PGL(2,\bk)$ acting by $\det(g)\Ad(g)$.

Its action is transitive on $\D_{\bar\alpha}\times \C_{\bar\alpha}$.

Its action is transitive on the sets of long lines, and of short lines.

Its action is transitive on the flags ``a point in a long line''.

Its action preserves orthogonality.
\end{proposition}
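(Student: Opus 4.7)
The plan is to exhibit an explicit isomorphism $\Phi : \PGL(2,\bk) \to \Isom(\D_{\bar\alpha})$ by $g \mapsto \det(g)\Ad(g)$, and then to read off the remaining statements from the orbit analyses already performed in Sections~\ref{sec:hyperboloid} and~\ref{sec:duality}. Well-definedness modulo $K_{\bar\alpha}$-scaling is immediate: replacing $g$ by $\mu g$ multiplies the output by $\mu^{2}\in(\bk^{*})^{2}\subset K_{\bar\alpha}$. Injectivity follows because any scalar element of $\SO(Q)$ in dimension three must be the identity (as $\det(-I)=-1$), so $\Ad(g)$ scalar forces $g$ central in $\GL(2,\bk)$ and therefore trivial in $\PGL(2,\bk)$. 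To check that $\Phi(g)$ preserves $\H_{\bar\alpha}$ I would use the identity
\[
B(\Phi(g)v,w)=\det(g)\,B(v,\Ad(g)^{-1}w),
\]
together with two ingredients from Section~\ref{sec:hyperboloid}: Proposition~\ref{prop:isotropic} gives $\Ad(g)^{-1}\C_{\bar 1}=\C_{\overline{\det(g)}^{-1}}$; and the homogeneity $\lambda\,\C_{\bar 1}=\C_{\bar\lambda}$ propagates the defining condition of $\H_{\bar\alpha}$ to $B(\H_{\bar\alpha},\C_{\bar\gamma})\subset \bar\gamma K_{\bar\alpha}$. The two factors then cancel modulo $K_{\bar\alpha}$, giving $B(\Phi(g)v,w)\in K_{\bar\alpha}$. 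That $\Phi(g)$ acts by isometries is automatic: it acts linearly on $V$ and contragrediently on $V^{*}$, hence preserves every cross-ratio $[\phi,\phi',x,y]$ and, with it, the Hilbert distance of Theorem~\ref{the:distance}.

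For surjectivity, let $M\in\GL(3,\bk)$ induce an isometry of $\D_{\bar\alpha}$. Since duality is involutive, $M$ preserves $\D_{\bar\alpha}^{\circ}$, which by Section~\ref{sec:dualisom} contains the semi-cone $\C_{\bar\alpha}$. The preimage of $\C_{\bar\alpha}$ in $\bk^{3}\setminus\{0\}$ is a $p$-adic open subset of the isotropic cone $\C$; since $\C$ is a Zariski-irreducible plane conic, this open subset is Zariski-dense, and linearity forces $M$ to preserve $\C$ itself. Two non-degenerate ternary quadratic forms vanishing on the same irreducible conic are proportional, so $Q\circ M=\mu Q$ for some $\mu\in\bk^{*}$. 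The identity $\det(M)^{2}=\mu^{3}$, valid in dimension three, forces $\mu\in(\bk^{*})^{2}$ and hence $M\in\bk^{*}\cdot\SO(Q)$. Writing $M=\lambda'\Ad(g)$ and rerunning the computation above, $M$ preserves $\H_{\bar\alpha}$ itself (and not merely the level set $Q^{-1}(\bar\alpha)$) if and only if $\lambda'/\det(g)\in K_{\bar\alpha}$, i.e.\ $M\equiv\Phi(g)$ in $\GL(3,\bk)/K_{\bar\alpha}$.

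With the isomorphism at hand the rest unfolds quickly. Theorem~\ref{the:duality} writes $\D_{\bar\alpha}$ as one or two $K_{\bar\alpha}\Ad(\PSL(2,\bk))$-orbits, and $\Phi(\mathrm{diag}(\alpha,1))$ descends to the class of $d_{\alpha}$, which glues the two, whence transitivity on $\D_{\bar\alpha}$. The stabilizer of $v_{\alpha}$ contains through $\Ad$ the orthogonal group of $Q|_{v_{\alpha}^{\perp}}$; Witt's theorem applied as in Section~\ref{sec:hyperboloid}, together with $K_{\bar\alpha}$-scaling, yields transitivity on $\C_{\bar\alpha}$ and hence on $\D_{\bar\alpha}\times\C_{\bar\alpha}$. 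A line through $v$ is given by a direction $\langle w\rangle\subset v^{\perp}$, and is long or short according to the class of $Q(w)$ modulo squares; since $\Phi(g)$ scales $Q$ by the square $\det(g)^{2}$, this class is preserved, and Witt again gives transitivity within each type and on point/long-line flags. Orthogonality of two lines is $B$-orthogonality on $v^{\perp}$, and $\Phi(g)$ scales $B$ by the single factor $\det(g)$, so orthogonality is preserved. The hard step is the surjectivity argument, precisely in upgrading ``$M$ preserves $\C_{\bar\alpha}$'' to ``$M$ preserves the whole isotropic cone'': the real-case connectedness proof is unavailable, and one has to route through $p$-adic openness feeding into Zariski density of an irreducible conic before invoking the rigidity of ternary quadrics.
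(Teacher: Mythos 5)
Your proof is correct and, for the identification of $\Isom(\D_{\bar\alpha})$ with $\PGL(2,\bk)$, follows the same strategy as the paper (preserving the disc forces preserving the dual, hence the cone, hence membership in $\bk^*\SO(Q)$, and the $K_{\bar\alpha}$-condition pins down the twist by $\det$), but you execute two steps differently. First, where the paper simply asserts that a linear map preserving $\C_{\bar\alpha}$ preserves the whole isotropic cone and then says ``one easily sees'' the rest, you supply an actual argument: $p$-adic openness of the preimage of $\C_{\bar\alpha}$ inside the Zariski-irreducible cone, rigidity of ternary quadrics vanishing on a common conic, and the identity $\det(M)^2=\mu^3$ forcing $\mu$ to be a square. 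This genuinely fills a gap the paper leaves open and is the right substitute for the real connectedness argument. Second, for transitivity on $\D_{\bar\alpha}\times\C_{\bar\alpha}$ you factor the problem the opposite way: you get transitivity on $\D_{\bar\alpha}$ from Theorem \ref{the:duality} plus $d_\alpha$ and then let $\Stab(v_\alpha)$ act on $\C_{\bar\alpha}$ via Witt's theorem, whereas the paper first proves transitivity on $\C_{\bar\alpha}$ by explicit lower-triangular matrices and then lets the parabolic stabilizer of a cone point sweep out $\D_{\bar\alpha}$. Both factorizations are valid; the paper's explicit matrices make the membership of the moving elements in $\Isom(\D_{\bar\alpha})$ visible for free, while your Witt-based version requires the (true, but worth a sentence) check that the isometry produced can be corrected by a reflection to lie in $\SO(Q)$ and that its $\det$ lands in $K_{\bar\alpha}$, so that it really descends to the $K_{\bar\alpha}$-sphere. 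Two cosmetic points: $\Phi(g)$ scales $B$ by $\det(g)^2$, not $\det(g)$ (harmless for orthogonality, which only needs the scaling to be nonzero); and in the surjectivity step one should strictly speak of the contragredient of $M$ preserving $\D_{\bar\alpha}^{\circ}$ before transporting back through $B$, as the paper's identification of $\bk^3$ with its dual allows.
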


\begin{remark}
 From now on, the action of $\PGL(2,\bk)$ on $\D_{\bar \alpha}$ will always be the one described above.
\end{remark}

\begin{proof}
The first part is classical in Hilbert geometry: a projective transformation preserves $\D_{\bar\alpha}$ if and only if it preserves its dual. Hence it preserves the Hilbert distance defined. Moreover, an element of $\GL(3,\bk)/K_{\bar\alpha}$ which preserves $\C_{\bar\alpha}$ preserves the isotropic cone of $Q$. So it belongs to the projective orthogonal group $\PO(Q)$. As it shall preserve $\D_{\bar\alpha}$, one easily sees that it is the group described.

Now the stabilizer of $\C_{\bar\alpha}$ is generated by $\Ad(\PSL(2,\bk))$ and $d_\alpha$.
 Fix the point $K_{\bar\alpha}.(1,0,0)$ in $\C_{\bar\alpha}$. Its stabilizer is generated by $d_\alpha$ and $\Ad(P)$ where $P$ is the parabolic subgroup $\begin{pmatrix} x & y \\ 0 &x^{-1}\end{pmatrix}$. Moreover any point in $\C_{\bar\alpha}$, represented by a triple $(x^2,xy,y^2)$ is the image of $(1,0,0)$ under $\Ad\begin{pmatrix} x & 0\\ \frac{y}{2} & x^{-1} \end{pmatrix}$, proving the transitivity of the action on $\C_{\bar\alpha}$.
Remark that any point in $\D_{\bar\alpha}$ is represented by a triple $(\alpha c^{-2}+c^{-2}b^2,cb,c^2)$ (up to the action of $K_{\bar\alpha}$). By the action of $\Ad(P)$ (namely of $\Ad\begin{pmatrix} c & b\\ 0 & c^{-1} \end{pmatrix}$), this point is sent to $v_\alpha=(\alpha,0,1)$. This proves the first transitivity claimed.

Now fix a short line $l$ (resp a long line $L$) through $v_\alpha$. Take another short  line $l'$ (resp. long line $L'$). Using the transitivity of $\Isom(\D_{\bar\alpha})$ on $\D_{\bar\alpha}$, we send $l'$ (resp $L'$) on a short (resp. long) line through $v_\alpha$. Eventually the stabilizer of $v$ acts transitively on the set of directions $<w>$ in $v^\perp$ such that $Q(w) \not \in -\bar\alpha$ (resp. $Q(w) \in -\bar\alpha$). So you may send $l'$ to $l$, and $L'$ to $L$.

The stabilizer of the long line $\{(x,0,y),x,y\in K_{\bar\alpha}\}$ acts
transitively on this line (indeed, it contains all the diagonal matrices with
entries in $K_{\bar\alpha}$).  Using the transitivity on the long lines, we get
the transitivity on the flags.

The last point is straightforward.
\end{proof}

We may describe more precisely the action of $\PGL(2,\bk)$ on $\C_{\bar \alpha}$:
\begin{fact}
 The action of $\PGL(2,\bk)$ on $\C_{\bar \alpha}$ is isomorphic to its projective action on $\P_1(\bk)$ via the bijection:
$$K_{\bar\alpha}(x^2,xy,y^2) \mapsto \bk(x,y).$$
\end{fact}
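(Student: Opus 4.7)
The statement has two parts: bijectivity and $\PGL(2,\bk)$-equivariance of the map $\psi:\C_{\bar\alpha}\to\P_1(\bk)$ sending $K_{\bar\alpha}(x^2,xy,y^2)$ to $\bk(x,y)$. I would handle them separately, reducing equivariance to a base-point check already performed in the proof of Proposition~\ref{prop:transit}.

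For bijectivity, well-definedness is immediate since $(\mu x,\mu y)$ produces the triple $\mu^2(x^2,xy,y^2)$ with $\mu^2\in(\bk^*)^2\subset K_{\bar\alpha}$. Surjectivity follows from the definition of $\C_{\bar 1}$: a non-zero $(a,b,c)$ with $ac=b^2$ and $a,c\in(\bk^*)^2\cup\{0\}$ can be written $a=x^2$, $c=y^2$, $b=\pm xy$, with the sign absorbed into the sign of $x$. Injectivity is equally easy: if $(x'^2,x'y',y'^2)=\lambda(x^2,xy,y^2)$ for some $\lambda\in K_{\bar\alpha}$, any non-zero outer coordinate forces $\lambda$ to be a square $\mu^2$; the middle coordinate then forces $(x',y')=\pm\mu(x,y)$, determining the same projective line.

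For equivariance, I would use the identification $\bk^3\cong\mathrm{Sym}^2(\bk^2)$ via $(a,b,c)\leftrightarrow\begin{pmatrix}a & b\\ b & c\end{pmatrix}$. Under this identification $(x^2,xy,y^2)$ is the rank-one matrix $uu^T$ with $u=(x,y)^T$, and the natural $\GL(2,\bk)$-action $S\mapsto gSg^T$ sends $uu^T$ to $(gu)(gu)^T$ by one line of matrix algebra --- precisely the equivariance $\psi^{-1}(g\cdot\bk(u))=g\cdot\psi^{-1}(\bk(u))$ needed, provided $S\mapsto gSg^T$ on $\mathrm{Sym}^2(\bk^2)$ agrees with $\det(g)\Ad(g)$ on $\bk^3\cong\mathfrak{sl}_2$. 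This agreement is the standard isomorphism of $\PGL(2,\bk)$-modules $\mathrm{Sym}^2(\bk^2)\cong\mathfrak{sl}_2$, twisted by $\det$ because scalar matrices $\lambda I$ act by $\lambda^2$ on $\mathrm{Sym}^2$ but trivially via $\Ad$. The proof of Proposition~\ref{prop:transit} already verifies a concrete instance, namely that $\Ad\bigl(\begin{smallmatrix}x & 0\\ y/2 & x^{-1}\end{smallmatrix}\bigr)(1,0,0)=(x^2,xy,y^2)$; combined with the transitivity of both actions proven there, this pins down equivariance on all of $\P_1(\bk)$.

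The main technical care is in reconciling conventions --- the identification $\bk^3\cong\mathfrak{sl}_2$, the $\det$-twist, and the descent from $\bk^3\setminus\{0\}$ to $K_{\bar\alpha}$-classes --- but none of these is a serious obstacle; they amount to elementary matrix computation. The real content of the fact is simply that the Veronese embedding $\P_1\to\P_2$ is $\PGL(2,\bk)$-equivariant, pulled back to the $K_{\bar\alpha}$-sphere.
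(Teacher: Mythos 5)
Your proof is correct and complete. Note that the paper states this Fact with no proof at all, so there is nothing to compare against line by line; your argument via the Veronese embedding (rank-one symmetric matrices $uu^{T}$ and the action $S\mapsto gSg^{T}$, identified with $\det(g)\Ad(g)$ through the $\det$-twisted isomorphism $\mathrm{Sym}^2(\bk^2)\cong\mathfrak{sl}_2$) is exactly the standard mechanism the author is implicitly invoking, and your bijectivity checks are the right elementary ones. The only point to watch is the normalization wrinkle you yourself flag: with the matrix $g=\bigl(\begin{smallmatrix}x & 0\\ y/2 & x^{-1}\end{smallmatrix}\bigr)$ from Proposition~\ref{prop:transit} one has $gu_0=(x,y/2)^{T}$ for $u_0=(1,0)^{T}$, so $(gu_0)(gu_0)^{T}$ is the Veronese point of $(x,y/2)$ rather than of $(x,y)$; hence the paper's implicit identification of $\bk^3$ with $\mathrm{Sym}^2(\bk^2)$ differs from yours by a diagonal conjugation. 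This only composes the transported action on $\P_1(\bk)$ with an inner automorphism of $\PGL(2,\bk)$, so it is harmless for the Fact as stated (``isomorphic to the projective action''), but it means your concrete cross-check against Proposition~\ref{prop:transit} does not literally close without fixing that diagonal change of coordinates once and for all.
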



\subsection{The Hilbert distance on the discs}\label{sec:compdist}

Of course it is possible to effectively compute the distance. We only prove the result when $\bk=\Q_p$ with $p\neq 2$ and $\bar\alpha$ has even valuation. 
Here we choose the Haar measure on $\Q_p$ such that $\mu(\Z_p^*)=1$ and fix $\alpha \in \bar\alpha$ of valuation $0$. Thanks to the transitivity of the isometry group, we just have to compute two distances: first the distance between the point $v_\alpha=(\alpha,0,1)$ and a point $v_1=(\alpha x^2,0,1)$ (for which $v_\alpha$ and $v_1$ define a long line), second the distance between $v_\alpha$ and some $v_2= ((1-ay)\alpha,1,1+ay)$, where $a\in \Z_p^*$ is such that $1+\alpha a^2$ does not belong to $-\bar\alpha$. Indeed in the second case, one checks that $v_\alpha$ and $v_2$ define a short line.
\begin{proposition}\label{prop:calcdist} Assume $p\neq 2$ and $\bar\alpha$ has even valuation.

\emph{First case :} The Hilbert distance $d_{\D_{\bar\alpha}}$ between the points $v_\alpha=(\alpha,0,1)$ and $v_1=(\alpha x^2 , 0,1)$ on a long line is given by
\begin{itemize}
\item $d_{\D_{\bar\alpha}}(v_\alpha,v_1)=2n+1$, if $\max(|x^2-1|,|x^{-2}-1|)=p^{2n}\geq 1$.
\item $d_{\D_{\bar\alpha}}(v_\alpha,v_1)=\frac{1}{p-1}p^{n+1}$, if $\max(|x^2-1|,|x^{-2}-1|)=p^n<1$.
\end{itemize}

\emph{Second case :} The Hilbert distance $d_{\D_{\bar\alpha}}$ between the points $v_\alpha=(\alpha,0,1)$ and $v_2=((1-ay)\alpha,y,(1+ay))$ on a short line is always lesser than one. If $|y|=1$, it is $1$. If $|y|=p^n<1$, it is given by $\frac{1}{p-1}p^{n+1}$.
\end{proposition}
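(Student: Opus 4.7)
The plan is to use duality to rewrite the cross-ratio set as the image of an explicit rational function on $\P^1(\bk)$, and then bound that image by an ultrametric argument resting on the fact that $-1/\alpha$ is a non-residue modulo $p$.

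\emph{Step 1 (setup via duality).} By the description of the dual in Section~\ref{sec:dualisom}, $\D_{\bar\alpha}^\circ$ contains $\C_{\bar\alpha}$ (with equality except in the exceptional case $\bar\alpha=\bar 1$, $-1\notin(\bk^*)^2$, where it also contains $\D_{\bar 1}$). Through $B$ the points of $\C_{\bar\alpha}$ are parametrized, modulo $K_{\bar\alpha}$, by $w_{s:t}=(s^2,st,t^2)$ with $(s:t)\in\P^1(\bk)$. The cross ratio in the definition of $d_{\D_{\bar\alpha}}$ factors as
$$[w,w',v_\alpha,v_i]=\frac{B(w,v_i)}{B(w,v_\alpha)}\cdot\frac{B(w',v_\alpha)}{B(w',v_i)},$$
so it suffices to describe the image of $\Phi_i(s:t):=B(w_{s:t},v_i)/B(w_{s:t},v_\alpha)$ and then form all quotients. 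The engine of what follows is the identity
$$|1+\alpha z^2|=\max(1,|z|^2)\quad\text{for every }z\in\bk,$$
which holds because $\bar\alpha$ has even valuation and $-1\notin\bar\alpha$, so $-1/\alpha$ reduces to a non-residue modulo $p$, while $z^2$ reduces to a residue or zero.

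\emph{Step 2 (long line).} I compute $B(w_{s:t},v_\alpha)=(s^2+\alpha t^2)/2$ and $B(w_{s:t},v_1)=(s^2+\alpha x^2 t^2)/2$, so $\Phi_1(z)=(1+\alpha x^2z^2)/(1+\alpha z^2)$ with $z=t/s$. Direct algebra yields
$$\frac{\Phi_1(z_1)}{\Phi_1(z_2)}-1=\frac{\alpha(x^2-1)(z_1^2-z_2^2)}{(1+\alpha z_1^2)(1+\alpha x^2 z_2^2)}.$$
Applying the key estimate to both factors of the denominator and splitting into the cases $|x|<1$, $|x|=1$, $|x|>1$, the modulus of this expression (and of its inverse) is bounded by $M:=\max(|x^2-1|,|x^{-2}-1|)$. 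The bound is realized at $(z_1,z_2)=(\infty,0)$, yielding cross ratio $x^2$. Thus the smallest symmetric ball containing the cross-ratio set is the one of ``radius'' $M$, and the formula for $\mu(B_t)$ in the proof of Lemma~\ref{lem:triangineq} gives $2n+1$ if $M=p^{2n}\geq1$, and $p^{n+1}/(p-1)$ if $M=p^n<1$.

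\emph{Step 3 (short line).} A parallel computation gives $\Phi_2(z)=1+y\cdot g(z)$ with $g(z)=[a(1-\alpha z^2)-2z]/(1+\alpha z^2)$. The key estimate in the denominator and $|a|=|2|=1$ in the numerator yield $|g(z)|\leq1$ uniformly. If $|y|=p^n<1$, both numerator and denominator of the cross ratio sit in $1+p\mathcal{O}$, and $|\mathrm{ratio}-1|=|y|\cdot|g(z_1)-g(z_2)|$; taking $(z_1,z_2)=(\infty,0)$ gives $g(z_1)=-a,\;g(z_2)=a$, hence the supremum $|y|$ is attained, placing the smallest ball at $B_n$ of measure $p^{n+1}/(p-1)$. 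If $|y|=1$, the same extremal choice produces the cross ratio $(1-ay)/(1+ay)$, which (via $|2|=1$ and the short-line hypothesis on $a,y$) is a unit escaping $1+p\mathcal O$; the smallest ball is $B_0$ of measure $1$.

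\emph{Main obstacle.} The delicate points are (i) verifying tightness of the upper bound in Step~2, which requires the case analysis to show each of $|x|<1$, $|x|=1$, $|x|>1$ actually produces $M$ and not a smaller value, and checking the analogous attainment in Step~3; and (ii) in the exceptional case $\bar\alpha=\bar 1$, $-1\notin(\bk^*)^2$, confirming that the additional forms in $\D_{\bar 1}\subset\D_{\bar\alpha}^\circ$ contribute only cross ratios already contained in the symmetric ball computed from $\C_{\bar\alpha}$, so that the smallest ball is unchanged.
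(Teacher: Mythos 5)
Your proposal follows essentially the same route as the paper's own sketch: parametrize the dual by the isotropic semi-cone, write the cross-ratios as explicit rational functions of $(z_1,z_2)$, and use the estimate $|1+\alpha z^2|=\max(1,|z|^2)$ (the paper's ``no simplification between a square and $\alpha$ times a square'') to show the set of cross-ratios fills exactly the symmetric ball determined by the extremal pair $w=(1,0,0)$, $w'=(0,0,1)$, whose measure is then read off. The computations check out, and you are in fact more explicit than the paper about the two loose ends you flag --- tightness of the upper bound and the contribution of the extra dual points $\D_{\bar 1}\subset\D_{\bar 1}^\circ$ when $\bar\alpha=\bar 1$ and $-1$ is not a square --- which the paper's proof passes over silently.
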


\begin{remark}
 If $\bar \alpha $ has an odd valuation, one checks that the only modifications needed are the following. In the first case (long line), if $\max(|x^2-1|,|x^{-2}-1|)=p^{2n}\geq 1$, then one gets $d_{\D_{\bar\alpha}}(v_\alpha,v_1)=2n+\frac{1}{2}$. In the second case (short line), if $|y|=1$, the distance becomes $\frac{1}{2}$.
\end{remark}

\begin{proof}
We sketch the computation.

 \emph{First case:} Fix $w=(a,b,c)$ and $w'=(a',b',c')$ in the dual $\D_{\bar\alpha}^\circ$, choosing $a$, $a'$, $c$ and $c'$ squares in $\Z_p$. We want to evaluate the cross-ratio:
$$\frac{B(w,v_1)}{B(w,v_\alpha)}\frac{B(w',v_\alpha)}{B(w,v_1)}=\frac{\alpha c x^2+a}{\alpha c +a}\frac{\alpha c' +a'}{\alpha c' x^2 +a'}.$$
Taking $w=(1,0,0)$ and $w'=(0,0,1)$, the cross-ratio takes the value $x^2$. Permuting $w$ and $w'$, it equals $x^{-2}$. Let us show that all the cross-ratios belong to the smallest symmetric ball containing $x^2$.

Suppose first that $|x^2|=p^{2n}\geq 1$. It is easily seen that the first ratio has a norm between $1$ and  $p^{2n}$ (recall that there is no simplification between a square and $\alpha$ times a square). The second ratio has a norm between $p^{-2n}$ and $1$. Hence the cross-ratio has a norm between  $p^{-2n}$ and $p^{2n}$. This means it belongs to the symmetric ball in $K_{\bar\alpha}$ containing $x^2$ and $x^{-2}$. And this symmetric ball is the union: $$\cup_{-n \leq k \leq n} p^{2k}\Z_p^*.$$ It has the stated measure.

If we have $|x^2-1|=p^{n}<1$, i.e. $x^2=1+x'$, then we may rewrite the cross-ratio:
$$\frac{B(w,v_1)}{B(w,v_\alpha)}\frac{B(w',v_\alpha)}{B(w,v_1)}=\frac{1+x'\frac{\alpha c}{\alpha c +a}}{1+x'\frac{\alpha c'}{\alpha c' +a'}}.$$ It is closer to $1$ than $x^2$, so belongs to the smallest symmetric ball containing $x^2$, i.e. to $1+p^{n} \Z_p$. It has measure $\frac{1}{p-1}p^{n+1}$.

\emph{Second case:} first of all, we have $Q(v_2)=\alpha - (1+\alpha a^2) y$. As it belongs to $\bar\alpha$ and we supposed that $-(1+\alpha a^2)$ does not belong to $\bar\alpha$, it implies that $|y|\leq 1$. Fix $w=(d,e,f)$ and $w'=(d',e',f')$ in $\D_{\bar\alpha}^\circ$. We compute the cross-ratio:
$$\frac{B(w,v_2)}{B(w,v_\alpha)}\frac{B(w',v_\alpha)}{B(w,v_2)}=\frac{1-ay(1+\frac{2e}{a(\alpha f+d)})}{1-ay(1+\frac{2e'}{a(\alpha f'+d')})}.$$
One sees that the smallest symmetric ball containing these cross-ratios is $\Z_p^*$ if $|y|=1$, else $1+|y|\Z_p$.
\end{proof}

A remark on this proposition: the values taken by $d_{\D_{\bar\alpha}}$ are $\frac{1}{p-1}p^{n+1}$ for negative $n$, and $2n+1$ for positive $n$ (or $2n+\frac{1}{2}$). It proves that this distance is ultrametric at distance less than one. We will see that at large scales, it is not any more ultrametric. 
We define the ultrametric locus around a point:
\begin{definition}
For some point $\omega$ in $\D_{\bar\alpha}$, we denote by $\mathcal U(\omega)$
its ultrametric neighbourhood: $$\mathcal U(\omega)=\{\omega'\textrm{ such that
}d_{\D_{\bar\alpha}}(\omega,\omega')\leq 1\}.$$ 
\end{definition}

Remarky that, for two points in a long line, we have a much more precise notion than the distance:
\begin{definition}\label{def:multdist}
 Let $v$ and $v'$ be two points in $\D_{\bar\alpha}$ defining a long line $L$. Let $w$ and $w'$ be the two intersection points between $L$ and the isotropic semi-cone, $\phi=B(w,.)$ and $\phi'=B(w',.)$.

We denote by $D_{\bar\alpha}(v,v')$ the set $\left\{ [\phi,\phi',v,v'] ,\,
[\phi',\phi,v,v']\right\}$, and call it the multiplicative (Hilbert) distance
between $v$ and $v'$. 
\end{definition}

The Hilbert distance $d_{\bar\alpha}$ is a function of $D_{\bar\alpha}$, justifying the name of multiplicative distance:
\begin{fact}
 Let $v$ and $v'$ be two points in $\D_{\bar\alpha}$ defining a long line. Let $\{x,x^{-1}\}=D_{\bar\alpha}(v,v')$. Let $p^n=\max\{|x-1|,|x^{-1}-1|\}$.

Then $d_{\bar\alpha}$ is given by $n+1$ if $n\geq 0$, else by $\frac{1}{p-1}p^{n+1}$.
\end{fact}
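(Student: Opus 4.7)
The plan is to reduce the statement, via transitivity, to the configuration already handled in Proposition \ref{prop:calcdist}, and then match the two formulas term by term.

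First I would use Proposition \ref{prop:transit}: the projective isometry group acts transitively on flags ``a point in a long line''. Both the Hilbert distance $d_{\bar\alpha}$ and the multiplicative distance $D_{\bar\alpha}$ are invariant under this action---$d_{\bar\alpha}$ by construction, and $D_{\bar\alpha}$ because cross-ratios are preserved by the joint $\GL$-action on $V$ and $V^*$, and the defining set is symmetric under swapping $\phi,\phi'$. So one may assume that $v=v_\alpha=(\alpha,0,1)$ and that the long line $L$ through $v,v'$ is the standard one $L_0$, the projectivization of the plane $\{(a,0,c):a,c\in\bk\}$. Rescaling $v'$ within its $K_{\bar\alpha}$-class brings it to the form $v'=(\alpha y^2,0,1)$ for some $y\in\bk^*$.

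Next I would identify the ideal endpoints: $L_0\cap \C_{\bar\alpha}$ consists of the $K_{\bar\alpha}$-classes of $w_1=(1,0,0)$ and $w_2=(0,0,1)$. The associated forms $\phi_i=B(w_i,\cdot)$ send $(a,b,c)$ to $c$ and $a$ respectively, so
\[
[\phi_1,\phi_2,v_\alpha,v']=\frac{\phi_1(v')}{\phi_1(v_\alpha)}\cdot\frac{\phi_2(v_\alpha)}{\phi_2(v')}=\frac{1}{1}\cdot\frac{\alpha}{\alpha y^2}=y^{-2}.
\]
Hence $D_{\bar\alpha}(v,v')=\{y^2,y^{-2}\}$, and the quantity $p^n=\max(|x-1|,|x^{-1}-1|)$ of the statement coincides with $\max(|y^2-1|,|y^{-2}-1|)$---precisely the control parameter appearing in Proposition \ref{prop:calcdist}, first case.

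Finally I would translate the two clauses of that proposition. If $p^n=p^{2m}\geq 1$, then $n=2m\geq 0$ and Proposition \ref{prop:calcdist} gives $d_{\bar\alpha}(v,v')=2m+1=n+1$. If $p^n=p^m<1$, then $n=m<0$ and it gives $d_{\bar\alpha}(v,v')=\tfrac{1}{p-1}p^{m+1}=\tfrac{1}{p-1}p^{n+1}$. These match the claimed formula exactly. The only minor point requiring care is verifying that $D_{\bar\alpha}(v,v')$ is independent of the representatives of $v,v',w_1,w_2$ in their $K_{\bar\alpha}$-classes; this is immediate from the bilinearity of $B$, since $[\phi_1,\phi_2,v,v']$ is invariant under independent rescalings of each of its four arguments. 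There is no serious obstacle here: the fact is essentially a rereading of Proposition \ref{prop:calcdist} in the language of the multiplicative distance.
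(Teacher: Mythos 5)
Your proposal is correct and follows exactly the route the paper intends: the Fact is stated as an immediate consequence of Proposition \ref{prop:calcdist}, obtained by reducing to the standard configuration $v_\alpha=(\alpha,0,1)$, $v'=(\alpha y^2,0,1)$ via the transitivity of Proposition \ref{prop:transit} and observing that the endpoint forms give $D_{\bar\alpha}(v,v')=\{y^2,y^{-2}\}$. The only caveat, which the paper also leaves implicit, is that the formula as stated holds under the section's standing hypothesis that $\bar\alpha$ has even valuation (the odd-valuation case is modified as in the remark following Proposition \ref{prop:calcdist}).
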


\section{Links with the tree}\label{sec:tree}

We clarify here the links between the hyperbolic disc associated to an ${\bar\alpha}$ of even valuation in $\Q_p$ with $p\neq 2$ and the more classical tree $\T$ of $\PSL(2,\Q_p)$. We will not treat the case of a general non-archimedean local field in order to avoid heavy notations. However, it should be clear that the same phenomenon occurs in this more general case. The situation is a bit different for the discs associated to an ${\bar\alpha}$ of odd valuation. Let us recall briefly that the tree may be defined as follows \cite{serre}:
\begin{itemize}
 \item The vertices are the orders (up to isometry) in $\Q_p^2$, i.e. the free $\Z_p$-modules of rank $2$. It is also the set $\PGL(2,\Q_p)/\PGL(2,\Z_p)$.
 \item Two orders are linked by an edge if they are of index $p$ one in the other.
\end{itemize}
The action of $\PSL(2,\Q_p)$ has two distinct orbits: the orbit of $\Z_p^2$ and
the orbit of $\Z_p\cdot (1,0)\oplus\Z_p\cdot (0,p)$. Two linked vertices belongs
to different orbits. One sees that this graph is a complete
$p+1$-tree\footnote{Let us also mention that one may interpret the whole tree
(edges included) as the set of norms on $\Q_p^2$ \cite{goldmaniwahori}.}.
Its boundary at infinity is naturally identified with $\P_1(\Q_p)$. Two distinct
points in $\P_1(\Q_p)$ represented by vectors $v_1$ and $v_2$ define a
unique geodesic in the tree, composed by the orders of the form $\Z_p \cdot
xv_1\oplus \Z_p\cdot yv_2$.

The hyperbolic disc $\D_{\bar\alpha}$ and the tree are both homogeneous sets
under $\PGL(2,\Q_p)$. The normalizer of a point in $\D_{\bar\alpha}$ is a
compact subgroup, so it is included in a conjugate of $\PGL(2,\Z_p)$. We will
check (by a tedious computation, unfortunately) that it is included in only one
maximal compact subgroup. This holds because we supposed ${\bar\alpha}$ has even
valuation. Hence, we have a well-defined and natural map from $\D_{\bar\alpha}$
to $\T$ which turns out to be the collapse of each ultrametric locus on a vertex
in the tree. This map is a covariant quasi-isometry:

\begin{theorem}
 For any $v\in \D_{\bar\alpha}$, there is a unique point $p:=\pi_{\bar\alpha}(v)\in \T$ such that the stabilizer of 
$p$ in $\PGL(2,\Q_p)$ contains the  stabilizer of $v$ in $\PGL(2,\Q_p)$.
The projection $\pi_{\bar\alpha}\: : \: \D_{\bar\alpha}\to \T$ defines a
quasi-isometry covariant for the action of $\PGL(2,\Q_p)$.

Moreover, $\pi_{\bar\alpha}$ induces a bijection between the set of long lines
in $\D_{\bar\alpha}$ and the set of geodesics in $\T$.
\end{theorem}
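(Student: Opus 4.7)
The stabilizer $S_v := \Stab_{\PGL(2,\Q_p)}(v)$ preserves both $v$ and the quadratic form $Q$, hence embeds into the orthogonal group of $Q$ restricted to $v^\perp$. That restricted form is equivalent to $-\alpha u^2 - w^2$, which is anisotropic because $-1 \notin \bar\alpha$; therefore $S_v$ is compact. Every compact subgroup of $\PGL(2,\Q_p)$ lies in a maximal one, and the maximal compact subgroups are exactly the vertex stabilizers in $\T$, which gives the existence of a vertex $p$ with $S_v \subset \Stab(p)$. For uniqueness, if $S_v$ were contained in two distinct vertex stabilizers it would fix the whole geodesic between them, hence at least one edge. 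By Proposition \ref{prop:transit} we may reduce to $v = v_\alpha$ with $\alpha$ of valuation $0$, which is possible because $\bar\alpha$ has even valuation. Then $S_{v_\alpha}$ is the image in $\PGL(2,\Q_p)$ of the similitude group of the anisotropic binary form $\alpha X^2 + Y^2$; it lies inside $\PGL(2,\Z_p)$ and contains the order-two element $\sigma$ represented by $\left(\begin{smallmatrix} 0 & 1 \\ -\alpha & 0 \end{smallmatrix}\right)$. The $\sigma$-invariant $\Z_p$-lattices of $\Q_p^2$ are the $\Z_p[\sqrt{-\alpha}]$-submodules of $\Q_p[\sqrt{-\alpha}]$, and these form, up to homothety, a single class (the ring of integers of that quadratic extension). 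Hence $\sigma$ fixes exactly one vertex of $\T$, ruling out any second candidate $p$. Covariance then follows by uniqueness, since $\Stab(gv) = g S_v g^{-1} \subset g \Stab(\pi(v)) g^{-1} = \Stab(g \cdot \pi(v))$.

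\textbf{Quasi-isometry.} By covariance and the transitivity of $\PGL(2,\Q_p)$ on pairs ``point in a long line'' (Proposition \ref{prop:transit}), it suffices to compare the two distances for $v_\alpha$ and $v_1 = (\alpha x^2, 0, 1)$. The diagonal element $g = \left(\begin{smallmatrix} x & 0 \\ 0 & 1 \end{smallmatrix}\right)$ sends $v_\alpha$ to $v_1$ and moves the standard vertex of $\T$ to a vertex at tree distance $|v_p(x)|$. Proposition \ref{prop:calcdist} gives $d_{\D_{\bar\alpha}}(v_\alpha, v_1) = 2|v_p(x)| + 1$ as soon as $v_p(x) \neq 0$, while $d_{\D_{\bar\alpha}}(v_\alpha, v_1) \leq 1$ when $v_p(x) = 0$. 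Both sides are therefore linearly related at large scales, which is exactly the desired quasi-isometry. For short lines, Proposition \ref{prop:calcdist} bounds every Hilbert distance on them by $1$; hence each short line sits inside a single ultrametric neighbourhood $\mathcal U(\omega)$ and collapses to one vertex under $\pi_{\bar\alpha}$.

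\textbf{Bijection with geodesics.} A long line $L \subset \D_{\bar\alpha}$ has closure intersecting $\C_{\bar\alpha}$ in exactly two points, giving a canonical unordered pair in $\C_{\bar\alpha}$. Via the $\PGL(2,\Q_p)$-equivariant identification $\C_{\bar\alpha} \simeq \P_1(\Q_p) = \partial \T$ of Fact 4.4, this pair determines a unique geodesic $\gamma(L)$ of $\T$. The inclusion $\pi_{\bar\alpha}(L) \subset \gamma(L)$ is immediate by covariance: $L$ is the axis of a suitable diagonal torus $T \subset \PGL(2,\Q_p)$, so its points are $T$-translates of $v_\alpha$ and their images under $\pi_{\bar\alpha}$ are $T$-translates of $\pi_{\bar\alpha}(v_\alpha)$, which trace out precisely $\gamma(L)$. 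Conversely every geodesic of $\T$ is the image of the unique long line joining its two boundary points, and $\PGL(2,\Q_p)$ acts transitively on both sides, so $L \mapsto \gamma(L)$ is a $\PGL(2,\Q_p)$-equivariant bijection.

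\textbf{Main obstacle.} The delicate step is the uniqueness in the first paragraph: one must pin down an element of $S_{v_\alpha}$ whose fixed-point set in $\T$ reduces to a single vertex. The assumption that $\bar\alpha$ has even valuation is essential. A parallel computation for $\bar\alpha$ of odd valuation produces elements stabilizing an edge of $\T$ rather than a vertex, and one would then have to redefine $\pi_{\bar\alpha}$ as a map into the set of edges (equivalently, into the geometric realization of $\T$) instead of vertices.
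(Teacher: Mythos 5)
Your proposal is correct in substance but replaces the paper's key computation by a genuinely different, more conceptual argument, and the trade is worth recording. The paper proves the first (and hardest) assertion --- that $\Stab(v_\alpha)$ lies in a \emph{unique} maximal compact subgroup --- by a direct matrix computation: it writes out $h\Stab(v_\alpha)h^{-1}\subset\PGL(2,\Z_p)$ as a system of integrality conditions on the entries of $h$ and grinds out $h\in\PGL(2,\Z_p)$. You instead observe that $\Stab(v_\alpha)$ contains an element $\sigma$ with $\sigma^2=-\alpha\,\mathrm{Id}$, so that $\sigma$-stable lattices are exactly the $\Z_p[\sqrt{-\alpha}]$-modules; since $\alpha$ is a unit and $-\alpha$ a non-square, $\Q_p[\sqrt{-\alpha}]$ is the unramified quadratic extension, its integers form a discrete valuation ring, and there is a single homothety class of such lattices, hence a single fixed vertex. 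This is shorter, explains structurally why the hypothesis ``$\bar\alpha$ of even valuation'' matters (for odd valuation the extension is ramified and $\sigma$ fixes the midpoint of an edge, exactly the phenomenon the paper relegates to a remark), and avoids the ``tedious computation'' the author apologizes for. Two small cautions. First, your sentence ``the maximal compact subgroups are exactly the vertex stabilizers in $\T$'' is false for $\PGL(2,\Q_p)$: stabilizers of edges (allowing the flip) are maximal compact subgroups not contained in any vertex stabilizer, so existence of a fixed \emph{vertex} is not automatic from compactness alone; your argument survives because you separately assert $\Stab(v_\alpha)\subset\PGL(2,\Z_p)$ (which does require a small check of the explicit form of the stabilizer, as in the paper) and because $\sigma$ itself fixes a vertex. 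Second, the identity of the specific matrix $\left(\begin{smallmatrix}0&1\\-\alpha&0\end{smallmatrix}\right)$ as an element of $\Stab(v_\alpha)$ depends on the chosen identification of $\bk^3$ with trace-zero matrices; what matters, and what is true, is that the stabilizer contains the class of the trace-zero matrix representing $v_\alpha$ itself, whose square is $-\alpha\,\mathrm{Id}$. The quasi-isometry and geodesic-bijection parts of your argument coincide with the paper's (reduction to a long line via transitivity, the explicit distance formulas, and the boundary identification $\C_{\bar\alpha}\simeq\P_1(\Q_p)$); like the paper, you treat the collapse of points at distance at most $1$ to a single vertex somewhat briskly, but this is at the same level of detail as the original.
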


\begin{remark} Before going on with the proof, let us point out the similarity
with the case of the triangle (see section \ref{sec:genHilbdist}): in both
cases, the projection collapsing the ultrametric loci maps our
convex to a discretized version of their real counterpart. For the triangle, the
vertices of an hexagonal net reflected the hexagonal norm on the plane, and here
the vertices of a tree look very much like a discretized hyperbolic disc.
\end{remark}

\begin{proof}
I have no other proof of the first point than a direct computation: choose some $\alpha \in \bar\alpha$ of valuation $0$. The stabilizer $\Stab(v_\alpha)$ of $v_\alpha=(\alpha,0,1)$ in $\GL(2,\Q_p)$ is composed of the elements $g$ of the form: $$\begin{pmatrix} a & \alpha \epsilon c \\ c & \alpha a \end{pmatrix},$$ where $\epsilon=\det(g)=\pm 1$ and $a$ and $c$ are tied by the relation $c^2=\frac{1}{\alpha}(\epsilon-a^2)$.

As $-\alpha$ is not a square, it yields that $a\in \Z_p$ and then $g \in \PGL(2,\Z_p)$.

Now take an element of $h \in \PGL(2,\Q_p)$, represented by a matrix $$\begin{pmatrix} x & y \\ z & \alpha t \end{pmatrix}$$  in $\GL(2,\Q_p)$ of determinant $D$ verifying $|D|=1$ or $p$. The assumption $h\Stab(v_\alpha)h^{-1}\subset \PGL(2,\Q_p)$ gives us the system of conditions:
\begin{eqnarray}
a(tx - \epsilon yz)+c(ty+\alpha\epsilon yz) & \in & D\Z_p\\
axy(\epsilon - 1) -c(y^2+\alpha\epsilon x^2) & \in & D\Z_p\\
atz(-\epsilon + 1) +c(t^2+\alpha\epsilon z^2) & \in & D\Z_p\\
a(-zy+\epsilon tx) -c(ty+\alpha\epsilon zx) & \in & D\Z_p 
\end{eqnarray}
It should be verified for every $\epsilon=\pm 1$ and $a$, $c$ tied by the relation $c^2=\frac{1}{\alpha}(\epsilon-a^2)$.
We deduce that $x^2$, $y^2$, $z^2$, $t^2$, $tx$, $yz$, $ty$... belong to $D\Z_p$. So $|D|=1$ and $x$, $y$, $z$ and $t$ belong to $\Z_p$. We conclude that $h$ belongs to $\PGL(2,\Z_p)$.

This proves that the only maximal compact subgroup of $\PGL(2,\Q_p)$ containing $\Stab(v_\alpha)$ is $\PGL(2,\Z_p)$. As all the stabilizers are conjugated, the first point is proven.

\medskip

The covariance is clear. So it is enough to understand $\pi_{\bar\alpha}$ along a long line, using the transitivity of $\PGL(2,\Q_p)$. One easily identifies the projection along the long line between $(1,0,0)$ and $(0,0,1)$:
$$\pi_{\bar\alpha} (\alpha x^2,0,1) = \Z_p.(x,0)\oplus \Z_p.(0,1).$$

Hence if $v$ and $v'$ are at distance $d$ in $\D_{\bar\alpha}$, their projections $\pi_{\bar\alpha} (v)$ and $\pi_{\bar\alpha} (v)$ are at distance $E(\frac{d}{2})$. The projection is a quasi-isometry. It is nothing else than the collapse of the ultrametric loci in $\D_{\bar\alpha}$ to points in the tree. 

\medskip

The projection $\pi_{\bar\alpha}$ extends to the bijection $K_{\bar\alpha}(a^2,ab,b^2)\mapsto [a:b]$ from the isotropic cone $\C_{\bar\alpha}$ to $\P_1(\Q_p)$. As a long line (above) or a geodesic (in the tree) is uniquely defined by its ends in (respectively) $\C_{\bar\alpha}$ and $\P_1(\Q_p)$, the projection $\pi_{\bar\alpha}$ defines a covariant bijection between the long lines and the geodesics.
\end{proof}

\begin{remark}
 We won't be precise, but when the valuation of ${\bar\alpha}$ is odd and $-1$ is not a square, then each point in $\D_{\bar\alpha}$ has a well-defined projection to an \emph{edge} of $\T$. It explains why $\PSL(2,\Q_p)$ may act transitively on these $\D_{\bar\alpha}$.
\end{remark}

\section{The group of automorphisms}\label{sec:carac}

We show here that a transformation of $\D_{\bar\alpha}$ that preserves the multiplicative Hilbert distance is projective ; i.e. it  belongs to $\Isom(\D_{\bar\alpha})$. It does not hold under the weaker hypothesis of preserving the Hilbert distance because the latter lacks precision. For example, the Hilbert distance by itself does not allow to define long lines as ``geodesics'', whereas the multiplicative Hilbert distance does.

Throughout this section, $\bar\alpha$ is a fixed element in $\bk^*/(\bk^*)^2$, represented by some $\alpha\in\bk^*$.

\subsection{Multiplicative Hilbert distance}

Recall that we defined the notion of multiplicative Hilbert distance (definition \ref{def:multdist}). This notion will be enough to characterize in the following the action of $\PGL(2,\bk)$.
Of course the multiplicative Hilbert distance is invariant under the action of $\PGL(2,\bk)$.

The action of $\PGL(2,\bk)$ becomes transitive on the pair of points at equal multiplicative distance:
\begin{lemma}\label{lem:transit}
 For any $x$ in $\bk^*$, the action of $\PGL(2,\bk)$ on the set $\{(v,v') \in \D_{\bar\alpha}^2\textrm{ such that }D(v,v')=\{x,x^{-1}\}\}$ is transitive
\end{lemma}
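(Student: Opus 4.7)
The strategy is to reduce, via Proposition~\ref{prop:transit}, to a pair based at a standard flag, describe the finitely many candidate second points on the fixed long line, and exhibit an explicit element of $\PGL(2,\bk)$ that swaps them while stabilizing the flag.

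Since the multiplicative distance is defined only for pairs $(v,v')$ spanning a long line (Definition~\ref{def:multdist}), each pair in the set determines a unique long line $L$ through $v$. Choose as reference flag $v_\alpha=(\alpha,0,1)$ and $L_0$ the long line whose isotropic endpoints are $w_1=(1,0,0)$ and $w_2=(0,0,1)$. By the flag transitivity of Proposition~\ref{prop:transit}, any pair $(v,v')$ in the set can be moved to one of the form $(v_\alpha,u)$ with $u\in L_0\cap\D_{\bar\alpha}$ and $D(v_\alpha,u)=\{x,x^{-1}\}$. It thus suffices to show that the stabilizer of the flag $(v_\alpha,L_0)$ in $\PGL(2,\bk)$ acts transitively on this candidate set.

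To identify the candidate set, I would use Theorem~\ref{the:duality}: a point $(a,0,c)$ of the plane defining $L_0$ lies in $\H_{\bar\alpha}$ precisely when $a=B(w_2,\cdot)$, $c=B(w_1,\cdot)$, and $a+c=B(w_1+w_2,\cdot)$ all lie in $K_{\bar\alpha}$, while $Q(a,0,c)=ac\in\bar\alpha$. Since $K_{\bar\alpha}$ is the norm group of $\bk[\sqrt{-\alpha}]$, these conditions all amount, after rescaling by $c^{-1}\in K_{\bar\alpha}$, to the single parametrization $u=v_s:=(\alpha s^2,0,1)$ with $s\in\bk^*/\{\pm1\}$. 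A direct cross-ratio computation with $\phi_i=B(w_i,\cdot)$ gives $[\phi_1,\phi_2,v_\alpha,v_s]=s^{-2}$, so $D(v_\alpha,v_s)=\{s^2,s^{-2}\}$. The candidate set is therefore empty if $x\notin(\bk^*)^2$ (where the lemma holds vacuously) and equals $\{v_s,v_{1/s}\}$ for $s^2=x$ otherwise.

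The element $h\in\PGL(2,\bk)$ represented by $\begin{pmatrix}0 & 1\\\alpha & 0\end{pmatrix}$ closes the argument. A direct computation of the action $\det(h)\Ad(h)$ on $\bk^3$ gives $(x,y,z)\mapsto(\alpha^2 z,\alpha y,x)$. This swaps $w_1$ and $w_2$ up to the square $\alpha^2$ (hence preserves $L_0$), sends $v_\alpha$ to $\alpha\cdot v_\alpha$, and sends $v_s$ to $\alpha s^2\cdot v_{1/s}$. The main obstacle is purely bookkeeping: all three properties of $h$ are equalities in $\D_{\bar\alpha}=\H_{\bar\alpha}/K_{\bar\alpha}$ (not just in $\P_2(\bk)$), and they require the scalar factors $\alpha^2$, $\alpha$ and $\alpha s^2$ to lie in $K_{\bar\alpha}$. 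The only non-tautological one is $\alpha\in K_{\bar\alpha}$; this holds because $\alpha$ is the norm of $\sqrt{-\alpha}$ in $\bk[\sqrt{-\alpha}]$, which is exactly the defining property of $K_{\bar\alpha}$. Once this is granted, $h$ stabilizes the flag $(v_\alpha,L_0)$ and exchanges $v_s\leftrightarrow v_{1/s}$ in $\D_{\bar\alpha}$, giving the desired transitivity.
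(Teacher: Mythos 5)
Your proof is correct and follows essentially the same route as the paper: reduce by flag transitivity (Proposition~\ref{prop:transit}) to a pair $(v_\alpha,(\alpha s^2,0,1))$ on the standard long line, observe that the multiplicative distance pins down $s^2$ up to inversion, and exhibit an antidiagonal involution of $\PGL(2,\bk)$ fixing $v_\alpha$ and swapping the two endpoints of the line, hence the two candidates. You are in fact somewhat more careful than the paper, in handling the vacuous case where $x$ is not a square and in checking that the scalars $\alpha$ and $\alpha s^2$ lie in $K_{\bar\alpha}$ so that the relevant identities hold in $\D_{\bar\alpha}$ and not merely in $\P_2(\bk)$.
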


\begin{proof}
 Fix a pair $(v,v')$ of points such that $D(v,v')=\{x,x^{-1}\}$. The group $\PGL(2,\bk)$ acts transitively on the flags (i.e. a point in a long line) by lemma \ref{prop:transit}, hence it sends $v$ on $v_\alpha=K_{\bar\alpha}(\alpha,0,1)$, and $v'$ on a point of the form $K_{\bar\alpha}(\alpha y^2, 0 ,1)$. As $\PGL(2,\bk)$ preserves the multiplicative distance, we have either $y=x$ or $y=x^{-1}$. The action of $\begin{pmatrix} 0 &-x \\ \frac{1}{x}&0\end{pmatrix}$ fixes $v_\alpha$ and maps $K_{\bar\alpha}(\alpha x^2, 0 ,1)$ to $K_{\bar\alpha}(\alpha,0, x^2)=K_{\bar\alpha}(\alpha x^{-2}, 0 ,1)$.

So the action of some element in $\PGL(2,\bk)$ maps the points $v$ and $v'$ to $K_{\bar\alpha}(\alpha,0,1)$ and $K_{\bar\alpha}(\alpha x^2, 0 ,1)$, proving the transitivity.
\end{proof}

We define now the notion of automorphism of $\D_{\bar\alpha}$ by asking that it preserves the multiplicative Hilbert distance. Indeed it seems reasonable to consider that this multiplicative distance is a natural invariant for two points in $\D_{\bar\alpha}$ lying on a long line. So a transformation of $\D_{\bar\alpha}$ which preserves its convex structure should preserve the multiplicative Hilbert distance:
\begin{definition}
 Let $T$ be a transformation of $\D_{\bar\alpha}$. It is an automorphism of $\D_{\bar\alpha}$ if it preserves the multiplicative distance, i.e.:
\begin{itemize}
 \item if $v$ and $v'$ lie on a long line, so do $T(v)$ and $T(v')$.
 \item for any $v$ and $v'$ on a long line, we have $D(T(v),T(v'))=D(v,v')$.
\end{itemize}
\end{definition}

The next section shows that any automorphism is indeed given by an element of $\PGL(2,\bk)$.

\subsection{Every automorphism is projective}

\begin{theorem}\label{the:carac}
 Consider an automorphism  $T$ of $\D_{\bar\alpha}$.

Then $T$ is a projective transformation preserving the quadratic form $Q$, i.e. $T$ belongs to $\Isom(\D_{\bar\alpha})$.
\end{theorem}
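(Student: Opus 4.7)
Plan. The main tool is the homogeneity of $\D_{\bar\alpha}$: Proposition \ref{prop:transit} gives $\Isom(\D_{\bar\alpha})\simeq \PGL(2,\bk)$ acting transitively on flags ``a point in a long line'', and Lemma \ref{lem:transit} provides transitivity on pairs of points at prescribed multiplicative distance. The idea is to compose $T$ with projective isometries to kill its degrees of freedom, then show that what remains is forced to be the identity.

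First observe that $T$ sends long lines to long lines: given three collinear points $v_1,v_2,v_3$ on a long line $L$, the multiplicativity of $D$ (namely, $D(v_1,v_3)$ is determined by $D(v_1,v_2)$ and $D(v_2,v_3)$) forces $T(v_1),T(v_2),T(v_3)$ to lie on a common long line. Fix the reference flag $(v_\alpha,L_0)$ with $v_\alpha=K_{\bar\alpha}(\alpha,0,1)$; by transitivity on flags, I compose $T$ with an element of $\Isom(\D_{\bar\alpha})$ and assume $T(v_\alpha)=v_\alpha$ and $T(L_0)=L_0$. Next, pick any $v_1\in L_0\setminus\{v_\alpha\}$: by Lemma \ref{lem:transit} some projective isometry fixes $v_\alpha$ and sends $T(v_1)$ back to $v_1$, and it preserves the unique long line through these two points, namely $L_0$. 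After this further composition, $T$ fixes both $v_\alpha$ and $v_1$ on $L_0$. Parametrising $L_0$ by $x\mapsto v(x)=K_{\bar\alpha}(\alpha x^2,0,1)$ (so $v(x)=v(-x)$) and using $D(v(x),v(y))=\{(x/y)^2,(y/x)^2\}$, the constraint that $T$ preserves $D$ and fixes $v(1)$ and $v(x_0)$ for a generic $x_0$ forces $T|_{L_0}=\operatorname{id}$. In particular $T$ fixes both endpoints $a_0,b_0$ of the closure of $L_0$ in $\C_{\bar\alpha}$.

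It remains to show that an automorphism fixing $L_0$ pointwise must be the identity everywhere. For $w\in \D_{\bar\alpha}\setminus L_0$, the set of long lines through $w$ that meet $L_0$ is large: since $Q|_{w^\perp}$ is anisotropic and equivalent to $-\alpha u^2-v^2$, only a negligible set of directions through $w$ give short lines. Pick distinct $v_1,v_2\in L_0$ such that both $(v_1,w)$ and $(v_2,w)$ are long lines; then $T$ sends these to long lines through $v_i$ containing $T(w)$, with $D(v_i,T(w))=D(v_i,w)$. The locus of points at prescribed multiplicative distance from $v_i$ is a ``sphere'' cut out by the cross-ratio identity $D(v_i,z)=\{[a,b,v_i,z],[b,a,v_i,z]\}$ where $(a,b)$ are the isotropic endpoints of the long line $(v_i,z)$; using that $T$ fixes $a_0,b_0$ to constrain which long line through $v_i$ may carry $T(w)$, and intersecting the two sphere conditions from $v_1,v_2$, a direct quadratic-form computation yields $T(w)=w$.

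The delicate step is this final one. Without real-like connectedness to propagate the identity locally, one must verify the rigidity of the multiplicative-distance data from a few base points in $L_0$ by an explicit computation: in the real case, uniqueness of $T(w)$ follows from continuity and dimension count, whereas in the $p$-adic case the argument genuinely relies on the relation between multiplicative distance and projective cross-ratio with isotropic endpoints together with the bilinear-form structure of $Q$. The earlier paragraphs are essentially bookkeeping, using the already-established homogeneity together with an explicit manipulation on the one-dimensional object $L_0$; all the real work, and the likely technical subtleties (the $\pm$-ambiguity in the parametrisation of $L_0$, or the case $-1\in(\bk^*)^2$), concentrates in the concluding step.
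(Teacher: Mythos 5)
Your overall strategy (normalize $T$ by projective isometries until it fixes enough data, then argue rigidity) is the right one and matches the paper in spirit, but the reduction you actually propose is broken. You reduce to the claim that \emph{an automorphism fixing the long line $L_0$ pointwise is the identity}. That claim is false: the reflection $\mathrm{diag}(1,-1,1)$, i.e.\ $(a,b,c)\mapsto(a,-b,c)$ in coordinates where $L_0=\{(x,0,y)\}$, is a nontrivial element of $\Isom(\D_{\bar\alpha})$ that fixes $L_0$ pointwise and preserves all multiplicative distances. This is exactly why your concluding step cannot work as described: two circles centered at points $v_1,v_2\in L_0$ intersect in (up to) \emph{two} points, swapped by that reflection (this is the content of the paper's Lemma \ref{lem:2circles}), so ``intersecting the two sphere conditions from $v_1,v_2$'' pins down $T(w)$ only up to $\{w,\sigma(w)\}$ and no quadratic-form computation will do better. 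You must normalize $T$ to fix one further point \emph{off} $L_0$ (composing with the reflection if necessary), and then prove that three multiplicative distances to three non-collinear base points determine a point uniquely. That uniqueness is the genuine content of the theorem and is precisely what you defer to an unspecified ``direct computation''; the paper proves it (Lemma \ref{lem:3points}) by a non-obvious argument: if two points $v\neq v'$ had the same three distances, the involutions $t_{ij}$ swapping them would satisfy $t_{12}=t_{23}$ (a projective map fixing two distinct lines pointwise is a homothety), forcing the $v_i$ to be collinear, a contradiction.

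Two smaller problems. First, your assertion that $T$ ``fixes the endpoints $a_0,b_0$'' of $L_0$ in $\C_{\bar\alpha}$ is unjustified: $T$ is a map of $\D_{\bar\alpha}$ only, you have not shown it extends to the boundary (and you cannot invoke continuity), so the later use of this to ``constrain which long line through $v_i$ may carry $T(w)$'' has no basis. Second, the claim that only a ``negligible set'' of directions through $w$ give short lines is wrong: $Q|_{w^\perp}$ takes values in both square classes of $-K_{\bar\alpha}$, so short and long directions are each roughly half of all directions; what you actually need (and should verify) is merely the existence of two suitable points of $L_0$ joined to $w$ by long lines. The first normalization steps (fixing a flag, then $T|_{L_0}=\mathrm{id}$) are fine, but everything you flag as ``the delicate step'' is exactly the part that is missing, and the route you sketch for it leads to a false intermediate statement.
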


The proof is the same as in the real case: up to the action of $\Isom(\D_{\bar\alpha})$, we may assume that $T$ has a pointwise fixed long line and another fixed point. We prove that the multiplicative distance to these fixed points characterize any point in $\D_{\bar\alpha}$. We will need the notion of circle:
\begin{definition}
 Let $v$ be a point in $\D_{\bar\alpha}$ and $x\in \bk^*$. The circle centered
at $v$ and of multiplicative radius $\{r,r^{-1}\}$ is the set:
$$C(v,r)=\{v' \in \D_{\bar\alpha}\textrm{ such that }D(v,v')=\{r,r^{-1}\}\} .$$
\end{definition}

\begin{remark}
 The multiplicative Hilbert distance is defined for points on a long line. So for any $v'$ in the circle $C(v,r)$, the line $(vv')$ is a long one.
\end{remark}
We then study intersections of circles:
\begin{lemma}[Two circles intersect in at most two points]\label{lem:2circles}
Consider two distinct points $v$ and $v'$ in $\D_{\bar\alpha}$ on a long line. Fix $r$ and $r'$ in $\bk^*\setminus\{1\}$.

Then there are at most two points in the intersection between the circles $C(v,r)$ and $C(v',r')$. If there are effectively two points, there is an involution $g\in \Isom(\D_{\bar\alpha})$ which fixes $v$ and $v'$ and permutes these two points.
\end{lemma}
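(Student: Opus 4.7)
By Lemma \ref{lem:transit} the group $\Isom(\D_{\bar\alpha})$ acts transitively on pairs of points with a prescribed multiplicative distance, so up to the action of this group we may place
\[
v = v_\alpha = (\alpha,0,1), \qquad v' = (\alpha t^2, 0, 1)
\]
for some $t \in \bk^*$, so that both $v$ and $v'$ lie on the standard long line $L$ in the plane $\{y=0\}$ (with endpoints $(1,0,0)$ and $(0,0,1)$ in $\C_{\bar\alpha}$). My candidate involution is the map $\sigma\colon (a,b,c) \mapsto (a,-b,c)$: it is the projective transformation attached to $g = \mathrm{diag}(1,-1) \in \PGL(2,\bk)$ under the action $g \mapsto \det(g)\Ad(g)$ of Proposition \ref{prop:transit}, it preserves $Q$ and $\D_{\bar\alpha}$, it is an involution, and it fixes every point of $L$ (in particular $v$ and $v'$), since those points have $b=0$.

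A direct computation with the cross--ratio on a long line (the one already implicit in Proposition \ref{prop:calcdist}) shows that $D(v,w)=\{r,r^{-1}\}$ is equivalent to the single quadric equation
\[
B(v,w)^2 \;=\; k\,Q(v)\,Q(w), \qquad k := (r+1)^2/(4r),
\]
and similarly $C(v',r')$ is cut out by the same relation with $v,k$ replaced by $v',k'$. If $w \in C(v,r) \cap C(v',r')$ then dividing the two equations forces
\[
\bigl(B(v,w)/B(v',w)\bigr)^2 = k\,Q(v)/\bigl(k'\,Q(v')\bigr),
\]
a fixed element of $\bk^*$. If this element is not a square in $\bk$ the intersection is empty; otherwise, writing $\gamma$ for a square root, one has the two linear conditions $B(v,w) = \pm\gamma B(v',w)$, which in coordinates $w=(a,b,c)$ read $(\alpha c + a) = \pm\gamma(\alpha t^2 c + a)$ and thus impose $a = \mu_{\pm} c$ for two explicit scalars $\mu_\pm$. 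Substituting each linear relation into $B(v,w)^2 = k\,Q(v)\,Q(w)$ and simplifying produces an equation of the form $b^2 = \kappa_{\pm}\,c^2$ for a specific $\kappa_{\pm} \in \bk$; the $\bk$-solutions $(a,b,c)$ exist exactly when $\kappa_{\pm}$ is a nonzero square, in which case they form a $\sigma$-pair $(\mu_\pm c,\pm\sqrt{\kappa_\pm}c,c)$.

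The count is therefore at most $2$ points per sign $\epsilon \in \{+,-\}$, swapped by $\sigma$; to conclude at most two points in total I plan to check that at most one of the two hyperplanes $\{a=\mu_\epsilon c\}$ effectively contributes, i.e.\ that $\kappa_+ / \kappa_-$ is not a square in $\bk$. The expected route is a direct calculation using the explicit expressions of $\mu_\epsilon$ and $\kappa_\epsilon$ together with the standing hypothesis $-1 \notin \bar\alpha$: that hypothesis forces the square class of $\kappa_+/\kappa_-$ to be the class of $-1$ (modulo a square coming from $Q(v)Q(v')$, which is already a square since both values lie in $\bar\alpha$), and hence at most one $\kappa_\epsilon$ can be a square. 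When exactly one is, we obtain precisely two intersection points and they are exchanged by $\sigma \in \Isom(\D_{\bar\alpha})$, which fixes $v$ and $v'$, as required. The main obstacle is this final square--class computation; it is elementary but must be carried out carefully because one has to interface the choice of $\gamma$ (a square root in $\bk$) with the coset information built into $\bar\alpha$ and $K_{\bar\alpha}$.
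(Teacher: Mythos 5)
Your reduction to standard position, your choice of the involution $\sigma\colon (a,b,c)\mapsto(a,-b,c)$, and your description of the circles by the quadric relation $B(v,w)^2=\frac{(r+1)^2}{4r}Q(v)Q(w)$ are all correct, and up to that point your argument runs parallel to the paper's (which instead parametrizes each circle as an orbit of $\Stab(v)$ and equates coordinates). The genuine gap is exactly the step you defer to the end: the claim that at most one of the two lines $\{a=\mu_{\pm}c\}$ can meet the circles, via the assertion that $\kappa_+/\kappa_-$ lies in the square class of $-1$. Carrying out your own computation (with $\alpha=1$, $v=(\alpha,0,1)$, $v'=(\alpha t^2,0,1)$) gives, modulo squares,
$$\kappa_+\kappa_-\ \equiv\ (r'-rt^2)\,(rr't^2-1)\,(rr'-t^2)\,(r't^2-r),$$
and this is \emph{not} in general a nonsquare. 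Take $\bk=\Q_{11}$ (so $-1$ is not a square and $\bar\alpha=\bar 1$ is admissible), $v=(1,0,1)$, $v'=(4,0,1)$, $r=r'=4$: then $\kappa_+\kappa_-\equiv -21\equiv 1 \pmod{11}$ is a square, and indeed $\kappa_+=14/25$ and $\kappa_-=-54/25$ are \emph{both} squares in $\Q_{11}$ ($14\equiv 3$ and $-54\equiv 1$ are quadratic residues). The four resulting points $(2,\pm\sqrt{14}/5,1)$ and $(-2,\pm\sqrt{-54}/5,1)$ all satisfy $Q\in(\Q_{11}^*)^2$, all pass the duality test $B(\,\cdot\,,(s^2,s\tau,\tau^2))\in K_{\bar 1}$ (each form $s^2-2bs\tau+ac\,\tau^2$ completes to a sum of two squares), and a direct cross-ratio computation puts each of them at multiplicative distance $\{4,1/4\}$ from both $v$ and $v'$ along long lines. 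So your plan cannot be completed: no square-class identity will kill one of the two branches.

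You should be aware that this is not merely a defect of your route. The paper's own proof asserts that ``the equality of the first and the third coefficients gives a linear system \ldots which has at most one solution,'' but points of the $K_{\bar\alpha}$-sphere are only defined up to a scalar in $K_{\bar\alpha}$; once that scalar is reinstated as an unknown, the system acquires a second solution, which is precisely your $\mu_-$ branch. Your more honest bookkeeping of the sign ambiguity $B(v,w)=\pm\gamma B(v',w)$ has therefore surfaced a real problem with the lemma as stated rather than a fixable gap in your write-up: in the real case the ``$-$'' branch is excluded by positivity of $B$ on a hyperboloid sheet, but over $\Q_p$ the corresponding positivity ($B(v,w)\in K_{\bar\alpha}$ for $v,w\in\H_{\bar\alpha}$) holds only in the special case $-1\notin(\bk^*)^2$, $\bar\alpha=\bar 1$ treated in the proposition on duals, and even there one would still have to check it rules out the second branch. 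If you want to salvage the statement, that positivity argument --- not a square-class computation on $\kappa_{\pm}$ --- is the place to look, and it will at best work under additional hypotheses on $\bk$ and $\bar\alpha$.
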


\begin{proof}
 Once again, one may assume that we have $v=K_{\bar\alpha}(\alpha,0,1)$ and $v'=K_{\bar\alpha}(\alpha x^2,0,1)$ and the proof is a calculus. The idea is that both circles are curves of degree $2$, hence may intersect in at most two points. 
 
 The circle $C(v,r)$ is  the orbit of $K_{\bar\alpha}(\alpha r^2, 0, -1)$ under $\Stab(v)$. And one may similarly see the circle $C(v',r')$ as the orbit:
$$\begin{pmatrix} x^2&0&0\\0&x&0\\0&0&1\end{pmatrix} \Stab(v)\,.\,K_{\bar\alpha}(\alpha r'^2, 0, 1).$$ We have already described $\Stab(v)$:
$$\Stab(v)=\left\{\Ad\begin{pmatrix} a & -\alpha \epsilon c \\ c &\epsilon a\end{pmatrix}\textrm{ for }\epsilon=\pm 1\textrm{ and }\alpha c^2=\epsilon-a^2\right\}.$$ Note that in the previous description one may transform $(a,c)$ in $(-a,-c)$ without changing the element in $\Stab(v)$. Hence the elements of $C(v,r)$ have coordinates:
$$
\left(\alpha \epsilon a^2r^2+(1-\epsilon a^2) ; (r^2-1)\epsilon ac; (1-\epsilon a^2)r^2+\epsilon a^2\right),
$$
 where $\epsilon=\pm 1$ and $\alpha c^2=\epsilon-a^2$.
For $C(v',r')$ we get:
$$
\left(x^2(\alpha \epsilon'a'^2r'^2+(1-\epsilon'a'^2)) ; (r'^2-1)\epsilon'a'c'; ((1-\epsilon'a'^2)r'^2+\epsilon'a'^2)\right)$$
where $\epsilon'=\pm 1$ and $\alpha c'^2=\epsilon'-a'^2$.
As $v$ and $v'$ are distinct, we have $x\neq 1$. Hence the equality of the first and the third coefficients gives a linear system in the unknowns $\epsilon a^2$ and $\epsilon' a'^2$ which has at most one solution. Using the equality between the second coefficient, there are at most two solutions $(r,s,t)$ and $(r,-s,t)$. Hence these two solutions are mapped one onto the other by the matrix:
$$\begin{pmatrix} 1&0&0\\0&-1&0\\0&0&1\end{pmatrix} \in \Stab(v)\cap \Stab(v').$$
\end{proof}

We are now able to see that three multiplicative distances are enough to define a point in $\D_{\bar\alpha}$:
\begin{lemma}[A point is defined by three multiplicative distances]\label{lem:3points}
Let $v_1$, $v_2$ and $v_3$ be three distinct points in $\D_{\bar\alpha}$, any two of them lying on a long line, but the three of them not lying on the same long line.

Then, for any $r_1$, $r_2$ and $r_3$ in $\bk^*$ there is at most one point in $\D_{\bar\alpha}$ at multiplicative distance $\{r_i,r_i^{-1}\}$ of the point $v_i$ for $i=1$, $2$ and $3$.
\end{lemma}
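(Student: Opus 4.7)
The plan is by contradiction. Assume there exist two distinct points $w, w' \in \D_{\bar\alpha}$ both realising the prescribed multiplicative distances from $v_1, v_2, v_3$. Then $\{w, w'\}$ sits inside each intersection $C(v_i, r_i) \cap C(v_j, r_j)$; as Lemma~\ref{lem:2circles} bounds this intersection by two, equality must hold, and the lemma supplies for each pair an involution $\sigma_{ij} \in \Isom(\D_{\bar\alpha})$ that swaps $w \leftrightarrow w'$ and fixes $v_i, v_j$. Reading off the explicit construction in the proof of Lemma~\ref{lem:2circles}, $\sigma_{ij}$ is in fact the reflection whose pointwise fixed set is the long line $L_{ij}$.

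For each pair of these reflections sharing a common centre one forms the composition; take for instance $g := \sigma_{12}\sigma_{13}$, which fixes $v_1$ (common to both factors) along with $w$ and $w'$. As a linear map on $\bk^3$, $g$ is a product of two reflections whose fixed planes meet along $\langle v_1\rangle$, so $g$ fixes this line pointwise and restricts to $v_1^\perp$ as an element of $\SO(Q|_{v_1^\perp})$. Since $-1\not\in\bar\alpha$, the restricted form is anisotropic, and the corresponding special orthogonal group is compact with only $\pm 1$ as its $\bk$-rational eigenvalues. Writing $w = a\,v_1 + b$ with $b\in v_1^\perp$, the requirement that $g$ fix $w$ modulo $K_{\bar\alpha}$ leaves only two alternatives: either (a) $g$ acts trivially on $v_1^\perp$ and is the identity as a linear map, in which case $\sigma_{12}=\sigma_{13}$, hence $L_{12}=L_{13}$, forcing $v_1,v_2,v_3$ collinear and contradicting the hypothesis; or (b) $w \in v_1^\perp$, the restriction $g|_{v_1^\perp}$ is $-\mathrm{id}$, and $-1 \in K_{\bar\alpha}$; the same reasoning applied to $w'$ then forces $w' \in v_1^\perp$ as well.

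Running the same dichotomy on the two other pairs $(\sigma_{12},\sigma_{23})$ and $(\sigma_{13},\sigma_{23})$ (whose common centres are $v_2$ and $v_3$), either one of them triggers alternative (a) and closes the contradiction, or alternative (b) holds for all three pairs, putting $w$ in $v_1^\perp\cap v_2^\perp\cap v_3^\perp$; but $v_1,v_2,v_3$ are linearly independent in $\bk^3$ (their projectivisations being non-collinear), so this intersection is $\{0\}$, giving $w=0$, which contradicts $w\in\D_{\bar\alpha}$. The main technical obstacle is the dichotomy itself: one must carefully distinguish equality of $g$ as a linear map in $\GL(3,\bk)$ from equality as a coset in $\GL(3,\bk)/K_{\bar\alpha}$, and use the anisotropy of $Q|_{v_i^\perp}$ to restrict the eigenvalues of $g$ on that plane. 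Once this structural input is secured, combining the information across the three pairs of reflections yields the required contradiction, and hence $w=w'$.
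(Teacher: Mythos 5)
Your proof is correct, but it follows a genuinely different route from the paper's. The paper also starts from the involutions of Lemma~\ref{lem:2circles}, but it composes two of them sharing the point $v_2$ chosen \emph{off} the line $(vv')$: the composition $t_{12}t_{23}$ then fixes the three non-collinear points $v$, $v'$, $v_2$, is argued to be a homothety, and the resulting equality $t_{12}=t_{23}$ forces $(v_1v_2)=(v_2v_3)$, contradicting the hypothesis. You instead compose the two reflections sharing each centre $v_i$, observe that the product fixes $v_i$ as a vector and restricts to $\SO(Q|_{v_i^\perp})$, and use the anisotropy of $Q|_{v_i^\perp}$ (the norm-one torus of $\bk[\sqrt{-\alpha}]$ has no $\bk$-rational eigenvalues other than $\pm 1$) to reduce to the dichotomy $g=\mathrm{id}$ versus $g|_{v_i^\perp}=-\mathrm{id}$. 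The first branch gives the paper's collinearity contradiction; the second, which requires $-1\in K_{\bar\alpha}$, you kill by intersecting $v_1^\perp\cap v_2^\perp\cap v_3^\perp=\{0\}$. What your approach buys is an explicit treatment of exactly the degenerate eigenvalue configuration that the paper's one-line step ``hence it fixes pointwise the two lines $(vv')$ and $(vv_2)$'' passes over silently (an element of $\O(Q)$ fixing three non-collinear points could a priori act by $-1$ on a plane); the price is a longer case analysis and a reliance on the explicit reflection form of the involution extracted from the \emph{proof}, not just the statement, of Lemma~\ref{lem:2circles} --- which is legitimate, since the paper uses the same explicit matrix elsewhere. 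Two small points you should add for completeness: first, reduce to the case $r_i\neq 1$ for all $i$ (if some $r_i=1$ the conclusion is immediate, and Lemma~\ref{lem:2circles} is only stated for radii in $\bk^*\setminus\{1\}$); second, note in your dichotomy that the subcase $b=0$, i.e.\ $w=v_1$, is excluded precisely by this reduction.
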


\begin{proof}
 Fix the $v_i$'s and $r_i$'s. Suppose by contradiction that two points $v\neq v'$ in $\D_{\bar\alpha}$ are at multiplicative distances $\{r_i,r_i^{-1}\}$ of the point $v_i$ for $i=1$, $2$ and $3$. One may assume that all the $r_i$ are different of $1$, because if $r_i=1$, the only possible solution is $v_i$.

One of the $v_i$, say $v_2$, does not belong to the line $(vv')$, because the $v_i$'s do not belong to the same line.
Using the previous lemma, for any $i\neq j$ there is an involution $t_{ij}$ in $\Isom(\D_{\bar\alpha})$ fixing $v_i$ and $v_j$ such that $t_{ij}(v)=v'$. Hence $t_{12}t_{23}$ fixes the three distinct points $v$, $v'$ and $v_2$ which are not on the same line. Hence $t_{12}t_{23}$ is a projective transformation of $\bk^3$ which fixes pointwise the two distinct projective lines $(vv')$ and $(vv_2)$. Hence it is an homothety. We get $t_{12}=t_{23}$.

It implies that the lines $(v_1v_2)$ and  $(v_2v_3)$ are the same, which contradicts the assumption that the $v_i$'s do not belong to the same line.
\end{proof}

The proof of the theorem follows:
\begin{proof}[Proof of theorem \ref{the:carac}]
Consider an automorphism $T$. Consider $3$ points $v_1$, $v_2$ and $v_3$, two of them defining a long line, but the three of them not belonging to the same line.

 The group $\Isom(\D_{\bar\alpha})$ acts transitively on the couple of points at same multiplicative distance (lemma \ref{lem:transit}). So there is an element $g_1$ in $\Isom(\D_{\bar\alpha})$ such that $g_1T$ fixes $v_1$ and $v_2$. As $g_1T$ is still an automorphism, it sends $v_3$ to one of the at most two points at distance $D(v_1,v_3)$ of $v_1$ and $D(v_2,v_3)$ of $v_2$. One then may choose, using lemma \ref{lem:2circles}, some $g_2$ in  $\Isom(\D_{\bar\alpha})$ such that $g_2g_1T$ fixes $v_1$, $v_2$, and $v_3$. By the lemma \ref{lem:3points}, $g_2g_1T$ is the identity.

It proves that $T$ is a projective map, i.e. it belongs to $\Isom(\D_{\bar\alpha})$.
\end{proof}

\section*{Conclusion}

It seems to the author that the construction described along this paper raises numerous questions. The notions of $p$-adic convexity and of Hilbert distance may not be suitably defined her ; but it would at least be interesting to test it on other examples and to see how rich $p$-adic Hilbert geometries are.
As for the hyperbolic discs, their existence gives a geometric object whose transformation group is $\PGL(2,\bk)$. They also deserve further studies. One may wonder if some problems or applications around the hyperbolic disc may be given a $p$-adic analogue. Here are three questions that seem worth exploring to me.

First of all, as mentioned in the introduction, the automorphisms of the tree form a huge group. We proved that if such an automorphism comes from an automorphism of the discs, then it acts as an element of $\PGL(2,\bk)$. But the geometry of the discs gives us a lot of invariants. For example, we get a notion of pencil of geodesics: a pencil of geodesics in the tree is (the projection of) a set of lines passing through a point in the disc above. What does  the group of automorphisms of the tree which map pencils to pencils looks like? And there are other notions to study, e.g. orthogonality of long lines.

Secondly, one is tempted to see a lattice in $\PGL(2,\bk)$ as the fundamental group of the quotient of hyperbolic discs by its action. Can we describe the $p$-adic surfaces that one obtains by this construction? In other words, what are the surfaces \emph{uniformized} by the $p$-adic hyperbolic discs ?

Eventually, and that was the starting point of this work, the notion of convexity in the real projective plane (and projective spaces of higher dimension) is the very starting point of the theory of divisible convex sets and relates with the Hitchin component of representations of a surface group (see \cite{benoist} for a survey on divisible convex sets). Though we lack the mere notion of connectedness, are we able to develop an analogue of some parts of these beautiful theories ?

\section*{Annex : Proof of fact \ref{fact:Iwazawa}}

Recall the setting of our Iwazawa
decomposition of the group $\SO(Q)$. Let $Q'$ be the quadratic form
$x_1^2+\ldots +x_{n-1}^2$. Consider the three following subgroups of
$\SL(n+1,\bk)$:
\begin{itemize}
 \item $N^+=\begin{pmatrix} 1 & 2 {}^t w A & Q'(w) \\ 0 & A & w \\ 0 & 0 & 1
\end{pmatrix}$ for $A \in \SO(Q')$ and $w\in \bk^{n-1}$.
 \item $N^-=\begin{pmatrix} 1 & 0 & 0 \\ v & B & 0 \\ Q'(v) &2{}^t v B & 1
\end{pmatrix}$ for $B \in \SO(Q')$ and $v\in \bk^{n-1}$.
 \item $H=\begin{pmatrix} x & 0 & 0 \\ 0 & Id & 0 \\ 0&0&\frac{1}{x}
\end{pmatrix}$ ($t\in\bk^*$).
\end{itemize}
The three following facts may be proven with geometric considerations in the
real case. But elementary linear algebra leads to the same conclusion and works
on any field.
\begin{fact}
\begin{itemize}
\item All three are subgroups of $\SO(Q)$ and $H$ normalizes both $N^+$ and
$N^-$.
\item The subgroup $N^+$ is the stabilizer of $v_0=\begin{pmatrix} 1\\ 0\\
\vdots \\ 0\end{pmatrix}$ in $\SO(Q)$.
\item The group $\SO(Q)$ decomposes as the product $N^- H N^+$.
\end{itemize}
\end{fact}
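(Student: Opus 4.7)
The three assertions split naturally into: (1) verifying that each of $N^+$, $N^-$, $H$ is a subgroup of $\SO(Q)$ and that $H$ normalizes $N^\pm$; (2) identifying $N^+$ with $\Stab(v_0)$; (3) the decomposition $\SO(Q) = N^- H N^+$ itself. All three are matters of direct block-matrix algebra, with (2) providing the real leverage for (3).

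For (1), I would fix a generic $n \in N^+$ parametrised by $(A, w)$ and verify $Q(nv) = Q(v)$ by expanding on $v=(x_0,y,x_n)^T$; the cross-terms cancel using ${}^t A A = I$ and $Q'(Ay) = Q'(y)$, both consequences of $A \in \SO(Q')$. Closure under multiplication follows from the block identity
$$n^+(A_1,w_1)\,n^+(A_2,w_2) = n^+\!\left(A_1 A_2,\; A_1 w_2 + w_1\right),$$
and one checks the inverse has parameters $(A^{-1}, -A^{-1}w)$. The calculations for $N^-$ are symmetric, and $H$ is a one-parameter diagonal group. Normalization follows by a direct conjugation: $d_x\, n^+(A,w)\, d_x^{-1} = n^+(A, xw)$, and analogously for $N^-$.

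For (2), the inclusion $N^+ \subset \Stab(v_0)$ is immediate since the first column of any element of $N^+$ is $v_0$. Conversely, take $M \in \Stab(v_0)$ and write it in $(1,n-1,1)$-block form. The relations $B(Me_i, Me_j) = B(e_i,e_j)$ pin down the matrix step by step: $B(v_0, Me_j)=0$ for $1\le j\le n-1$ kills the middle entries of the last row; $B(v_0, Me_n)=1/2$ forces the bottom-right entry to $1$; the orthogonality among middle columns combined with $\det M = 1$ puts the middle block in $\SO(Q')$; finally $Q(Me_n)=0$ and $B(Me_j, Me_n)=0$ express the top row's middle part as $2\,{}^tf A$ and the top-right entry as $Q'(f)$. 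The matrix is then in the stated form of $N^+$.

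For (3), the plan is: given $M \in \SO(Q)$, write $M v_0 = (p,q,r)^T$ (isotropic, so $pr = Q'(q)$). When $p \neq 0$, set $h = d_p$ and let $n^- \in N^-$ have parameters $(q/p, I)$; a short computation gives $n^- h v_0 = (p,q,r)^T = M v_0$. Then $n^+ := h^{-1}(n^-)^{-1} M$ stabilises $v_0$, hence by (2) lies in $N^+$, yielding $M = n^- h n^+$. The main obstacle is the residual case $p = 0$, which sits outside the ``big cell''. Here one must either enlarge the assertion by a Weyl involution $\sigma \in \SO(Q)$ exchanging $v_0$ and $e_n$ (decomposing $M\sigma$ and absorbing $\sigma$), or restrict $M$ implicitly to matrices with non-vanishing top coordinate of $Mv_0$. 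For the subsequent application in Proposition \ref{prop:isotropic} the weaker statement $\SO(Q) = \Stab(\mathcal C_{\bar 1}) \cdot H$ is what matters, and one checks that $\sigma$ lies in $\Stab(\mathcal C_{\bar 1})$ (it permutes semi-cones and fixes $\mathcal C_{\bar 1}$), so the quotient passed to in the proposition's proof is unaffected.
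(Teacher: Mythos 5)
Your argument follows the same route as the paper's: the first two bullets are direct block-matrix verifications (the paper identifies $\Stab(v_0)$ by first noting that it must preserve $v_0^\perp=\{x_n=0\}$, hence be block upper-triangular, and then solving ${}^tMPM=P$; your bilinear-form bookkeeping is the same computation in a different order), and for the third bullet both you and the paper show that $N^-H$ carries $v_0$ onto isotropic vectors and then invoke $\Stab(v_0)=N^+$. The one place you diverge is also the most valuable part of your write-up: the residual case where the first coordinate of $Mv_0$ vanishes. The paper's proof silently skips it --- the matrix $h$ chosen there is $\mathrm{diag}(x_0,\mathrm{Id},x_0^{-1})$, which only makes sense when $x_0\neq 0$ --- and in fact the third bullet is false as literally stated: $N^-HN^+v_0$ is exactly the set of isotropic vectors with nonvanishing first coordinate, so any $M\in\SO(Q)$ with $Mv_0=(0,\ldots,0,1)$ (such $M$ exist by Witt's theorem) lies outside $N^-HN^+$. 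Your two remedies are both viable, and the second is the one that matters downstream: a Weyl-type element $\sigma$ exchanging the first and last coordinates lies in $\SO(Q)$ (take the middle block in $O(Q')\setminus\SO(Q')$ so that $\det\sigma=1$) and visibly preserves every semi-cone $\C_{\bar\beta}$, so one still gets $\SO(Q)=\Stab(\C_{\bar 1})\cdot H$ and the coset count in the proof of Proposition \ref{prop:isotropic} is unaffected. So your proposal is not only faithful to the paper's method but repairs a genuine gap in it; the only thing left to make it fully self-contained would be to write down $\sigma$ explicitly and check $\det\sigma=1$ and $\sigma\in\Stab(\C_{\bar 1})$, which is a one-line computation.
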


\begin{proof}
Let $P$ be the matrix representing the quadratic form $2Q$ in the standard basis
of $\bk^{n+1}$: $$P=\begin{pmatrix} 0 & 0 & 1\\ 0& -2Id & 0\\ 1
&0&0\end{pmatrix},$$ such that $Q(v)=\frac{1}{2} {}^t v P v$ for any $v \in
\bk^{n+1}$. The group $\SO(Q)$ is the group of matrices $M$ in $\SL(n+1,k)$
verifying ${}^tM P M=P$. 

It is clear that $H$ preserves $Q$ and normalizes $N^-$ and $N^+$.

Let us prove that $N^+$ is the stabilizer of $v_0$ in $\SO(Q)$ (it will also
prove it is indeed a subgroup of $\SO(Q)$ !): a matrix in $\SO(Q)$ stabilizing
the vector $v_0$ also stabilizes its orthogonal. The latter is the subspace
$\bk^n\times\{0\}$ of vectors having the last coordinate equals to $0$. Hence a
matrix $M$ stabilizing $v_0$ has a first column $=\begin{pmatrix} 1\\ 0\\ \vdots
\\0\end{pmatrix}$ and the last line $(0,\ldots,0,x)$ for some $x\in \bk$. In
other words, one may write: $$M=\begin{pmatrix} 1 & {}^t v & y \\ 0 & A & w \\ 0
& 0 & x \end{pmatrix}$$ where $A$ is a $(n-1)\times (n-1)$-matrix, and $v$, $w$
belong to $\bk^{n-1}$. A straightforward computation gives:
$${}^t M P M = \begin{pmatrix} 0 & 0 & x \\ 0 & -2{}^tAA & xv -2{}^t Aw \\ x &
{}^t(xv-2{}^tAw & 2xy - 2{}^tw w \end{pmatrix}.$$
The equality ${}^t M P M = P$ leads to, successively, $x=1$, $v=2{}^t Aw$, ${}^t
A A=Id$, $y={}^tw w$. The last two may be translated into: $A \in \SO(Q')$ and
$y=Q'(w)$, which proves the fact.
One may prove along the same lines that $N^-$ is the stabilizer of the vector
$\begin{pmatrix} 0\\ \vdots\\ 0 \\ 1\end{pmatrix}$.

For the third claim, one shows that the product $N^- H=H N^-$ send $v_0$ to any
isotropic vector $v=\begin{pmatrix}x_0 \\ \vdots\\ x_n\end{pmatrix}\in \mathcal
C\setminus\{ 0\}$. Let $v'=\begin{pmatrix}x_1 \\ \vdots\\ x_{n-1}\end{pmatrix}$
and define the following matrix of $N^-$: $$n^-=\begin{pmatrix} 1 & 0 & 0 \\ v'
& Id & 0 \\ Q'(v') &2{}^t v' & 1 \end{pmatrix}.$$
Then $n^- (v_0)= \begin{pmatrix} 1 \\v'\\ Q'(v')\end{pmatrix}$. Now $v$ is
isotropic, i.e. $Q(v)=x_0x_n-Q'(v')=0$, so $x_0x_n=Q'(v')$. It remains to
consider the following matrix of $H$: 
$$h=\begin{pmatrix} x_0 & 0 & 0 \\ 0 & Id & 0 \\ 0&0&\frac{1}{x_0}
\end{pmatrix}.$$ It verifies $hn^- (v_0)=v$.

We may conclude: let $g$ be an element of $\SO(Q)$ and choose two elements $n^-$
and $h$ such that $n^- h (v_0)=g(v_0)$. Then $(n^- h)^{-1} g$ fixes $v_0$. So it
belongs to $N^+$ and $g$ belongs to $n^- h N^+$.
\end{proof}

\bibliographystyle{amsalpha}
\bibliography{yapahp}

\end{document}